\documentclass[leqno,11pt]{amsart}
\usepackage{amsmath}
\usepackage{amssymb}
\usepackage{amsthm}
\usepackage{mathtools}
\usepackage{thmtools}
\usepackage{hhline}
\usepackage{tikz-cd}
\usepackage{enumitem}
\setlist[enumerate]{label={\normalfont (\roman*)}, itemsep=1ex}

\usepackage[utf8]{inputenc}
\usepackage[T1]{fontenc}
\usepackage{lmodern}
\usepackage[babel]{microtype}
\usepackage[english]{babel}
\usepackage{relsize}
\usepackage{calc}
\usepackage{multicol}

\usepackage{svg}

\usepackage{graphicx}
\usepackage{subcaption}

\counterwithin{figure}{section}

\usepackage{geometry}
\usepackage{xcolor} 
\colorlet{darkishRed}{red!60!black}
\colorlet{darkishBlue}{blue!60!black}
\colorlet{darkishGreen}{green!60!black}
\colorlet{darkblue}{blue!70!black}
\colorlet{darkishViolet}{violet}
\usepackage[linktoc=all]{hyperref}
\hypersetup{
	colorlinks,
    linkcolor={darkishBlue},
	citecolor={darkishGreen},
	urlcolor={darkishBlue}
}
\usepackage[nameinlink, capitalise, noabbrev]{cleveref}
\crefformat{enumi}{#2#1#3}
\crefformat{equation}{#2(#1)#3}
\crefname{equation}{}{}
\crefname{mainresult}{Theorem}{Theorems}
\let\setminus=\smallsetminus

\newcommand{\se}{\subseteq}
\newcommand{\sm}{\setminus}

\renewcommand{\leq}{\leqslant}
\renewcommand{\geq}{\geqslant}
\renewcommand{\ge}{\geq}
\renewcommand{\le}{\leq}

\DeclareMathOperator{\Aut}{Aut}

\newtheorem{theorem}{Theorem}[section] 
\newtheorem{proposition}[theorem]{Proposition}
\newtheorem{corollary}[theorem]{Corollary}
\newtheorem{lemma}[theorem]{Lemma}
\newtheorem{keylemma}[theorem]{Key Lemma}

\crefname{corlemma}{Correspondence}{Correspondences}

\crefname{liftlemma}{Lift}{Lifts}

\crefname{projlemma}{Projection}{Projections}

\newtheorem{observation}[theorem]{Observation}

\newtheorem{problem}[theorem]{Problem}

\newtheorem{mainresult}{Theorem} 

\newtheorem{maincorollary}[mainresult]{Corollary}

\theoremstyle{definition}
\newtheorem{example}[theorem]{Example}

\newtheorem{definition}[theorem]{Definition}

\theoremstyle{remark}

\newtheorem*{acknowledgment}{Acknowledgement}
\newtheorem*{claim*}{Claim}

\crefname{claim}{Claim}{Claims}
\crefname{enumi}{}{}
\AtEndEnvironment{proof}{\setcounter{claim}{0}}

\usepackage{etoolbox}

\newcommand{\COMMENT}[1]{{}}
\definecolor{cMaroon}{HTML}{93152a}

\newcommand{\defn}[1]{{\color{darkishRed}{\emph{#1}}}}
\newcommand{\casen}[1]{{\color{darkishViolet}{\emph{#1}}}}

\let\rho=\varrho
\let\phi=\varphi
\def\Z{\mathbb Z}

\makeatletter

\def\calCommandfactory#1{%
  \expandafter\def\csname c#1\endcsname{\mathcal{#1}}}
\def\frakCommandfactory#1{%
  \expandafter\def\csname frak#1\endcsname{\mathfrak{#1}}}
   
\newcounter{ctr}
\loop
  \stepcounter{ctr}
  \edef\X{\@Alph\c@ctr}
  \expandafter\calCommandfactory\X
  \expandafter\frakCommandfactory\X
\ifnum\thectr<26
\repeat

\makeatletter
\patchcmd{\@tocline}
  {\hfil}
  {\leaders\hbox{\,.\,}\hfil}
  {}{}
\makeatother

\newcounter{mylabelcounter}

\makeatletter
\newcommand{\labelText}[2]{%
#1\refstepcounter{mylabelcounter}%
\immediate\write\@auxout{%
  \string\newlabel{#2}{{1}{\thepage}{{\unexpanded{#1}}}{mylabelcounter.\number\value{mylabelcounter}}{}}%
}%
}

\newlist{defenum}{enumerate}{1}
\setlist[defenum]{label=(\upshape\thedefinition.\arabic*)}

\title[A characterisation of all vertex-transitive finite graphs of connectivity < 5]%
{A characterisation of all vertex-transitive finite graphs\\of connectivity < 5}

\author[Jan Kurkofka]{Jan Kurkofka${}^{\includegraphics[height=.7\baselineskip]{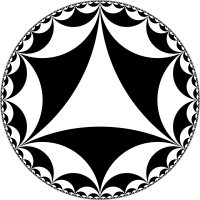}}$}
\author[Tim Planken]{Tim Planken$^\dagger$}

\thanks{$\includegraphics[height=.7\baselineskip]{Figures/MiniFarey.png}$ funded by the Deutsche Forschungsgemeinschaft (DFG, German Research Foundation) -- 566118291; 546892829}

\thanks{${}^\dagger$ funded by the Deutsche Forschungsgemeinschaft (DFG, German Research Foundation) -- 546892829}

\keywords{characterisation, vertex-transitive graphs, low connectivity, 4-connected, Tutte decomposition}

\subjclass[2020]{05C40, 05C75, 05C83, 05E18}

\begin{document}
\thispagestyle{empty}

\begin{abstract}
    We characterise all vertex-transitive finite connected graphs
    as essentially 5-connected
    or on a short list of explicit graph-classes.
    Our proof heavily uses Tutte-type canonical decompositions.
\end{abstract}
\topskip0pt
\vspace*{\fill}
\maketitle

\vspace*{\fill}

\thispagestyle{empty}

\newpage
\setcounter{page}{1}

\section{Introduction}

We aim to solve the following problem for graphs of low connectivity:

\begin{problem}\label{IntroProblem}
    Explicitly characterise the vertex-transitive finite graphs.
\end{problem}

Droms, Servatius and Servatius~\cite{infiniteSPQR} solved \cref{IntroProblem} for graphs of connectivity~$< 3$.
Two years ago, Carmesin and Kurkofka~\cite{Tridecomp,TriAiC} extended their characterisation to connectivity~3, as follows.
A~graph~$G$ is commonly said to be \emph{arc-transitive} if for every two ordered pairs $(u_i,v_i)$ with $u_i v_i\in E(G)$ for $i=1,2$, there is an automorphism $\varphi$ of $G$ such that $\varphi(u_1)=u_2$ and $\varphi(v_1)=v_2$.
Grohe~\cite{grohe2016quasi} defines a graph $G$ to be \emph{quasi-4-connected} if it is 3-connected and every 3-separation $(A,B)$ of $G$ satisfies $|A\sm B|=1$ or $|B\sm A|=1$.
Let $H$ be an $r$-vertex graph and let $G$ be an $r$-regular graph.
We define an \emph{$H$-expansion} of $G$ to be a graph obtained from $G$ by replacing the vertices $v$ of $G$ with pairwise disjoint copies $H_v$ of~$H$ and by replacing every edge $e=uv$ of $G$ with an $H_u$--$H_v$ edge $\hat e$ so that $\hat e_1$ and $\hat e_2$ share no ends for every two $e_1\neq e_2\in E(G)$.
Thus, the edges $\hat e$ form a perfect matching of the $H$-expansion.

\begin{theorem}\label{intro:triVxCon}\cite[Cor.~2]{Tridecomp}\footnote{We slightly strengthen \cite[Cor.~2]{Tridecomp} in the present paper to obtain \cref{intro:triVxCon}. The use of $H$-expansions is new.}
    A finite connected graph $G$ is vertex-transitive if and only if $G$ is either
\begin{itemize}
    \item quasi-4-connected and vertex-transitive,
    \item the $K_3$-expansion of a quasi-4-connected 3-regular arc-transitive graph,
    \item a cycle, $K_2$ or $K_1$.
\end{itemize}
\end{theorem}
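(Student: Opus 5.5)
The ``if'' direction is a direct check. Cycles, $K_2$ and $K_1$ are vertex-transitive, and so is the first class by assumption. For the $K_3$-expansion $G'$ of a $3$-regular arc-transitive graph $G$, I will identify each vertex of $G'$ with an ordered pair $(v,e)$, where $e$ is an edge of $G$ at $v$. Under this identification, $\Aut(G)$ acts on $G'$ by automorphisms, and it acts transitively on these pairs precisely because $G$ is arc-transitive. I also need that the $K_3$-expansion is unique up to isomorphism, which holds since all matchings of the three edges at $v$ to the three vertices of the triangle $K_3$ are equivalent.

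For the ``only if'' direction, let $G$ be vertex-transitive, finite and connected. I will first settle connectivity $<3$ using the result of Droms, Servatius and Servatius~\cite{infiniteSPQR}. A vertex-transitive graph with a cutvertex has every vertex a cutvertex, which is impossible in a finite graph because the leaf blocks of the block tree contain non-cutvertices. So $G$ is $2$-connected or is $K_1$ or $K_2$. If $G$ is $2$-connected but not $3$-connected, its Tutte decomposition into $3$-connected torsos, cycles and bonds is canonical, so $\Aut(G)$ acts on the decomposition tree. A central node or edge of this tree is then fixed, and transitivity forces every vertex to lie in it. I expect the outcome to be that $G$ is a cycle.

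So assume $G$ is $3$-connected but not quasi-4-connected. Here I will invoke the canonical decomposition of Carmesin and Kurkofka~\cite{Tridecomp}, i.e.\ the result behind \cite[Cor.~2]{Tridecomp}. It is an $\Aut(G)$-invariant tree-decomposition of adhesion $3$ whose torsos are quasi-4-connected, or wheels, or thickened $K_{3,m}$'s, or small. The cited corollary already gives: either $G$ is quasi-4-connected, or every vertex lies in exactly one small bag forming a triangle, and contracting these triangles yields a $3$-regular graph $H$. What remains to extract is:
\begin{itemize}
\item the triangles form a perfect $K_3$-tiling joined by a perfect matching, which says exactly that $G$ is the $K_3$-expansion of $H$;
\item $H$ is quasi-4-connected;
\item $H$ is arc-transitive.
\end{itemize}
Arc-transitivity should follow because vertices of $G$ correspond bijectively to arcs of $H$ (the vertex $x$ in triangle $T_v$ with matching edge $e$ corresponds to the arc $(v,e)$), and automorphisms of $G$ preserve the canonical triangle family and hence descend to $H$.

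The main obstacle is showing that $H$ is quasi-4-connected. A $3$-separation of $H$ with at least two vertices on each side would lift to a $3$-separation of $G$ crossing no triangle. Since the decomposition is canonical and its only non-trivial small bags are the triangles, such a separation would have to appear as a further tight separation. To rule this out, I would first lift a mixed separation of $H$, replacing each separating edge by a separator vertex in $G$. I would then use the totally-nested property of the canonical decomposition to contradict the fact that the triangles are the only non-quasi-4-connected pieces. Degenerate cases, such as $H=K_4$ giving the truncated tetrahedron, and small $H$, need to be checked by hand.
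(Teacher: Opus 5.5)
Your outline follows the paper's. You quote \cite[Cor.~2]{Tridecomp}, extract a canonical family of triangles, contract it to a cubic graph $H$, and get arc-transitivity of $H$ by letting $\Aut(G)$ descend, as in the paper's lemma on $K_r$-expansions. The gap is the step you yourself flag, the quasi-4-connectivity of $H$; the mechanism you sketch does not close it.

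Lifting a 3-separation of $H$ to a \emph{vertex} 3-separation of $G$ gives no contradiction by itself, for three reasons:
\begin{itemize}
\item $G$ is already not quasi-4-connected, so another large 3-separation is not absurd.
\item Such a separation need not be totally nested, so there is no reason it ``would have to appear'' in the canonical decomposition, which only cuts along totally-nested tri-separations. In the cubic graph $G$ a separator vertex cannot have two neighbours on both sides anyway, so you would have to pass to an edge-cut.
\item ``The triangles are the only non-quasi-4-connected pieces'' is not a statement about separations of $G$ that you can contradict.
\end{itemize}
The usable content of the cited corollary is sharper than your paraphrase: every non-trivial strong tri-separation of $G$ is a 3-edge-cut with a side inducing a triangle. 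Your version (``every vertex lies in exactly one triangle bag'') discards exactly this.

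The missing idea is to apply that corollary to $H$ itself. $H$ is arc-transitive, hence vertex-transitive, and cubic, hence not a cycle. So $H$ is 3-connected, and every non-trivial strong tri-separation of $H$ is a non-atomic 3-edge-cut. Such a cut lifts to a 3-edge-cut of $G$ with two properties:
\begin{itemize}
\item its edges lie in the perfect matching of cross-edges, so it satisfies the matching condition;
\item each side is a union of at least two triangles.
\end{itemize}
This is a non-trivial strong tri-separation of $G$ with no triangle side, a contradiction. Hence $H$ has no non-trivial strong tri-separation and is quasi-4-connected by \cite[1.2.8]{Tridecomp}. Alternatively: an arc-transitive cubic graph containing a triangle is $K_4$.

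There are smaller steps you list as ``to extract'' without arguing them:
\begin{itemize}
\item The triangle-sided tri-separations are 3-edge-cuts, hence totally nested, so they form a star and the triangles are disjoint.
\item Vertex-transitivity makes the triangles cover $V(G)$.
\item If two cross-edges joined the same two triangles, the remaining vertex of each of these two triangles would together form a 2-separator, unless $G=K_3\square K_2$ (which is quasi-4-connected). This is what makes $H$ a simple cubic graph.
\end{itemize}
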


\cref{intro:triVxCon} leaves it open to gain new insights into vertex-transitive quasi-4-connectivity.
Recall that Tutte found an explicit way to uniquely decompose every 2-connected graph~$G$ along 2-separators into smaller parts that are either 3-connected, cycles or~$K_2$'s.
The way in which the parts fit together to form~$G$ is then captured by a unique tree-decomposition of~$G$, the \emph{Tutte-decomposition} of~$G$.
We may think of the Tutte-decomposition as a sophisticated generalisation of the block-cut tree to 2-connected graphs.
Both the block-cut tree and the Tutte-decomposition exhibit a rare combination of two properties that makes them into the perfect tools to approach \cref{IntroProblem}: both decompositions are \emph{explicit}, and they are \emph{canonical} in that they are invariant under all automorphisms of the graphs that they decompose.

This approach was limited to connectivity~$<3$ since the sixties when Tutte found his decomposition, until two years ago, when Carmesin and Kurkofka~\cite{Tridecomp,TriAiC} found a Tutte-type canonical decomposition for 3-connectivity and used it to prove~\cref{intro:triVxCon}.
However, their \emph{tri-decomposition} does not decompose quasi-4-connected graphs any further.
Indeed, new difficulties arise that obstruct all naive attempts at extending the tri-decomposition to quasi-4-connectivity; see the introduction of~\cite{TetraDecomp}.
As it was not known if a Tutte-type canonical decomposition for quasi-4-connectivity even exists, it was open how to extend \cref{intro:triVxCon} to higher connectivities.

Just recently, we have found a Tutte-type canonical decomposition for 4-connected graphs, the \emph{tetra-decomposition}~\cite{TetraDecomp}.
The tetra-decomposition allows us to approach \cref{IntroProblem} for 4-connected graphs.
However, if $G$ is a quasi-4-connected graph, then $G$ may have vertices of degree~3, and hence~$G$ can lie outside of the scope of the tetra-decomposition.
An established way of dealing with degree-3-vertices of graphs while preserving their non-trivial connectivity structure is to consecutively apply $Y$--$\Delta$ transformations.
In~\cite{TetraDecomp}, we introduced a canonical $Y$--$\Delta$ transformation $G\mapsto G^\Delta$ that makes every quasi-4-connected graph $G$ into a 4-connected graph~$G^\Delta$ (unless $G$ has $\le 6$ vertices).
This allows us to indirectly generalise the tetra-decomposition from 4-connectivity to quasi-4-connectivity: starting with a quasi-4-connected graph~$G$, we first apply the canonical $Y$--$\Delta$ operation to make $G$ into a 4-connected graph~$G^\Delta$, and then apply the tetra-decomposition to~$G^\Delta$.
As $G$ and $G^\Delta$ are similar, in practice we can use the tetra-decomposition of $G^\Delta$ to study~$G$.
Here, we will consider $G^\Delta$ only for 3-regular~$G$, in which case $G^\Delta$ defaults to the line-graph of~$G$.

In this paper, we use the above approach to solve \cref{IntroProblem} for quasi-4-connected graphs that are not 2-quasi-5-connected, see \cref{mainthm:transitive}.
Here, a graph~$G$ is \emph{2-quasi-5-connected} if~$G$ is quasi-4-connected and every 4-separation~$(A,B)$ of~$G$ satisfies $|A \sm B| \leq 2$ or $|B \sm A| \leq 2$.
Perhaps surprisingly, many new and rich graph classes occur in the characterisation.
Our proof reflects this: while \cref{intro:triVxCon} follows fairly quickly from having a closer look at the decompositions, the proof of \cref{mainthm:transitive} faced substantially new challenges and spans the rest of this paper.

To state \cref{mainthm:transitive}, we need the notion of graph-decompositions which generalise tree-decom\-posi\-tions in that the decomposition-tree may take the form of an arbitrary graph, such as a cycle, in which case we speak of a \emph{cycle-decomposition}~\cite{canonicalGraphDec}.
We call a graph $X$ a \emph{cycle of $X_1$-torsos, \dots , $X_k$-torsos, $Y_1$-bags, \dots, $Y_\ell$-bags} where the $X_i$ and $Y_j$ are graphs, if $X$ is quasi-4-connected and has a cycle-decomposition with all adhesion-sets of size~2 such that for every bag either its torso is isomorphic to some~$X_i$ or the bag itself is isomorphic to some~$Y_j$.
For more definitions regarding cycle-decompositions, see \cref{dfn:transitive-cycles} or the figures referenced in \cref{mainthm:transitive}.

\begin{restatable}{mainresult}{maintransitive}
    \label{mainthm:transitive}
    A quasi-4-connected finite graph~$G$ is vertex-transitive if and only if $G$ either is
    \begin{enumerate}
        \item\label{mainitm:transitive-1} a $(\ge 4)$-regular quasi-5-connected vertex-transitive graph,
        \item\label{mainitm:3reg2q5c} a 3-regular 2-quasi-5-connected vertex-transitive graph,
        \item\label{mainitm:transitive-2} a $K_4$-expansion of a quasi-5-connected 4-regular arc-transitive graph, or a vertex-transitive $C_4$-expansion of a quasi-5-connected 4-regular arc-transitive graph,
        \item\label{mainitm:transitive-3} a cycle of $K_4$-bags of length $\ge 4$ (\cref{fig:CycleOfK4s}),
        \item\label{mainitm:transitive-4} a cycle alternating between $K_4$-bags and $C_4$-bags of length $\ge 4$ (\cref{fig:CycleAlternating}),
        \item\label{mainitm:transitive-5} a cycle of triangle-bags of length $\ge 6$ (\cref{fig:cycleOfTriangles}),
        \item\label{mainitm:transitive-K22C4} a cycle alternating between $K_{2,2}$-bags and $C_4$-torsos of length~$\ge 6$ (\cref{fig:CycleAltK22C4}),
        \item\label{mainitm:transitive-C4} a cycle of $C_4$-bags of length~$\ge 4$ (\cref{fig:CycleC4sNormal}) or $K_3\square K_2$,
        \item\label{mainitm:transitive-6} a cycle of $K_{2,2}$-bags of length $\ge 3$, the $K_4$-expansion thereof, or a vertex-transitive $C_4$-expansion thereof (\cref{fig:CycleOfK22s}), or
        \item\label{mainitm:transitive-7} the line-graph of the cube, its $K_4$-expansion, or one of the three $C_4$-expansions of it depicted in \cref{fig:LineGraphCube}.
    \end{enumerate}
\end{restatable}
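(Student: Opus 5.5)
The backward direction is a verification, so most of the work goes into the forward direction. That direction hinges on the canonical tetra-decomposition of \cite{TetraDecomp} applied to $G$ when $G$ is $(\ge 4)$-regular, and to $G^\Delta=L(G)$ when $G$ is 3-regular.

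\emph{Backward direction.} I will check each listed class in turn. Each graph must be quasi-4-connected, and its automorphism group must act transitively on vertices. For the cycle-type classes, transitivity comes from rotations and reflections of the decomposition cycle combined with the symmetries inside the bags. For the expansions, it comes from lifting arc-transitivity of the base graph, which is why the hypothesis on the base is arc-transitivity rather than vertex-transitivity.

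\emph{Forward direction.} Let $G$ be vertex-transitive and quasi-4-connected. If $G$ is quasi-5-connected and $(\ge 4)$-regular, we are in \cref{mainitm:transitive-1}. If $G$ is 3-regular, I pass to the line graph $G^\Delta=L(G)$. This graph is 4-connected and 4-regular, and it inherits the vertex- and edge-transitivity properties of $G$ coming from $\Aut(G)$. Once $L(G)$ is classified, I translate back: $K_4$-bags and $K_{2,2}$-bags of $L(G)$ correspond to triangles and to 4-cycles in $G$. This translation should produce \cref{mainitm:3reg2q5c} together with the 3-regular members of the later items, such as the cycle of triangles and the $K_3\square K_2$ case.

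Main case: $G$ is 4-connected and not quasi-5-connected. The tetra-decomposition is canonical, so $\Aut(G)$ acts on its decomposition graph. Vertex-transitivity then forces every vertex to lie in equivalent parts in the same way. I then split according to the type of the parts.
\begin{enumerate}
\item If there is a quasi-5-connected torso, vertex-transitivity forces every vertex to lie in exactly one small part, namely a $K_4$ or a $C_4$. Contracting these parts recovers a 4-regular quasi-5-connected graph. Transitivity on the vertices of $G$ lying inside each small part then yields arc-transitivity of this quotient, which gives \cref{mainitm:transitive-2}.
\item If all parts are small, the decomposition must be a cycle-decomposition with adhesion~2. Enumerating the possible small bags and torsos ($K_4$, $C_4$, $K_{2,2}$, triangles) under the constraint that every vertex has the same local picture gives \cref{mainitm:transitive-3} through \cref{mainitm:transitive-6}.
\item The sporadic line graph of the cube appears where the decomposition has a non-cycle shape of bounded size. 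It has to be found by a finite case check, which gives \cref{mainitm:transitive-7}.
\end{enumerate}

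The hardest step is the second sub-case. There I must show that the tetra-decomposition of a vertex-transitive graph with only small parts is exactly a cycle, or else one of the few finite exceptions. I must also pin down precisely which $C_4$-expansions are vertex-transitive.

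My first approach is a local counting argument. Every vertex has degree~4 (or degree~3 before passing to $L(G)$), and every vertex must meet the same multiset of bag-types. This bounds the number of bags through a vertex and forces the decomposition graph to have maximum degree~2, except in a bounded situation. I would then settle that bounded situation by exhaustive analysis, which is where the cube's line graph appears.
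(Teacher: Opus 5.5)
There is a genuine gap in your treatment of the 3-regular case. The group $\Aut(G)$ acts on $V(L(G))=E(G)$, but transitively only when $G$ is edge-transitive, and most cubic outcomes are not: the prisms $C_\ell\square K_2$ with $\ell\ge 5$, the cycles alternating between $K_{2,2}$-bags and $C_4$-torsos, and every $C_4$-expansion (\cref{lem:C4-expansion-not-edge-transitive}). So $L(G)$ is generally not vertex-transitive, and you cannot classify it with the vertex-transitive 4-connected analysis and translate back. Concretely, for a $C_4$-expansion, $L(G)$ has totally-nested tetra-separations whose separator consists of four vertices, which \cref{lem:transitive-totally-nested} forbids in the vertex-transitive setting. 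Also, $L(K_3\square K_2)$ is not even 4-connected. Your dictionary is off as well: a cycle of triangle-bags is $C_\ell^2$, which is 4-regular, not cubic. The Tutte-bagels that actually occur in $L(G)$ have only 3-connected torsos on $\ge 5$ vertices, and these correspond to $C_4$- and $K_2$-bags of a cycle-partition of $G$.

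The paper avoids all this as follows. It first shows $G$ is internally 4-connected of girth $\ge 4$, or the prism. It then uses the order-preserving bijection between tetra-cuts of $G$ and tetra-separations of $L(G)$ (\cref{tetracutBijTetrasep}), and runs every symmetry argument in $G$ itself:
\begin{itemize}
\item a $\le$-minimal tetra-cut in an $\Aut(G)$-orbit shows every totally-nested tetra-cut has a $C_4$-side;
\item having no tetra-cut yields 2-quasi-5-connectivity;
\item the crossing case is settled by counting $4$- and $5$-cycles in a cycle-partition.
\end{itemize}
No transitivity of $L(G)$ is ever used.

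In the 4-connected case, your claim that ``vertex-transitivity forces every vertex to lie in exactly one small part'' is the crux, and it needs the mechanism of \cref{lem:transitive-totally-nested}. Take a $\le$-minimal tetra-separation in the $\Aut(G)$-orbit of a totally-nested one. If its separator contained a vertex, or its small side contained a vertex not incident with a separator edge, an automorphism moving a suitable vertex there would give a strictly smaller member of the orbit; total nestedness is what makes the image comparable. Hence the separator is four edges and the small side is a $K_4$.

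More seriously, the contracted quotient is \emph{not} automatically quasi-5-connected. It is only 4-regular, arc-transitive, and 4-connected by the Godsil--Royle bound, and the whole analysis must be rerun on it. There, crossing tetra-separations leave only the line graph of the cube and cycles of $K_{2,2}$-bags, since arc-transitivity excludes the other bagel outcomes. This is exactly where the $K_4$- and $C_4$-expansions in \cref{mainitm:transitive-6} and \cref{mainitm:transitive-7} come from. In the totally-nested case, arc-transitivity excludes a second $K_4$-expansion and the alternating cycles of length $4$ and $6$. Your case split has no slot for these expansions.

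Finally, the cycle structure does not come from a degree count on the decomposition graph. It comes from the Crossing Lemma: all links have equal size $0$, $1$ or $2$, giving $K_{4,4}$, the octahedron, or a Tutte-bagel. The line graph of the cube is not a non-cycle exception either; it is a Tutte-bagel of four $5$-vertex torsos (\cref{lem:transitive-cube}).
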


\newpage

\begin{figure}[ht]
\centering
\begin{minipage}{.3\textwidth}%
    \centering
    \captionsetup{width=1\textwidth}
    \includegraphics[height=7\baselineskip]{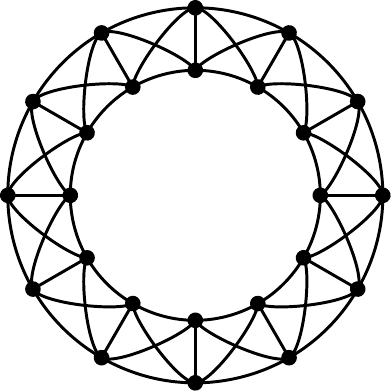}
    \captionof{figure}{{\,}\\A cycle of $K_4$-bags\\{\,}}
    \label{fig:CycleOfK4s}
\end{minipage}\hfill\begin{minipage}{.3\textwidth}
    \centering
    \captionsetup{width=1\textwidth}
    \includegraphics[height=7\baselineskip]{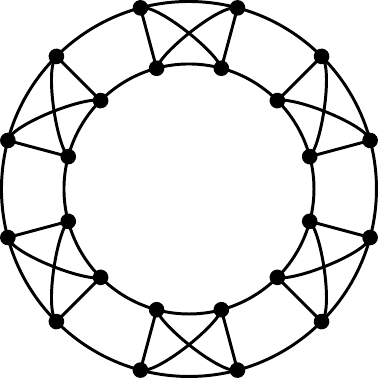}
    \captionof{figure}{{\,}\\A cycle alternating between $K_4$-bags and $C_4$-bags}
    \label{fig:CycleAlternating}
\end{minipage}\hfill%
\begin{minipage}{.3\textwidth}%
    \centering
    \captionsetup{width=1\textwidth}
    \includegraphics[height=7\baselineskip]{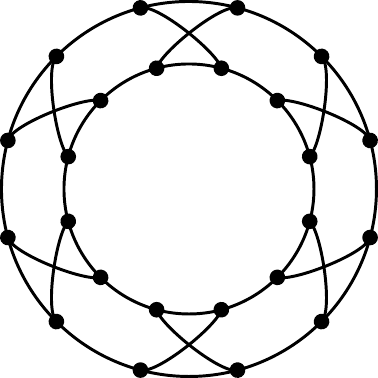}
    \captionof{figure}{{\,}\\A cycle alternating between $K_{2,2}$-bags and $C_4$-torsos}
    \label{fig:CycleAltK22C4}
\end{minipage}
\end{figure}

\begin{figure}[ht]
\centering
\begin{minipage}{.48\textwidth}
    \centering
    \captionsetup{width=1\textwidth}
    \includegraphics[height=7\baselineskip]{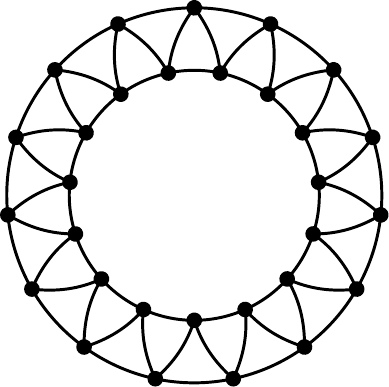}\hfill%
    \includegraphics[height=7\baselineskip]{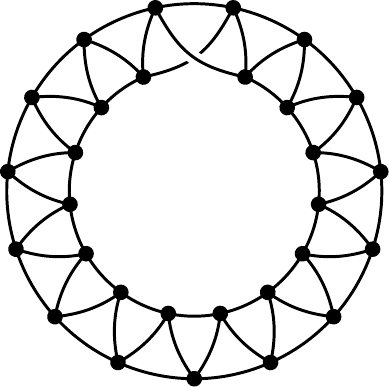}
    \captionof{figure}{{\,}\\Cycles of triangle-bags\\{\,}\\{\,}}
    \label{fig:cycleOfTriangles}
\end{minipage}\hfill\begin{minipage}{.48\textwidth}
    \centering
    \captionsetup{width=1\textwidth}
    \includegraphics[height=7\baselineskip]{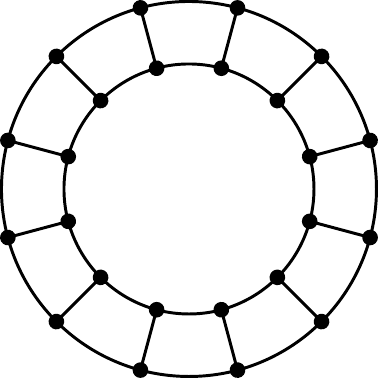}\hfill%
    \includegraphics[height=7\baselineskip]{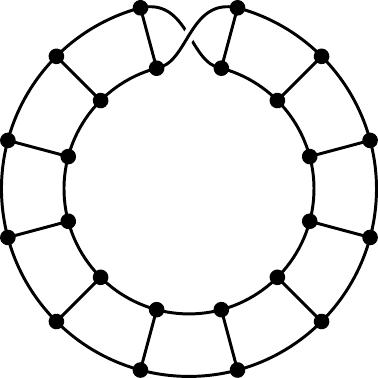}
    \captionof{figure}{{\,}\\Cycles of $C_4$-bags\\{\,}\\{\,}}
    \label{fig:CycleC4sNormal}
\end{minipage}
\end{figure}

\begin{figure}[ht]
    \centering
    \captionsetup{width=1\textwidth}
    \includegraphics[height=7\baselineskip]{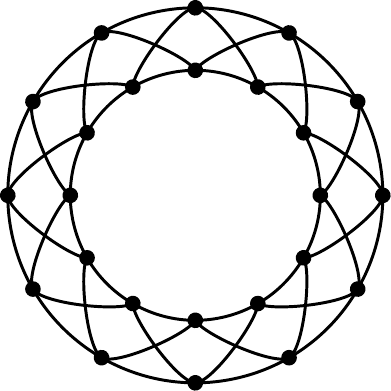}\hfill%
    \includegraphics[height=7\baselineskip]{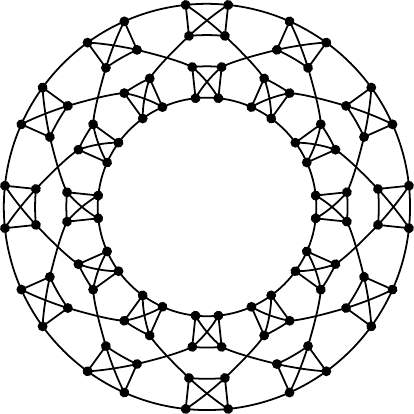}\hfill%
    \includegraphics[height=7\baselineskip]{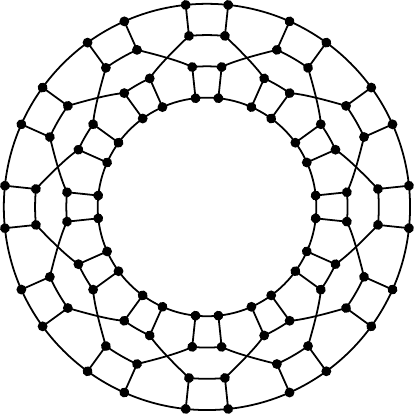}\hfill%
    \includegraphics[height=7\baselineskip]{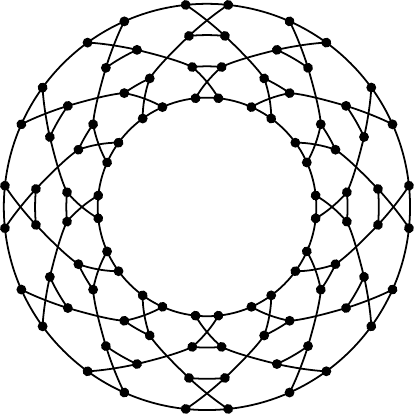}
    \captionof{figure}{A cycle of $K_{2,2}$-bags, its $K_4$-expansion, and two vertex-transitive $C_4$-expansions}
    \label{fig:CycleOfK22s}
\end{figure}

\begin{figure}[ht]
    \centering
    \captionsetup{width=1\textwidth}
    \includegraphics[height=5.5\baselineskip]{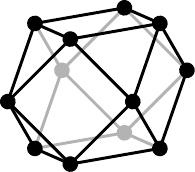}\hfill%
    \includegraphics[height=5.5\baselineskip]{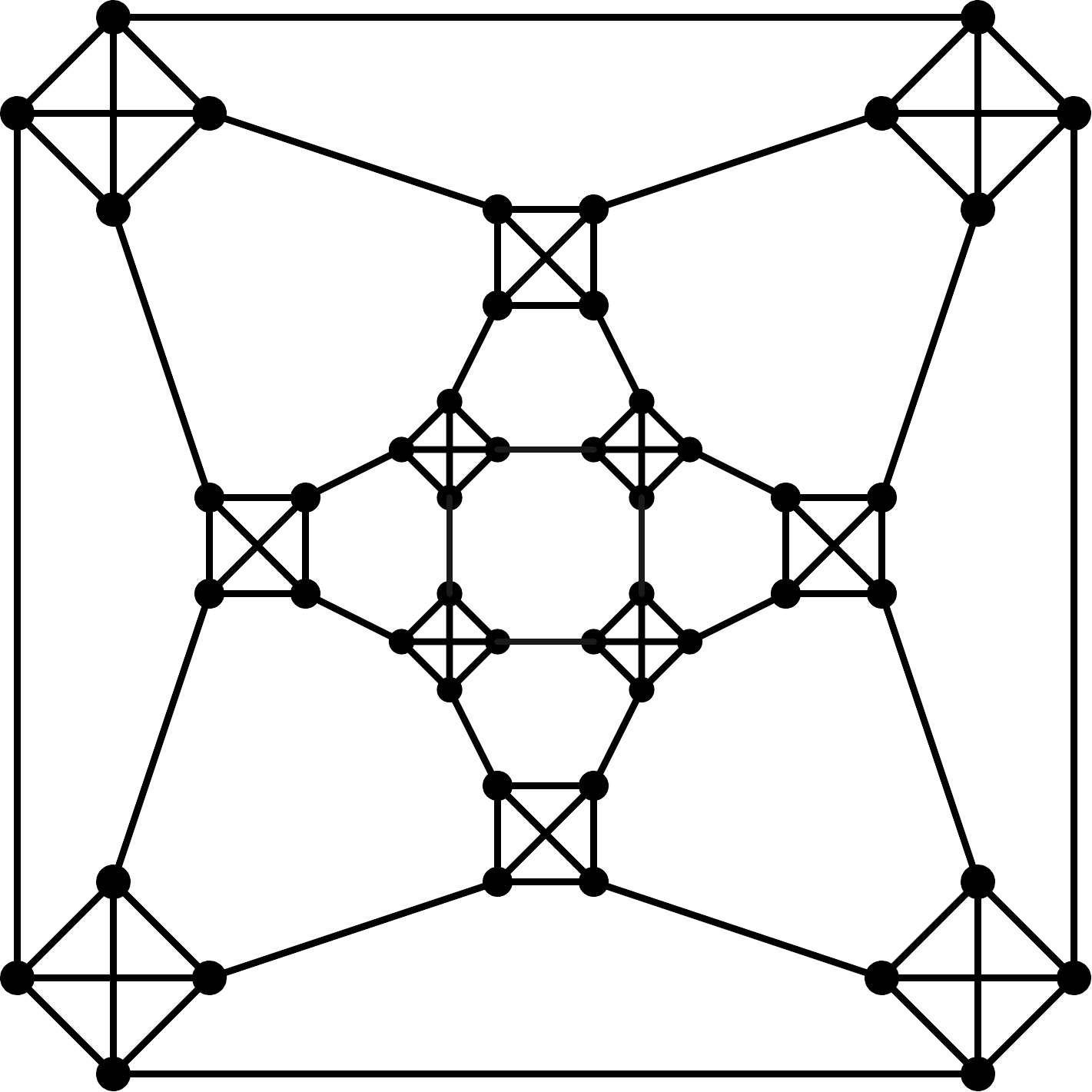}\hfill%
    \includegraphics[height=5.5\baselineskip]{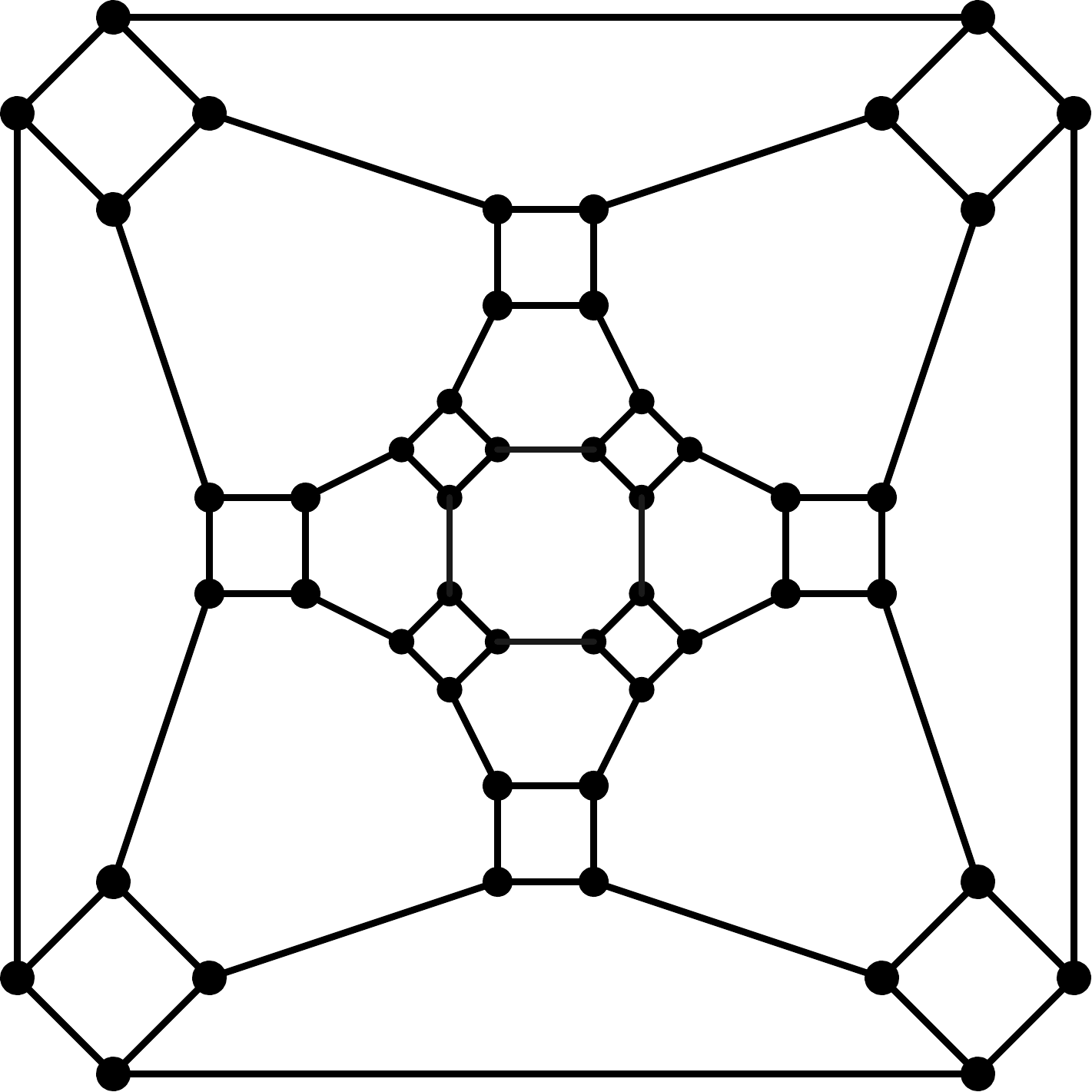}\hfill%
    \includegraphics[height=5.5\baselineskip]{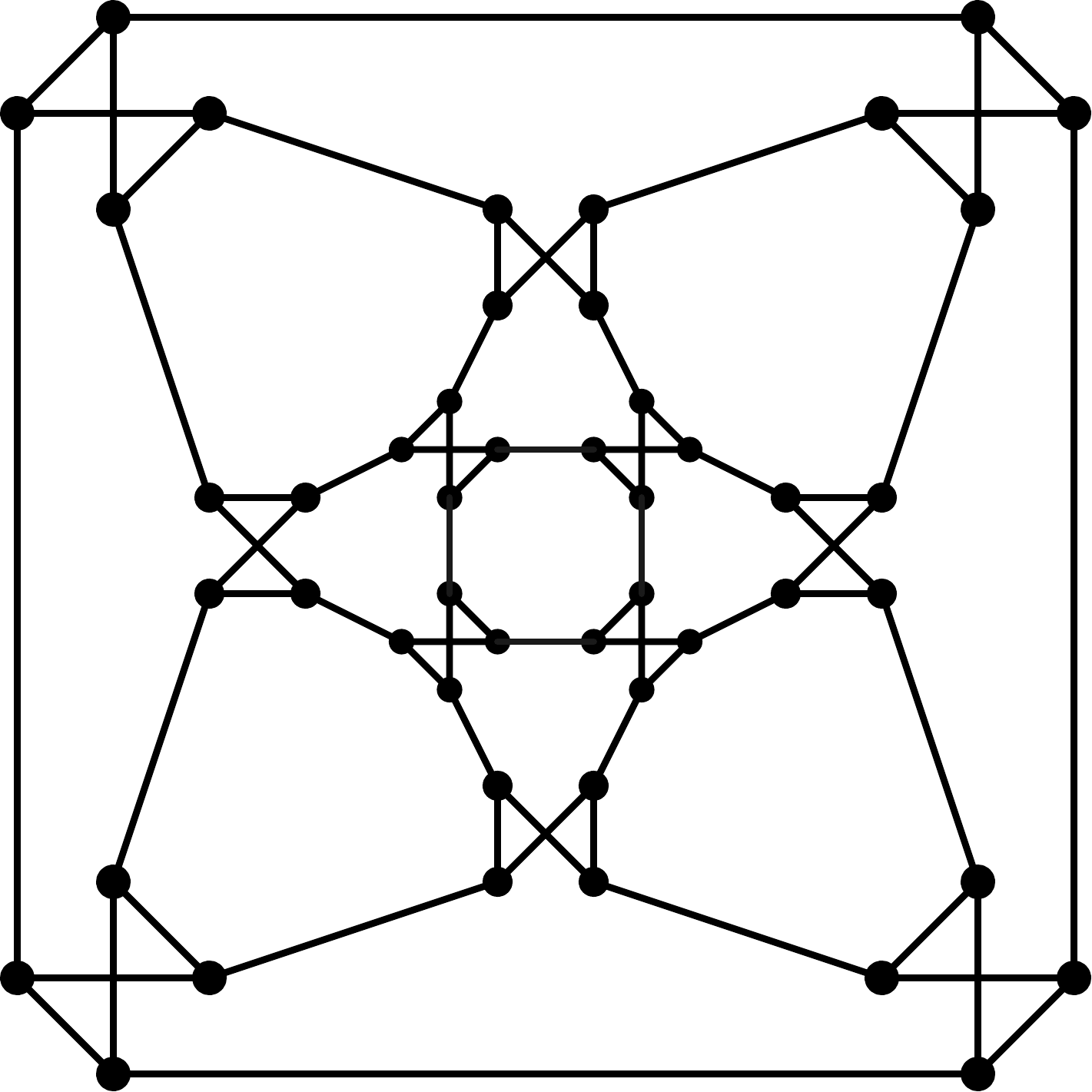}\hfill%
    \includegraphics[height=5.5\baselineskip]{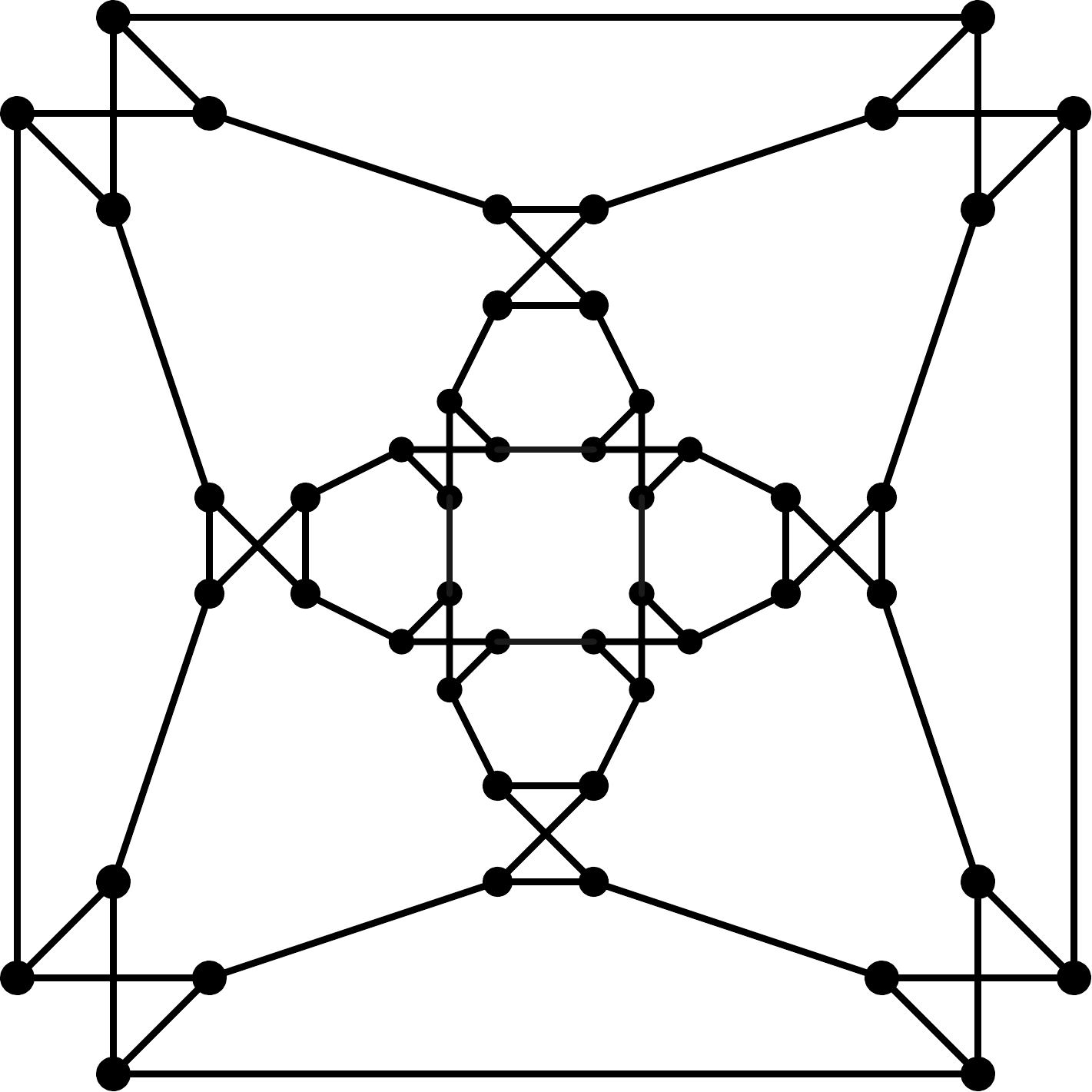}\hfill%
    \captionof{figure}{The line-graph of the cube, its $K_4$-expansion, and its vertex-transitive $C_4$-expansions}
    \label{fig:LineGraphCube}
\end{figure}

\cref{mainthm:transitive,intro:triVxCon} should be combined in practice to obtain a characterisation of all vertex-transitive finite connected graphs that are not 2-quasi-5-connected.
It turns out that the $K_3$-expansions mentioned in \cref{intro:triVxCon} can be excluded for all 3-regular outcomes of \cref{mainthm:transitive}, with only two exceptions:

\newpage

\begin{maincorollary}\label{mainCor}
    A connected finite graph~$G$ is vertex-transitive if and only if it is one of the following:
    \begin{itemize}
        \item one of the graphs in \emph{\cref{mainthm:transitive}} \cref{mainitm:transitive-1}--\cref{mainitm:transitive-7} (all outcomes possible),
        \item the $K_3$-expansion of a 3-regular 2-quasi-5-connected arc-transitive graph,
        \item the $K_3$-expansion of a cube,
        \item a cycle, $K_2$ or~$K_1$.
    \end{itemize}
\end{maincorollary}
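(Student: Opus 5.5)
We derive \cref{mainCor} by combining \cref{intro:triVxCon} with \cref{mainthm:transitive}.

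For the backward direction, every graph in \cref{mainthm:transitive} \cref{mainitm:transitive-1}--\cref{mainitm:transitive-7} is vertex-transitive by the backward direction of \cref{mainthm:transitive}. Cycles, $K_2$ and $K_1$ are clearly vertex-transitive. A $K_3$-expansion of a 3-regular arc-transitive graph $H$ is vertex-transitive. To see this, identify the vertices of the expansion with the arcs $(u,v)$ of $H$: the vertex of $H_u$ incident with $\hat e$ for $e=uv$. Then the automorphisms of $H$ act on the expansion, and they act transitively on arcs. The cube is 3-regular and arc-transitive, so its $K_3$-expansion is covered as well.

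For the forward direction, let $G$ be vertex-transitive and connected. By \cref{intro:triVxCon}, $G$ is either
\begin{itemize}
\item a cycle, $K_2$ or $K_1$,
\item quasi-4-connected and vertex-transitive, in which case \cref{mainthm:transitive} places it in \cref{mainitm:transitive-1}--\cref{mainitm:transitive-7}, or
\item the $K_3$-expansion of a quasi-4-connected 3-regular arc-transitive graph $H$.
\end{itemize}
In the last case $H$ is vertex-transitive, so \cref{mainthm:transitive} applies to $H$. Since $H$ is 3-regular, it can only lie in those outcomes that contain 3-regular graphs, and it remains to go through them.

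\begin{itemize}
\item Outcome \cref{mainitm:3reg2q5c}: $H$ is a 3-regular 2-quasi-5-connected arc-transitive graph, which is the second bullet of \cref{mainCor}.
\item Outcome \cref{mainitm:transitive-1}: this requires degree $\ge4$, so $H$ cannot lie here.
\item Outcome \cref{mainitm:transitive-2}: $K_4$- and $C_4$-expansions of 4-regular graphs are 4- resp.\ 3-regular. A $C_4$-expansion $H$ has edges of two kinds: cycle edges and matching edges. A matching edge lies in no triangle, and in $C_4$ it lies in no 4-cycle inside a copy of $C_4$. I plan to show that cycle edges lie in strictly more 4-cycles than matching edges, or use some other local invariant, unless the base graph is very special. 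Since arc-transitivity forces all edges to be alike, this contradiction rules $H$ out.
\item Outcomes \cref{mainitm:transitive-5}, \cref{mainitm:transitive-K22C4}, \cref{mainitm:transitive-C4}, \cref{mainitm:transitive-6}, \cref{mainitm:transitive-7}: these contain 3-regular members, namely cycles of triangle-bags, the relevant cycles of $C_4$-bags, $K_3\square K_2$, the $C_4$-expansions, and the $C_4$-expansions of the line-graph of the cube. For each 3-regular family, I would show it is not arc-transitive. The method is to exhibit two edges distinguished by an automorphism-invariant, such as the number of triangles or 4-cycles through the edge, or whether the edge lies in an adhesion-set or torso edge of the canonical cycle-decomposition. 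For example, in a cycle of triangle-bags, triangle edges and linking edges differ by the number of triangles containing them.
\end{itemize}

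The expected exceptions are the cube (one of the $C_4$-bag cycles of length 4, i.e.\ $C_4\square K_2$) and $K_4$ (which is the $K_{2,2}$-bag cycle degenerate case / excluded since not quasi-4-connected expansions issue). The cube yields the third bullet. I also need to check that $K_3\square K_2$ and the remaining small members are not arc-transitive; the prism fails because of its triangles.

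The main obstacle is this case analysis through all the 3-regular families of \cref{mainthm:transitive}. It must show that each is non-arc-transitive except for the cube, and the 4-cycle counts in small members (length 3 or 4 cycles of $C_4$-bags, the expansions of the line-graph of the cube) need care. I would first try the edge-invariant "number of 4-cycles through the edge", and fall back on explicit small computations for the finitely many small exceptions.
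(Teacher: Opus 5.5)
Your proposal follows the paper's proof. Combine \cref{intro:triVxCon} with \cref{mainthm:transitive} applied to the 3-regular arc-transitive base graph~$H$, then discard every 3-regular outcome other than \cref{mainitm:3reg2q5c} and the cube by counting 4-cycles (or triangles) through edges; this is exactly \cref{lem:C4-expansion-not-edge-transitive,lem:3-regular-arc-transitive}. The first invariant you propose already works with no exceptions: in a $C_4$-expansion a matching edge lies on no cycle of length $\le 5$ because the base graph is simple, while copy-edges lie on the copy's 4-cycle. Two side remarks are off but harmless: cycles of triangle-bags are $C_\ell^2$ and hence 4-regular, and $K_4$ is not an exception since it lies in outcome \cref{mainitm:3reg2q5c}.
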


\noindent\textbf{More related work.}
The combination of high symmetry and planarity has received a fair share of attention, see for example~\cite{PlanarCubicCayley,PlanarCayleyEnumerableOne,PlanarCayleyEnumerableTwo,PlanarTransitiveGraphs}.
Presentations of groups have been generalised beyond Cayley graphs to capture vertex-transitive graphs~\cite{AgelosVTpresentation}.
In an effort to generalise Bass-Serre theory to quasi-transitive graphs, some seminal results in the area have been generalised, see for example~\cite{AccessibilityVT,StallingsQT}.
A recent trend in the area is to study quasi-transitive graphs from the perspective of coarse graph theory, see for example~\cite{CoarseGridCounter,CoarseHalinGrid,AsymptoticGrids,daviesStringGraphs,UgoLouis,UgoNote,Ugo,QTwithoutKN}.
Coverings of graphs, in the sense of general topology, provide a particularly well-behaved class of quasi-transitive graphs.
Coverings are currently being used in graph-minor theory to study local phenomena in finite graphs through their infinite coverings, see for example~\cite{Ponyhof,canonicalGraphDec,georgakopoulos2017covers}.

\medskip

\noindent\textbf{Organisation of this paper.}
In \cref{sec:vertexTransitive}, we prove a version of \cref{mainthm:transitive} for 4-connected graphs, which is \cref{4conCase:transitive}.
In \cref{sec:3regQ4C}, we prove a version of \cref{mainthm:transitive} for 3-regular quasi-4-connected graphs, which is \cref{3regCase:transitive}, and combine it with \cref{4conCase:transitive} to prove \cref{mainthm:transitive}.
In \cref{sec:4regArcTrans}, we derive \cref{mainCor}.
For the sake of completeness, in \cref{sec:3Con} we quickly derive \cref{intro:triVxCon} from \cite[Cor.~2]{TriAiC}.\medskip

\noindent\textbf{Terminology.}
For graph-theoretic terminology we follow \cite{bibel}.
We assume familiarity with the terminology in \cite[§2]{TetraDecomp}.
Sometimes we will use results from \cite{TetraDecomp} which may rely on more terminology from~\cite{TetraDecomp}, but not much.

\section{Analysing 4-connectivity}\label{sec:vertexTransitive}

In this section, we prove \cref{mainthm:transitive}.
Recall that an \defn{$H$-expansion} of a $|V(H)|$-regular graph~$G$ is a graph obtained from~$G$ by replacing the vertices~$v$ of~$G$ with pairwise disjoint copies~$H_v$ of~$H$ and by replacing every edge $e=uv$ of~$G$ with an $H_u$--$H_v$ edge~$\hat e$ so that~$\hat e_1$ and~$\hat e_2$ share no ends for every two $e_1 \neq e_2\in E(G)$.
Thus, the edges~$\hat e$ form a perfect matching of the $H$-expansion.
Note that if~$H$ is complete then the $H$-expansion of~$G$ is unique up to isomorphism.
The \defn{clique-expansion} of a graph~$G$ is the graph~$G'$ that is obtained from~$G$ by replacing every vertex~$v \in V(G)$ by a clique~$C_v$ of size the degree of~$v$, and every edge~$e=uv$ by a $C_u$--$C_v$ edge~$\hat e$ such that these edges~$\hat e$ form a perfect matching in~$G'$.

Recall that a graph~$G$ is \defn{arc-transitive} if for every two ordered pairs~$(u_1,v_1),(u_2,v_2)$ of vertices forming edges $u_1 v_1,u_2 v_2\in E(G)$, there is an automorphism $\varphi$ of~$G$ such that~$(\varphi(u_1),\varphi(v_1))=(u_2,v_2)$.

\begin{theorem}\textnormal{\cite[Theorem~3.4.2]{Godsil}}\label{GodsilResult}
    Every finite vertex-transitive $d$-regular graph is $k$-connected for $k:=\lceil \frac{2}{3}(d+1)\rceil$.
\end{theorem}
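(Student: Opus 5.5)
The plan is the classical Mader--Watkins argument via atoms.

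\textbf{Setup.} Let $G$ be a finite vertex-transitive $d$-regular graph with connectivity $\kappa$. If $G$ is complete, then $\kappa=d\ge\lceil\frac23(d+1)\rceil$ for $d\ge 2$, and $d\le 1$ is trivial. So assume $G$ is not complete.
\begin{itemize}
\item A \emph{fragment} is a vertex set $A$ with $|N(A)|=\kappa$ and $V(G)\sm(A\cup N(A))\ne\emptyset$.
\item An \emph{atom} is a fragment of minimum size $a$.
\end{itemize}
Note that if $A$ is a fragment, then so is $\bar A:=V(G)\sm(A\cup N(A))$, with $N(\bar A)=N(A)$; hence $|\bar A|\ge a$.

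\textbf{Key lemma.} If $A$ is an atom and $F$ is a fragment with $A\cap F\neq\emptyset$, then $A\subseteq F$. I would prove this by the usual submodularity argument: $|N(A\cap F)|+|N(A\cup F)|\le|N(A)|+|N(F)|=2\kappa$. After possibly replacing $F$ by $\bar F$, one of $A\cup F$ and $A\cap F$ still has a nonempty outside, so it is a fragment and has boundary at least $\kappa$. This forces $|N(A\cap F)|\le\kappa$, so $A\cap F$ is a fragment smaller than $A$ unless $A\cap F=A$. Applying the lemma to two atoms shows that distinct atoms are disjoint. Automorphisms map atoms to atoms, and by vertex-transitivity every vertex lies in an atom. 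Hence the atoms partition $V(G)$ into blocks, each of size $a$.

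\textbf{Atoms meeting a boundary.} Let $A$ be an atom, $S:=N(A)$ and $C:=\bar A$. If an atom $B$ meets $S$, then $B\subseteq S$. Indeed, $B$ is disjoint from $A$; and if $B$ met both $S$ and $C$, I would apply the key lemma with $F=C$ to get $B\subseteq C$, contradicting $B\cap S\ne\emptyset$. The main care here is to check that the lemma's case distinction indeed excludes $B$ meeting $C$. Hence $S$ is a union of atoms.

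Now take an atom $B\subseteq S$. Since $S$ is a minimum separator, every $y\in S$ has neighbours in $A$ and in $C$. Therefore $N(B)$ meets the atom $A$ and also meets some atom $A'\subseteq C$. Applying the previous paragraph to $B$ in place of $A$, every atom meeting $N(B)$ lies in $N(B)$. So $A\cup A'\subseteq N(B)$, and hence $\kappa=|N(B)|\ge 2a$.

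\textbf{Degree count.} A vertex $x\in A$ has all its neighbours in $(A\sm\{x\})\cup S$, so $d\le a-1+\kappa$. Combining this with $a\le\kappa/2$ gives $d+1\le\frac32\kappa$, that is, $\kappa\ge\frac23(d+1)$. Since $\kappa$ is an integer, $\kappa\ge\lceil\frac23(d+1)\rceil$.

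The main obstacle is the key lemma. It needs a careful choice between $F$ and $\bar F$ so that the union $A\cup F$ (or intersection) still leaves a nonempty outside and is therefore a genuine fragment. I would handle this via the standard inequality $|N(X\cap Y)|+|N(X\cup Y)|\le |N(X)|+|N(Y)|$, together with the observation that $|\bar F|\ge a$.
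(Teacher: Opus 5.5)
Your proposal is correct and is the classical Mader--Watkins atom argument, which is the standard proof of the result of Godsil and Royle that the paper only cites; the paper gives no proof of its own. Two small repairs are needed: in the key lemma the outside of $A\cup F$ is exactly $\bar A\cap\bar F$, and when this is empty, swapping $F$ and $\bar F$ only helps if $A$ meets $\bar F$ and $\bar A$ meets $F$, so the leftover configurations $\bar F\subseteq N(A)$ and $\bar A\subseteq N(F)$ must be ruled out by counting with $|\bar F|\ge a$ and $|\bar A|\ge a$ respectively; and the statement tacitly requires $G$ connected and $d\ge 2$ (your choice of an atom $B\subseteq S$ uses $S\neq\emptyset$, and $K_2$ is a genuine exception to the statement, not a trivial case).
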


\begin{corollary}\label{4reg4con}
    Every finite $d$-regular graph with $d\ge 4$ is 4-connected.\qed
\end{corollary}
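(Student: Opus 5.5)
The statement as printed omits two hypotheses that are clearly intended and that the proof needs. A disjoint union of two copies of $K_5$ is $4$-regular but not even connected, so we read \cref{4reg4con} as: \emph{every finite connected vertex-transitive $d$-regular graph with $d\ge 4$ is $4$-connected}. This is also the only form in which it is used later, namely for vertex-transitive graphs. With this reading the corollary is a direct specialisation of \cref{GodsilResult}, and the plan is just to check the arithmetic and the vertex count.

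First I will check the connectivity bound. Let $G$ be such a graph. By \cref{GodsilResult} (which, as in \cite{Godsil}, concerns connected graphs), $G$ is $k$-connected for $k=\lceil \tfrac23(d+1)\rceil$. The function $d\mapsto \lceil \tfrac23(d+1)\rceil$ is non-decreasing in $d$. At $d=4$ it equals $\lceil 10/3\rceil = 4$, so $k\ge 4$ for every $d\ge 4$. Since $k$-connectivity implies $k'$-connectivity for every $k'\le k$, it follows that $G$ is $4$-connected.

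Second, I will check that $G$ has enough vertices, in case the convention for $4$-connectivity requires $|G|>4$. Any vertex together with its neighbours gives $d+1\ge 5$ distinct vertices, so $|G|\ge 5$. Hence no degenerate small case arises.

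There is no genuine obstacle. The only care needed is to make the implicit hypotheses (connected, vertex-transitive) explicit, and to confirm that \cref{GodsilResult} is applied within its stated scope.
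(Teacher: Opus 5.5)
Your proposal is correct and takes the same route as the paper: the corollary is an immediate specialisation of \cref{GodsilResult} (which is why no proof is written out), with the vertex-transitivity and connectedness hypotheses implicit. You read these hypotheses correctly, and they match the only use of the corollary, in Case~3 of \cref{lem:transitive-contract-essentially-5-connected}; after that, the observation $\lceil 10/3\rceil=4$ together with monotonicity in $d$ and $|G|\ge d+1\ge 5$ is all that is needed.
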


A graph~$G$ is \defn{essentially 5-connected} if~$G$ is 4-connected and every tetra-separation $(A_1,A_2)$ of $G$ has only edges in its separator and at least one side~$A_i$ with~$G[A_i]=K_4$.

\begin{definition}\label{dfn:transitive-cycles}
Let $G$ be a graph with a cycle-decomposition $\cO=(O,(G_t)_{t \in V(O)})$.
Let $\ell$ denote the length of the cycle~$O$.
We say that~$G$ is a
    \begin{itemize}[leftmargin=*]
        \item \defn{cycle of~$K_4$-bags of length~$\ell$} if all bags of $\cO$ are $K_4$'s and all adhesion-sets of~$\cO$ are disjoint (\cref{fig:CycleOfK4s});
        \item \defn{cycle alternating between~$K_4$-bags and~$C_4$-bags of length~$\ell$} if the bags along $O$ alternate between being $K_4$'s and being 4-cycles, and all adhesion-sets of $\cO$ are disjoint (\cref{fig:CycleAlternating});
        \item \defn{cycle of triangle-bags of length~$\ell$} if all bags are triangles, and every two non-consecutive edges on~$O$ have disjoint adhesion-sets (\cref{fig:cycleOfTriangles});
        \item \defn{cycle alternating between $K_{2,2}$-bags and $C_4$-torsos of length~$\ell$} if the torsos along~$O$ alternate between being $K_4$'s and $C_4$'s, and every adhesion-set of~$\cO$ is independent and all adhesion-sets of~$\cO$ are disjoint (so the $K_4$-torsos have $K_{2,2}$-bags) (\cref{fig:CycleAltK22C4});
        \item \defn{cycle of $C_4$-bags of length~$\ell$} if all bags of~$\cO$ are $C_4$'s and all adhesion-sets of~$\cO$ induce disjoint $K_2$'s (\cref{fig:CycleC4sNormal});
        \item \defn{cycle of~$K_{2,2}$-bags of length~$\ell$} if all bags of $\cO$ are $K_{2,2}$'s, all torsos of $\cO$ are $K_4$'s and all adhesion-sets of~$\cO$ are disjoint (\cref{fig:CycleOfK22s}).
    \end{itemize}
\end{definition}

\begin{observation}{\,}
\begin{itemize}[leftmargin=*]
    \item Every cycle of~$K_4$-bags of length~$\ell$ is isomorphic to~$K_2 \boxtimes C_\ell$.
    \item Every cycle of~$K_4$-bags of length~3 is isomorphic to~$K_6$.
    \item Every cycle of~$K_{2,2}$-bags of length~3 is isomorphic to the octahedron.
    \item Every cycle of~$K_{2,2}$-bags of length~4 is isomorphic to~$K_{4,4}$.
    \item Every cycle alternating between~$K_4$-bags and~$C_4$-bags of length~$\ell\in\{4,6\}$ is essentially 5-connected.
    \item Every cycle of triangle-bags of length~$\ell$ is isomorphic to~$C_\ell^2$ (the graph obtained from the $\ell$-cycle~$C_\ell$ by joining every vertex~$v$ to all vertices of distance at most~2 from~$v$ on~$C_\ell$).
    \item Every cycle of~triangle-bags of length~5 is isomorphic to~$K_5$.
    \qed
\end{itemize}
\end{observation}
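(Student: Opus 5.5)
The statement to prove is the Observation listing seven isomorphism facts about the cycle-graphs of \cref{dfn:transitive-cycles}. Each item is a direct verification from the definitions, so the plan is to write down the vertex set and edge set of each graph explicitly and compare with the target graph.

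\emph{Cycles of $K_4$-bags.} Let the cycle $O$ be $t_1,\dots,t_\ell$. Since the adhesion-sets have size~2, are pairwise disjoint, and each bag $K_4$ is covered by its two adhesion-sets, every bag $G_{t_i}$ is the union of the adhesion-sets $S_{i-1,i}$ and $S_{i,i+1}$. So $V(G)$ is the disjoint union of the $\ell$ pairs $S_{i,i+1}$. Two vertices are adjacent exactly when they lie in the same pair or in consecutive pairs, because $G$ is the union of its bags. This is precisely $K_2\boxtimes C_\ell$: identify the pair $S_{i,i+1}$ with $K_2\times\{i\}$. For $\ell=3$, every two of the three pairs are consecutive, so all six vertices are pairwise adjacent and $G=K_6$.

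\emph{Cycles of $K_{2,2}$-bags.} The same bookkeeping applies, except that each adhesion-set is independent: a torso is $K_4$ while the bag is $K_{2,2}$, so the missing edges of the bag are exactly the two adhesion pairs. Hence $G$ is the lexicographic-type product $C_\ell[\overline{K_2}]$. For $\ell=3$ this is $K_{2,2,2}$, the octahedron. For $\ell=4$, every vertex of the pair $S_{i,i+1}$ is adjacent to all vertices of the two neighbouring pairs and to nothing else. The pairs $S_{12}\cup S_{34}$ and $S_{23}\cup S_{41}$ then form the bipartition of a $K_{4,4}$.

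\emph{Cycles of triangle-bags.} Here the adhesion-sets of consecutive edges of $O$ meet in exactly one vertex, since a triangle has only three vertices and the bag must be covered. Label the vertices so that bag $t_i$ is $\{v_i,v_{i+1},v_{i+2}\}$ with indices mod $\ell$. Then each $v_j$ is adjacent precisely to $v_{j\pm1}$ and $v_{j\pm2}$, which gives $C_\ell^2$. Non-consecutive adhesion-sets are disjoint, so there are no extra identifications when $\ell\ge 5$. For $\ell=5$ these four neighbours are all the other vertices, so $G=K_5$.

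\emph{The alternating $K_4$/$C_4$ case for $\ell\in\{4,6\}$.} This is the only item needing more than relabelling, and I expect it to be the main obstacle. First, $G$ is 4-regular. Each vertex lies in one $K_4$-bag, where it has three neighbours, and in one $C_4$-bag, where its adhesion partner is a neighbour and is already counted. So $G$ is 4-connected by \cref{4reg4con}.

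For essential 5-connectivity, I would take a tetra-separation $(A_1,A_2)$ and argue as follows. If the separator contains a vertex, then since the graph is small (8 or 12 vertices), deleting the separator would leave a side that is too small or would violate the definition of tetra-separation; I would rule this out by a short case check along the cycle. Otherwise the separator consists of four edges. For $\ell=4$, the only 4-edge cuts separating at least two vertices on each side are the ones cutting around a single $K_4$-bag, so that side induces $K_4$, as required. For $\ell=6$ the same holds: a cut through two $C_4$-bags separating an arc of bags needs 4 edges and leaves a $K_4$ side only when the arc is a single bag. Longer arcs would need to be excluded using the minimality conditions on tetra-separations from \cite{TetraDecomp}. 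I would verify this last point by enumerating the arcs.
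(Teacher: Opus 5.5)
Your checks of the product descriptions ($K_2\boxtimes C_\ell$, $K_6$, $C_\ell[\overline{K_2}]$ giving the octahedron and $K_{4,4}$, $C_\ell^2$, $K_5$) are fine; the paper gives no proof, and these are the routine verifications it leaves to the reader. One small point on triangle-bags: that the two adhesion-sets at a bag are distinct (your ``the bag must be covered'') is not a decomposition axiom. It holds because a vertex lying in only one triangle-bag would have degree~$2$, contradicting the 3-connectivity built into the definition.

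The genuine gap is the alternating $K_4$/$C_4$ item. There you only announce case checks, and the reasons you give are not the ones that work.
\begin{itemize}
\item Smallness does not exclude separator vertices: with $12$ vertices, deleting four still leaves room for two sides of size at least~$2$.
\item No minimality condition on tetra-separations is involved. For $\ell=6$ there are only three $K_4$-bags, so the ``longer arcs'' you worry about never arise.
\item More importantly, your enumeration of arcs tacitly assumes that no tetra-separation cuts through a $K_4$-bag, and you never justify this.
\item \cref{4reg4con} is derived from \cref{GodsilResult} and only holds for vertex-transitive graphs. You must therefore first note that $G$ is vertex-transitive, or check 4-connectivity directly. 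Vertex-transitivity follows from the rotations and reflections of the cycle together with swapping the two ends of each 2-matching between consecutive $K_4$-bags.
\end{itemize}

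The missing idea is to combine the degree- and matching-conditions with the fact that every vertex $v$ lies in a unique $K_4$-bag $Q(v)$ and has exactly one neighbour outside it.
\begin{itemize}
\item Suppose a vertex $v$ lies in $S(A_1,A_2)$. By 4-regularity and the degree-condition, $v$ has exactly two neighbours in each of $A_1\sm A_2$ and $A_2\sm A_1$. At most one of these is the outside neighbour, so $Q(v)-v$ is split $2+1$ between the sides. The lone vertex is then an end of two separator edges, violating the matching-condition. Hence $S(A_1,A_2)$ consists of four edges.
\item The same condition shows that no $K_4$-bag is split. A $1+3$ split gives three separator edges at one vertex. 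A $2+2$ split gives four separator edges forming a 4-cycle.
\item So each side is a union of $K_4$-bags. For $\ell\in\{4,6\}$ there are only two or three $K_4$-bags, so one side is a single $K_4$.
\end{itemize}
For $\ell\ge 8$, splitting the cycle into two arcs of at least two $K_4$-bags each gives a tetra-cut with no $K_4$ side, which is why the length is restricted.
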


\begin{lemma}
\label{lem:transitive-totally-nested}
    Let $G$ be a vertex-transitive 4-connected finite graph, and let $(A_1,A_2)$ be a totally-nested tetra-separation in~$G$. Then $S(A_1,A_2)$ consists of four edges, and $(A_1,A_2)$ has a side~$A_i$ with $G[A_i]=K_4$.
\end{lemma}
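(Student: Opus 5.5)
The plan is to use canonicity together with finiteness. Since $(A_1,A_2)$ is totally nested, so is every image $\varphi(A_1,A_2)$ under an automorphism~$\varphi$ of~$G$: automorphisms preserve tetra-separations and crossing. So the orbit $N$ of $(A_1,A_2)$ under $\Aut(G)$ is a nested set of totally-nested tetra-separations, and it is invariant under all automorphisms. Among all separations in~$N$ and their inverses, I will pick a side $A$ that is inclusion-minimal, or more robustly one minimising $|A|$. The idea is that vertex-transitivity forces every vertex to lie in some separator or some small side. Minimality combined with nestedness then makes the small side tiny and rigid.

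\textbf{Step 1.} First I will show that the separator contains no vertex. Suppose $v\in A_1\cap A_2$ is a separator vertex. Then by transitivity every vertex lies in the separator of some member of~$N$. Take a minimal side $A$ with separator~$S$, and let $v'\in A\setminus S$; this set is nonempty since tetra-separations have nonempty proper sides. Some $\varphi(A_1,A_2)\in N$ has $v'$ in its separator. That separation is nested with $(A,\,\cdot)$ and has $v'$, an interior vertex of~$A$, in its separator. So one of its sides lies properly inside~$A$, which contradicts minimality. I expect I will have to appeal to the tetra-separation nestedness conventions of \cite{TetraDecomp} to get "properly inside"; this is the delicate point. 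The same argument handles mixed separators containing some vertices and some edges. The conclusion is that $S(A_1,A_2)$ consists of four edges.

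\textbf{Step 2.} Next I will show that the minimal side is $K_4$. Let $A$ be a minimal side; it is bounded by four edges. Every vertex $x\in A$ is mapped by some automorphism~$\psi$ to a vertex of~$A$. If some $x\in A$ is not incident to the four separator edges, I will argue as follows. Take $y\in A$ incident with a separator edge, and map $y$ to~$x$ via some~$\psi$. The image side $\psi(A)$ is nested with~$A$ and contains~$x$. By minimality, $\psi(A)=A$ or $\psi(A)\supseteq$ the complement, and both are impossible since $x$ is incident to no boundary edge. Hence every vertex of~$A$ is an end of a separator edge, so $|A|\le 4$. Because separators of tetra-separations have size~4 and $G$ is 4-connected, hence has minimum degree at least~4, each vertex of~$A$ has at most one edge leaving~$A$. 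Each vertex therefore has at least $3$ neighbours inside~$A$, giving $|A|\ge 4$. So $|A|=4$ and $G[A]=K_4$.

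\textbf{Main obstacle.} The main difficulty is Step~1: correctly handling nestedness for tetra-separations with mixed separators, where "sides" include edge-ends. It also requires ensuring that minimal sides exist with the needed properness. If this gets messy, I would fall back on the canonical tetra-decomposition of \cite{TetraDecomp}, whose totally-nested separations form an $\Aut(G)$-invariant tree-decomposition. I would take a leaf bag and use transitivity in the same way.
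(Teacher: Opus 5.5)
Your proposal is correct and is essentially the paper's proof. Take a minimal member of the union of the $\Aut(G)$-orbits of $(A_1,A_2)$ and $(A_2,A_1)$. Use an automorphism to move a separator vertex into the interior of the minimal side, and then an end of a separator edge onto a vertex of that side meeting no separator edge; total nestedness then yields a strictly smaller member each time. A degree count forces $K_4$. Minimising in the order $\le$ on pairs, as the paper does, removes your ``properly inside'' worry at once, since the new separation differs from $(A,B)$ because $v'$ lies in its separator; and ``at most one edge leaving $A$'' should be justified by the matching-condition of tetra-separations, or by counting: each $a\in A$ sends at least $5-|A|$ edges out, but there are only four.
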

\begin{proof}
    Let $M$ be the union of the $\Aut(G)$-orbits of $(A_1,A_2)$ and $(A_2,A_1)$. Let $(C,D)$ be $\leq$-minimal in~$M$; that is, $(C',D') \not< (C,D)$ for every $(C',D') \in M$.

    We claim that $S(C,D)$ contains no vertex.
    Assume otherwise that $u\in S(C,D)$ is a vertex.
    Then let $v$ be an arbitrary vertex in $C \sm D$ and let $\phi \in \Aut(G)$ such that $\phi(u)=v$. 
    Then $(\varphi(C),\varphi(D))<(C,D)$ or $(\varphi(D),\varphi(C))<(C,D)$, contradicting the $\leq$-minimality of~$(C,D)$.
    Hence $S(C,D)$ consists of four edges.

    We claim that every vertex in $C$ is incident to an edge in~$S(C,D)$.
    Assume for a contradiction that $v\in C$ is not.
    Let $u\in C\sm D$ be an endvertex of one of the four edges in $S(C,D)$ and let $\phi \in \Aut(G)$ such that $\varphi(u)=v$.
    Then $(\varphi(C),\varphi(D))<(C,D)$ or $(\varphi(D),\varphi(C))<(C,D)$, contradicting the $\leq$-minimality of~$(C,D)$.
    
    Finally, since $G$ is 4-connected, every vertex has degree $\ge 4$, so $G[C]$ is a~$K_4$.
    The tetra-separation $(A_1,A_2)$ shares the above properties of $(C,D)$ since $(C,D)\in M$.
\end{proof}

\begin{corollary}
\label{cor:transitive-essentially-5-connected}
    Let $G$ be a vertex-transitive 4-connected finite graph such that every tetra-separation is totally-nested. Then~$G$ is essentially 5-connected.\qed
\end{corollary}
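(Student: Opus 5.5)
This corollary should follow directly from \cref{lem:transitive-totally-nested} together with the definition of essential 5-connectivity. That definition asks for three things: $G$ is 4-connected, every tetra-separation $(A_1,A_2)$ of~$G$ has only edges in its separator $S(A_1,A_2)$, and at least one side $A_i$ satisfies $G[A_i]=K_4$.

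The first condition is a hypothesis of the corollary. For the others, fix an arbitrary tetra-separation $(A_1,A_2)$ of~$G$. By assumption every tetra-separation of~$G$ is totally-nested, so $(A_1,A_2)$ is too. Since $G$ is also vertex-transitive, 4-connected and finite, \cref{lem:transitive-totally-nested} applies to $(A_1,A_2)$. It gives that $S(A_1,A_2)$ consists of four edges, and in particular contains no vertices. It also gives a side $A_i$ with $G[A_i]=K_4$. These are exactly the remaining two conditions, and as $(A_1,A_2)$ was arbitrary, $G$ is essentially 5-connected.

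There is no real obstacle. The only point to check is that the lemma's conclusion \emph{$S(A_1,A_2)$ consists of four edges} matches the definition's \emph{only edges in its separator}, and it does, since four edges with no vertex is in particular only edges. If $G$ has no tetra-separations at all, the condition holds vacuously and the argument still goes through.
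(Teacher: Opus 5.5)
Your proposal is correct and matches the paper's argument: the corollary is marked \qed because it follows by applying \cref{lem:transitive-totally-nested} to each tetra-separation, all of which are totally-nested by hypothesis. Your check that a separator consisting of four edges contains only edges is the one step the definition of essential 5-connectivity requires, and you made it.
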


\begin{lemma}
\label{lem:transitive-K4m}
    Let $G$ be a vertex-transitive 4-connected finite graph that has two tetra-separations $(A,B),(C,D)$ that cross with all links empty.
    Then~$G=K_{4,4}$, which is a cycle of $K_{2,2}$-bags of length~4.
\end{lemma}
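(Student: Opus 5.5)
We need to recall the tetra-decomposition terminology: tetra-separations $(A,B)$ are separations whose separator (mixed: vertices and edges) has size 4, with some conditions. Two crossing tetra-separations split into four corners, with links (the parts of the separators between corners) and a centre. "All links empty" means the separator pieces are concentrated in the centre; by the crossing lemma in \cite{TetraDecomp}, each corner then has separator of size exactly 4, and since links are empty, the centre consists of vertices/edges shared by both separators.

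The plan is to use the structure of crossing tetra-separations from \cite{TetraDecomp}. If $(A,B)$ and $(C,D)$ cross with all links empty, then the standard corner-size count shows that each corner separator has size at most $4$ and hence exactly $4$ by 4-connectivity. The centre then contains all four elements of both separators. So $S(A,B)=S(C,D)=:Z$ consists of at most four elements, and each of the four corners $A\cap C$, $A\cap D$, $B\cap C$, $B\cap D$ is separated from the rest by $Z$ alone. In particular, every corner is nonempty, as given by the crossing lemma (corners of crossing tetra-separations are non-empty, or at least their vertex sets minus $Z$ are). I would first argue that $Z$ consists of four vertices. If $Z$ contains an edge $e$, then $e$ has one end in only one corner, so this corner has a separator of size $<4$ in the remaining graph. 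This contradicts 4-connectivity or the definition of tetra-separation (tetra-separations require degree conditions on edge endpoints). So $Z=\{z_1,\dots,z_4\}$ is a 4-separator of $G$, and $G-Z$ has at least four components: at least one inside each corner.

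Next I use vertex-transitivity. Each $z_i$ has neighbours in every component of $G-Z$, since each component is a tight component: by 4-connectivity, $N(K)=Z$ for every component $K$. Choose a component $K$ of $G-Z$ with $|K|$ minimal among all components of all 4-separators of this kind. Mapping $z_1$ to a vertex $v\in K$ via an automorphism $\varphi$ gives the 4-separator $\varphi(Z)\ni v$ with at least four tight components. Each component other than the one containing $Z\setminus K$ lies in $K$, roughly forcing a strictly smaller component, unless $K$ is a single vertex. This is the step I expect to be the main obstacle: it requires showing that $\varphi(Z)\setminus K$ cannot spread, i.e.\ that at least one component of $G-\varphi(Z)$ sits properly inside $K$. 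I would handle it as in \cref{lem:transitive-totally-nested}, using that $\varphi(Z)$ meets $K$, so at most $|\varphi(Z)\cap (V\setminus K)|\le 3$ of its components can meet $Z$ or other components, and at least one lies inside $K\setminus\{v\}$. Hence every component is a single vertex of degree $4$, adjacent exactly to $Z$, so $G$ is $4$-regular.

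Finally, $G$ is 4-regular, every $z_i$ has degree $4$, and $G-Z$ has at least four singleton components each adjacent to all of $Z$. Hence $z_i$'s four neighbours are exactly these singletons, so there are exactly four components and $Z$ is independent. Thus $G=K_{4,4}$, with sides $Z$ and the four singletons. It remains to exhibit $K_{4,4}$ as a cycle of $K_{2,2}$-bags of length~4. Split each side into two pairs $\{a_1,a_2\},\{a_3,a_4\}$ and $\{b_1,b_2\},\{b_3,b_4\}$ and take the four bags $\{a_1,a_2,b_1,b_2\}$, $\{b_1,b_2,a_3,a_4\}$, $\{a_3,a_4,b_3,b_4\}$, $\{b_3,b_4,a_1,a_2\}$ in cyclic order. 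Each bag induces $K_{2,2}$, each torso is $K_4$ because the adhesion pairs get torso edges, and the adhesion sets are disjoint. This is the last item of the observation.
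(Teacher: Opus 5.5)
Your overall strategy (take the four-vertex centre $Z$, then combine a smallest component of $G-Z$ with the $\Aut(G)$-orbit of $Z$) is a reasonable and more elementary alternative to the paper's route. As written, however, it has a genuine gap.

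The minimality argument can at best show that a \emph{smallest} component of $G-Z$ is a single vertex~$x$. Your sentence ``Hence every component is a single vertex'' does not follow, and your final paragraph depends on it. From $N(x)=Z$ you do get the following: $G$ is 4-regular, $G-Z$ has exactly four components, and $Z$ is independent with every $z\in Z$ sending exactly one edge into each component. But a component $K_j$ with $|K_j|\ge 2$ is then the side of a 4-edge-cut, and nothing you have said excludes it; the count $e(G[K_j])=2|K_j|-2$ still permits, e.g., $G[K_j]=K_4$.

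The paper avoids minimality altogether. For \emph{any} component $K$ with $\ge 2$ vertices, it takes the left-right-reduction of $(V(K)\cup Z,V(G-K))$, which is totally-nested by \cite[Lemma~7.2]{TetraDecomp}. Then \cref{lem:transitive-totally-nested} forces $G[K]=K_4$, 4-regularity, and every vertex lying in a $K_4$, which fails at the vertices of~$Z$.

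Within your framework you could instead transport the picture at $x$ to some $z\in Z$ by vertex-transitivity. Then $N(z)=\{x,k_2,k_3,k_4\}$ with $k_i\in K_i$. Since $x$ has exactly one neighbour in each component of $G-N(z)$, the four vertices of $Z$ lie in distinct components of $G-N(z)$. So every component $D$ of $K_i-k_i$ has $N(D)\se\{k_i,z_j\}$ for a single~$j$, contradicting 4-connectivity.

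The justification of your ``main obstacle'' is also not right as stated. Counting does not bound the components of $G-\varphi(Z)$ that leave $K$ by $|\varphi(Z)\sm K|\le 3$. What it gives is this: each such component contains a neighbour of $v$, hence (as $N(K)=Z$) a vertex of $Z\sm\varphi(Z)$, so a priori there can be four of them.

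The bad case must be excluded separately. There, $\varphi(Z)\cap Z=\emptyset$ and the four vertices of $Z$ lie in four distinct components of $G-\varphi(Z)$. This forces $\varphi(Z)$ to consist of $v$ and one vertex from each of three other components of $G-Z$. You can rule it out because every component $D$ of $K-v$ would then satisfy $N(D)\se\{v,z_j\}$.

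Appealing to the proof of \cref{lem:transitive-totally-nested} does not help. That argument uses that a totally-nested separation is comparable with its images, and you know nothing about how $Z$ and $\varphi(Z)$ interact.

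A minor point: an edge in the centre joins two \emph{opposite} corners, and it is the other two corners whose separators become too small. It is simpler to cite the Crossing Lemma, as the paper does, for $Z$ consisting of four vertices and the corners being nonempty.
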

\begin{proof}
    Let~$Z$ be the centre of the crossing-diagram, which consists of four vertices by the Crossing Lemma~\cite[4.1]{TetraDecomp}.
    Note that~$G-Z$ has at least four components.

    We claim that every every component of $G-Z$ has only one vertex.
    Assume not, and let $K$ be a component of $G-Z$ with $\ge 2$ vertices.
    Let~$(U_K',W_K'):=(V(K) \cup Z, V(G-K))$ and let~$(U_K,W_K)$ be the left-right-reduction of~$(U_K',W_K')$.
    By \cite[Lemma~7.2]{TetraDecomp},~$(U_K,W_K)$ is totally-nested.
    By \cref{lem:transitive-totally-nested},~$S(U_K,W_K)$ consists of four edges, and either~$G[U_K]=K_4$ or~$G[W_K]=K_4$.
    Since~$G[W_K]$ contains two vertices from distinct components of $G-Z$ with no edge in between them, we must have~$G[U_K]=K_4$.
    By \cite[Lemma~7.1]{TetraDecomp}, we have~$U_K = V(K)$ and~$W_K = V(G - K)$.
    In particular,~$G$ is 4-regular and every vertex of~$U_K$ is in a~$K_4$. 
    By vertex-transitivity, so is every vertex of~$G$.
    It follows that there exists an edge between two vertices~$z_1$ and~$z_2$ in~$Z$. 
    But now,~$z_1$ and~$z_2$ have degree~5, contradicting 4-regularity of~$G$.
    Hence every component of $G-Z$ has only one vertex.

    By 4-connectivity of~$G$, each of the $\ge 4$ vertices of $G-Z$ has neighbourhood equal to~$Z$.
    So,~$G$ is 4-regular by vertex-transitivity.
    In particular,~$G-Z$ has exactly four vertices, and~$G[Z]$ is independent.
    So,~$G=K_{4,4}$.
\end{proof}

\begin{lemma}
\label{lem:transitive-block-bagel}
    Let $G$ be a vertex-transitive 4-connected finite graph that has two tetra-separations $(A,B),(C,D)$ that cross with all links of size~one. 
    Let~$\cO$ be the block-bagel induced by~$(A,B)$ and~$(C,D)$. 
    Then every bag of~$\cO$ is bad.
\end{lemma}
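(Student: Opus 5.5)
I do not know the precise definitions of block-bagel and bad bag from \cite{TetraDecomp}, so this plan rests on an assumption about them. I take them to mean the following. When two tetra-separations cross with all links of size one, the Crossing Lemma \cite[4.1]{TetraDecomp} gives a block-bagel: a cycle-decomposition $\cO$ of $G$ whose adhesion-sets have size two. Each bag of $\cO$ is either \emph{good} or \emph{bad}. A good bag I take to be one that is not a $K_4$, a $C_4$, a triangle or a $K_{2,2}$; equivalently, one whose torso is 4-connected enough that the bag determines a totally-nested tetra-separation, or a separation that can be reduced to one. The plan is a proof by contradiction: assume some bag $G_t$ is good, and play \cref{lem:transitive-totally-nested} against vertex-transitivity, in the same way the proof of \cref{lem:transitive-K4m} does.

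\textbf{Step 1: a totally-nested separation from the good bag.} Consider the separation
\[
(U',W') := \Bigl(V(G_t),\ \bigcup_{s\neq t} V(G_s)\Bigr).
\]
Its separator is the union of the two adhesion-sets at $t$, so it has order four. Let $(U,W)$ be its left-right-reduction. I expect to show that $(U,W)$ is a totally-nested tetra-separation, citing the appropriate lemma of \cite{TetraDecomp} on block-bagels. This should be the analogue of \cite[Lemma~7.2]{TetraDecomp}, which was used in \cref{lem:transitive-K4m}. Then \cref{lem:transitive-totally-nested} applies: $S(U,W)$ consists of four edges, and one side induces a $K_4$. It follows that $G$ is 4-regular and that every vertex lies in a $K_4$ whose four outgoing edges form a 4-edge-cut.

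\textbf{Step 2: deriving the contradiction.} There are two cases, according to which side induces the $K_4$.

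\begin{itemize}
\item If $G[U]=K_4$, the analogue of \cite[Lemma~7.1]{TetraDecomp} should give that $U$ is essentially $V(G_t)$. A bag inducing $K_4$ with four outgoing edges is exactly one of the excluded bad shapes, which contradicts the assumption that $G_t$ is good.
\item If instead $G[W]=K_4$, then the union of all the other bags is tiny. This is incompatible with the cycle having length at least four, which holds because both crossing separations yield at least four corner bags. It is also incompatible with the links being nonempty.
\end{itemize}

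I also plan to rule out a $K_4$ lying across an adhesion-set, as follows. By vertex-transitivity every vertex of $G$ lies in such a $K_4$. Apply this to a vertex of an adhesion-set, and use the structure of the link vertices, in the way the degree count was used in \cref{lem:transitive-K4m}. The resulting $K_4$ must sit inside a bag. This either contradicts 4-regularity, because link vertices gain an extra neighbour, or forces that bag to be bad.

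\textbf{Main obstacle.} The hardest step will be Step 1: proving that the separation coming from a good bag is, after reduction, really a totally-nested tetra-separation. This depends on exactly how \cite{TetraDecomp} defines good bags. If that route fails, I would fall back on a different argument. Take $(C,D)$ to be $\le$-minimal in the orbit of a separation from a good bag. Then apply an automorphism mapping a separator vertex into the interior of that bag, directly imitating the minimality argument in \cref{lem:transitive-totally-nested}.
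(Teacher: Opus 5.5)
Your Step 1 matches the paper. It uses the Block-bagel bag setting \cite[8.14]{TetraDecomp}, gets a totally-nested tetra-separation $(U,W)$ from \cite[Key Lemma~8.27]{TetraDecomp}, and applies \cref{lem:transitive-totally-nested}. Your picture of the block-bagel is off, though. Its bags are glued cyclically along single adhesion-vertices, and the two centre vertices $Z'$ of the crossing-diagram lie in no bag. The relevant separator is therefore $Z'$ together with the two adhesion-vertices at $t$, not two adhesion-sets of size two. Your $(U',W')$ is not a separation of $G$ at all.

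The real gap is Step 2. In the case $G[U]=K_4$ you conclude with ``a $K_4$ with four outgoing edges is one of the bad shapes''. In a block-bagel, however, bad bags are $K_2$'s and triangles; this is exactly how badness is used right afterwards in \cref{lem:transitive-no-K3}. So nothing in your case analysis excludes a good bag whose interior is a $K_4$ sending a perfect matching to its two adhesion-vertices and the two centre vertices. Your paragraph about $K_4$'s lying across adhesion-sets is too vague to close this. The fallback minimality argument only re-proves \cref{lem:transitive-totally-nested} and yields no contradiction by itself.

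The missing idea is to leave the good bag behind and globalise. Via vertex-transitivity, the outcome of \cref{lem:transitive-totally-nested} says that $G$ is 4-regular and every neighbourhood induces $K_1\sqcup K_3$. No case distinction on which side is the $K_4$ is needed. Apply this at a centre vertex $z'_1$. By the Helpful Lemma \cite[8.6]{TetraDecomp}, $O$ has at least four helpful edges or nodes, and 4-regularity makes it exactly four. So $N(z'_1)$ consists of one vertex per helpful position: the adhesion-vertex of each helpful edge, and the unique interior neighbour of $z'_1$ in each helpful bag. Three of these four vertices must form a triangle. Interior vertices of bags cannot lie in it, so it consists of three adhesion-vertices. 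That forces $O$ to be a triangle, contradicting the four helpful edges or nodes.
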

\begin{proof}
    Assume for a contradiction that $\cO=:(O,\cG)$ has a good bag~$G_t$.
    Then we have the Block-bagel bag setting~\cite[8.14]{TetraDecomp} with $t=t(0)$, which we assume in the following.
    By \cite[Key Lemma~8.27]{TetraDecomp}, $(U,W)$ is a totally-nested tetra-separation of~$G$.
    By \cref{lem:transitive-totally-nested}, the separator $S(U,W)$ consists of four edges, and either~$G[U]=K_4$ or~$G[W]=K_4$.
    It follows with vertex-transitivity that $G$ is 4-regular and that the neighbourhood around each vertex induces $K_1\sqcup K_3$ in~$G$.

    By the Helpful Lemma~\cite[8.6]{TetraDecomp}, $O$ has at least four helpful edges or vertices.
    By 4-regularity, there are exactly four helpful edges or nodes of~$O$.
    For each helpful edge, we colour its adhesion-vertex red.
    For each helpful node~$t$, there exists a unique interior vertex of $G_t$ that is a neighbour of $z'_1$ (by 4-regularity), and we colour this vertex red.
    Then exactly four vertices have been coloured red.
    By the above, the red vertices induced a $K_1\sqcup K_3$ in~$G-Z'$, where $Z'$ is the centre of the crossing-diagram.
    None of the red vertices in the $K_3$ can be an interior vertex of a bag, so all three are adhesion-vertices.
    But then $O$ would have to be a triangle, contradicting that $O$ has four helpful edges or nodes.
\end{proof}

\begin{lemma}
\label{lem:transitive-no-K3}
    Let $G$ be a vertex-transitive 4-connected finite graph that has two tetra-separations $(A,B),(C,D)$ that cross with all links of size~one. 
    Let $\cO$ be the Block-bagel induced by $(A,B)$ and $(C,D)$. 
    Then every bag of~$\cO$ is a~$K_2$.
\end{lemma}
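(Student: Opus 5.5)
By \cref{lem:transitive-block-bagel}, every bag of~$\cO$ is bad. So it remains to exclude the bad bags that are not~$K_2$'s. I expect the relevant definition in \cite{TetraDecomp} to say that a bag~$G_t$ is bad if it is a $K_2$ or a triangle, possibly together with some further degenerate cases. The plan is therefore to assume that some bag $G_t$ is not a $K_2$, so that (by badness) it is a triangle, and to derive a contradiction from vertex-transitivity.

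\textbf{Step 1: if one bag is a triangle, every bag is a triangle.} Assume $G_t$ is a triangle. Its two adhesion-vertices are then joined by an edge, and its third vertex $x$ is an interior vertex of the bag. The vertex $x$ has only two neighbours inside the bag, so by 4-connectivity it must have at least two neighbours in the centre $Z'$ of the crossing-diagram. I would first compare the local structure at $x$ with that at an adhesion-vertex of~$\cO$, using that an automorphism maps $x$ to any adhesion-vertex. Every vertex of~$G$ lies either in $Z'$ or in some bag, and the bags are cyclically arranged along~$O$. Counting degrees, the interior vertex $x$ has degree $2+|N(x)\cap Z'|$, while an adhesion-vertex has degree at least~$2$ within the cycle, plus its neighbours in~$Z'$. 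From this comparison I aim to show that if one bag is a triangle then all bags are triangles.

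\textbf{Step 2: build a totally-nested tetra-separation, or contradict the Helpful Lemma.} Given that all bags are triangles, each vertex of a bag sends edges into $Z'$. The Helpful Lemma~\cite[8.6]{TetraDecomp} caps the number of helpful nodes and edges, as it did in the proof of \cref{lem:transitive-block-bagel}. Two routes are available from here.

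\emph{Route (a).} Take a triangle bag $G_t$ together with its neighbourhood structure. Separating it off should give a tetra-separation whose separator consists of the edges from $G_t$ towards the rest of the graph. If this separation is totally-nested, then \cref{lem:transitive-totally-nested} forces its small side to induce a~$K_4$, which contradicts the fact that this side is a triangle.

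\emph{Route (b).} Alternatively, vertex-transitivity forces $G$ to be $d$-regular. Every interior vertex has exactly two neighbours in its bag, and each vertex of $Z'$ is adjacent to many bag-vertices, roughly one for each helpful node or edge. Comparing the degree of a vertex in $Z'$ with the degree $d$ should give a contradiction, since the number of nodes of $O$ is large compared with the degree.

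\textbf{Main obstacle.} The hard part is Step~2: getting the exact list of bad bags and the precise adjacency between bags and $Z'$ from \cite{TetraDecomp}, in particular the definitions of helpful nodes and edges and which centre vertices $z'_i$ the bag vertices attach to. I would try the degree-counting of Route~(b) first, mirroring the red-colouring argument in the proof of \cref{lem:transitive-block-bagel}. There, each helpful node contributes one neighbour of $z'_1$. If all bags are triangles, then every node should be helpful, or else its interior vertex has too small a degree. This would make the degree of $z'_1$ at least the length of~$O$, while the Helpful Lemma together with regularity caps it at~$4$. That cap would force $O$ to be short, and the short cases can then be ruled out by hand.
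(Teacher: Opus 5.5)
\textbf{Verdict: genuine gap.} Your Route~(b) matches the paper's skeleton up to the bound $|O|\le 4$. However, the two steps that carry the proof are either unsupported or predicted to work in a way that fails.

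\textbf{Step 1.} The paper gets ``one triangle bag forces all bags to be triangles'' for free. By \cref{lem:transitive-block-bagel} every bag is bad, and badness itself rules out mixing triangle bags with $K_2$ bags. Your substitute, comparing degrees and local structure at interior and adhesion vertices under vertex-transitivity, cannot prove this. Take $K_4\square K_2$ with $K_4$'s on $\{u,x_1,p_1,p_2\}$ and $\{v,x_3,p_3,p_4\}$ and matching $ux_3,vx_1,p_1p_4,p_2p_3$. Then $G-\{u,v\}$ is a ring of bags: triangle $p_1p_2x_1$, edge $p_2p_3$, triangle $p_3p_4x_3$, edge $p_4p_1$. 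Each $x_i$ is joined to both centre vertices, and each $p_i$ to exactly one. The graph is 4-connected, 4-regular and vertex-transitive. So the mixed configuration is consistent with everything your comparison uses. It can only be excluded with block-bagel-specific input, i.e.\ that the ring comes from two crossing tetra-separations, and the paper packages that input into the notion of a bad bag.

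\textbf{The end of Route~(b).} Once all bags are triangles, each interior vertex $x$ has exactly four neighbours: its two bag-neighbours and both centre vertices $u,v$. So $G$ is 4-regular and $|O|\le \deg(u)=4$. Since every node is helpful, the Helpful Lemma gives $|O|\ge 4$, hence $|O|=4$. But here the degree count balances exactly: four triangles $p_ip_{i+1}x_i$ in a ring, with $u$ and $v$ both joined to $x_1,\dots,x_4$, form a 4-regular graph on ten vertices. So your expectation that comparing the degree of a centre vertex with $d$ yields a contradiction fails in the only remaining case, which you leave ``to be ruled out by hand''. The missing idea is a non-degree invariant. By 4-regularity $uv\notin E(G)$, so $N(u)=\{x_1,\dots,x_4\}$. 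This set is independent, because interior vertices of distinct bags are non-adjacent. Hence $u$ lies in no triangle while every bag vertex does, contradicting vertex-transitivity.

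\textbf{Route~(a).} This is not a fallback. A triangle bag has only one interior vertex, so cutting it off gives either a side of size one or an edge-cut with six edges, not a tetra-separation. That is precisely why such a bag is bad, so \cref{lem:transitive-totally-nested} never applies.
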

\begin{proof}
    Write $\cO =: (O, (G_t)_{t \in V(O)})$, and let $u,v$ be the two vertices in the centre.
    By \cref{lem:transitive-block-bagel}, every bag of~$\cO$ is bad.
    In particular, every bag of~$\cO$ is a $K_2$ or a triangle.
    Assume for a contradiction that $\cO$ has a bag that is a triangle.
    Since every bag of $\cO$ is bad, it follows that all bags of $\cO$ are triangles.

    Let $G_t$ be an arbitrary bag of $\cO$, and let $x$ be the vertex in~$G_t$ that is not an adhesion-vertex. 
    By 4-connectivity of~$G$, we have~$x u, x v \in E(G)$ and~$x$ has degree~4.
    Hence~$G$ is 4-regular, and since $G_t$ was arbitrary, we get $|O|\le 4$.
    As every node of $O$ is helpful, we get $|O|\ge 4$ from the Helpful Lemma~\cite[8.6]{TetraDecomp}, so $O$ has length~4.
    Hence each of $u,v$ has four neighbours, one in the interior of each bag, so by 4-regularity the edge $uv$ cannot exist in~$G$.
    Therefore, $u$ is not contained in a triangle.
    But all bags are triangles, contradicting vertex-transitivity.
\end{proof}

\begin{lemma}
\label{lem:transitive-double-wheel}
    Let $G$ be a vertex-transitive 4-connected finite graph that has two tetra-separations $(A,B),(C,D)$ that cross with all links of size one. Then~$G$ is an octahedron, which is a cycle of $K_{2,2}$-bags of length~3.
\end{lemma}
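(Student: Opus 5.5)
Let $\cO=(O,(G_t)_{t\in V(O)})$ be the block-bagel induced by $(A,B)$ and $(C,D)$, and let $u,v$ be the two vertices in its centre (the centre of the crossing-diagram). By \cref{lem:transitive-no-K3}, every bag of~$\cO$ is a~$K_2$. The plan is first to show that $G-\{u,v\}$ is exactly the cycle formed by these $K_2$-bags, then to show that $u$ and $v$ are adjacent to every vertex of that cycle, and finally to use regularity to pin down the length of the cycle and exclude the edge $uv$.

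First, since all bags are $K_2$'s glued along single adhesion-vertices, the vertices of $G-\{u,v\}$ outside the centre form a cycle~$C$: its vertices are the adhesion-vertices, and its edges are the bag-edges. This assumes, as I expect from the block-bagel construction in \cite{TetraDecomp}, that every vertex of $G-\{u,v\}$ lies in some bag; I would double-check that point there. Every vertex $x$ of $C$ then has exactly two neighbours on~$C$. Moreover, $x$ together with $u$ and $v$ separates its two neighbours on $C$ along~$O$. Hence 4-connectivity forces $\deg(x)\ge 4$, and the only other possible neighbours of $x$ are $u$ and~$v$. So $xu,xv\in E(G)$ and $\deg(x)=4$ for every $x\in V(C)$.

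By vertex-transitivity, $G$ is therefore 4-regular. Now $u$ is adjacent to all $|C|$ vertices of~$C$, so $|C|\le 4$, and $|C|=4$ exactly when $uv\notin E(G)$. The Helpful Lemma~\cite[8.6]{TetraDecomp} gives $|O|\ge 4$, and with $K_2$-bags the length of $O$ equals~$|C|$, so $|C|=4$. Thus $uv\notin E(G)$, and $G$ consists of a 4-cycle $C$ together with two nonadjacent vertices each joined to all of~$C$. This is the octahedron $K_{2,2,2}$.

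Finally, I would verify that the octahedron is a cycle of $K_{2,2}$-bags of length~3. For this, take three disjoint antipodal pairs as adhesion-sets; consecutive pairs then span $K_{2,2}$-bags whose torsos are~$K_4$. This matches the observation stated earlier in the paper.

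The main obstacle is the structural claim that $G-\{u,v\}$ is precisely the union of the bags with no further edges, that is, extracting from the block-bagel setting of \cite{TetraDecomp} that the adhesion-vertices of non-adjacent bags are non-adjacent. I would try to derive this directly from the definition of the block-bagel as a cycle-decomposition of $G-\{u,v\}$ with adhesion-sets of size one.
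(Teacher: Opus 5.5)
Your proof is correct and follows the paper's route (bags are $K_2$'s by \cref{lem:transitive-no-K3}, the Helpful Lemma gives $|O|\ge 4$, and a degree count gives 4-regularity). The one difference is that where the paper cites \cite[Lemma~8.7]{TetraDecomp} to get $|O|=4$ and $uv\notin E(G)$, you derive this directly from $u$ being adjacent to every vertex of the cycle. Your ``main obstacle'' is not one, because the block-bagel is a cycle-decomposition of $G-\{u,v\}$, so every vertex and edge of $G-\{u,v\}$ lies in a bag; the paper relies on exactly this when it bounds degrees by~4. Also, $\{x,u,v\}$ does not separate the two cycle-neighbours of $x$, since the path $C-x$ joins them, but you do not need that claim: $\deg(x)\ge 4$ follows directly from 4-connectivity.
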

\begin{proof}
    By \cref{lem:transitive-no-K3}, every bag of the block-bagel $\cO=(O,\cG)$ induced by $(A,B)$ and $(C,D)$ is a~$K_2$.
    Hence every edge of $O$ is helpful, so $|O|\ge 4$ by the Helpful Lemma~\cite[8.6]{TetraDecomp}. 
    Since each vertex of $G$ contained in $\bigcup_{t \in V(O)} G_t$ has degree $\le 4$ in~$G$, the graph~$G$ is 4-regular. 
    Let $u,v$ be the vertices in the centre. 
    By~\cite[Lemma~8.7]{TetraDecomp} it follows that $|O|=4$ and $uv \notin E(G)$. 
    Then~$G$ is an octahedron.
\end{proof}

\begin{lemma}
\label{lem:transitive-tutte-good-bag}
    Let $G$ be a vertex-transitive 4-connected finite graph that has two tetra-separations $(A,B),(C,D)$ that cross with all links of size two. 
    Let~$\cO$ be the Tutte-bagel induced by~$(A,B)$ and~$(C,D)$.
    Let $H_t$ be a good torso of $\cO$.
    Then the bag $G_t$ is a $K_4$ and its two neighbouring torsos are 4-cycles.
\end{lemma}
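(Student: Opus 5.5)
Following the proof of \cref{lem:transitive-block-bagel}, I will first turn the good torso into a totally-nested tetra-separation and then use \cref{lem:transitive-totally-nested} to pin down local structure.

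\textbf{Setup.} Write $\cO=(O,(G_t)_{t\in V(O)})$. Since $H_t$ is good, we are in the Tutte-bagel bag setting of \cite{TetraDecomp} at the node $t=t(0)$. Here the two adhesion-sets $\pi_1,\pi_2$ of size two at $t$ give a separation $(U,W)$ with $U$ roughly $V(G_t)$, possibly after left-right-reduction. The key lemma for Tutte-bagels in \cite{TetraDecomp}, the analogue of \cite[Key Lemma~8.27]{TetraDecomp}, should yield that $(U,W)$ is a totally-nested tetra-separation. By \cref{lem:transitive-totally-nested}, $S(U,W)$ then consists of four edges and one side induces a $K_4$.

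\textbf{The $K_4$ side is $G_t$.} The other side contains all the remaining bags of the Tutte-bagel. As $O$ has length at least~$4$, that side has more than four vertices and cannot be a $K_4$. Hence $G[U]=K_4$.

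Since the separator consists of edges, the vertices of $\pi_1\cup\pi_2$ cannot lie in the separator as vertices. So the reduction moves them to the $W$ side, or the bag $G_t$ itself equals $U$ with four separator edges leaving it. I would check which of these the bagel setting forces, and conclude that $G_t$ is a $K_4$ whose four vertices each send exactly one edge out. The adhesion-sets at $t$ are then disjoint pairs inside this $K_4$ (or adjacent to it), and $G$ is $4$-regular. By vertex-transitivity every vertex lies in a unique $K_4$ and has exactly one neighbour outside it, so $N(v)$ induces $K_3\sqcup K_1$.

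\textbf{The neighbouring torsos are $4$-cycles.} Let $s$ be a neighbour of $t$ on $O$. Its bag $G_s$ contains the adhesion-pair $\pi$ shared with $G_t$, and the two vertices of $\pi$ each have their unique outside edge going into $G_s$ or across to the next adhesion-set.

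I would split into cases according to whether $H_s$ is good or bad.

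\emph{$H_s$ is good.} Then the previous step shows $G_s$ is also a $K_4$. The vertices of $\pi$ would then lie in two $K_4$'s or have degree larger than $4$, which contradicts $4$-regularity or the uniqueness of the $K_4$ containing each vertex.

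\emph{$H_s$ is bad.} Bad torsos in a Tutte-bagel are small, namely cycles or $K_4$'s arising from $K_{2,2}$-bags. A vertex of $\pi$ has degree~$3$ inside $G_t$ and only one further edge. Also, the $K_3\sqcup K_1$ neighbourhood structure forbids any vertex of $G_s$ outside the adhesion-sets from lying in a triangle unless it lies in a $K_4$. Together these should force $H_s$ to be a $4$-cycle on the two adjacent adhesion-pairs, each connected by a single edge.

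\textbf{Main obstacle.} The hard part is the first step: identifying exactly which separation from the Tutte-bagel setting is totally-nested, and how the reduction interacts with the adhesion-vertices. This determines whether $G_t$ itself, rather than a reduced version of it, is the $K_4$. The bad-torso case analysis should be routine by comparison.
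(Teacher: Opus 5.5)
Your opening step agrees with the paper. In the Tutte-bagel torso setting at $t=t(0)$, \cite[Key Lemma~9.41]{TetraDecomp} makes $(U,W)$ a totally-nested tetra-separation. Then \cref{lem:transitive-totally-nested} shows that $S(U,W)$ consists of four edges and that one side induces a $K_4$. However, the two steps that carry the content of the lemma are either wrong or missing.

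First, your reason for $G[W]\neq K_4$ fails. After the left-right-reduction, $W$ is not ``all remaining bags'': an adhesion vertex at $t$ whose separator element is an outward edge lies in $U$. For example, take $|O|=4$, let the bag $G_s$ opposite $t$ be a $K_4$, and let the two torsos in between be $4$-cycles. Then $W=V(G_s)$ has only four vertices, although $|O|\ge 4$. The paper excludes $G[W]=G_s$ with a global input. This configuration forces $|O|\le 4$, exactly two 3-connected torsos, and all other torsos triangles or $4$-cycles (\cite[Lemma~9.14]{TetraDecomp}). That contradicts $\alpha(\cO)\ge 4$ (\cite[Lemma~9.51]{TetraDecomp}).

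Second, and this is the heart of the proof, $G[U]=K_4$ does not yet say anything about $G_t$. The side $U$ consists of the interior of $G_t$ together with exactly those adhesion vertices at $t$ whose element of $S(U,W)$ is an outward edge. An adhesion vertex whose element is an inward edge lies in $W\setminus U$. This can differ from vertex to vertex, so your dichotomy (``moved to $W$'' or ``$U=V(G_t)$'') is not exhaustive. What must be shown is that $S(U,W)$ contains no inward edge, and you leave exactly this as ``I would check''.

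The paper argues as follows.
\begin{itemize}
\item Suppose $x_1$ is an inward edge. Then \cite[Lemma~9.33]{TetraDecomp} gives a neighbour $y_1'$ of $x_1'$ among the adhesion vertices at $t$ with $y_1^W=y_1'$.
\item So $y_1$ is not outward. Since $S(U,W)$ consists of edges, $y_1$ is inward, and by the matching-condition it ends in a different interior vertex of $G_t$ than $x_1$ does.
\item This produces three pairwise non-adjacent neighbours of $x_1'$. That is impossible, because $G$ is $4$-regular and, by vertex-transitivity, every $N(v)$ induces $K_3\sqcup K_1$.
\end{itemize}

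Once all four elements are outward edges, $U=V(G_t)$ and so $G_t=K_4$. Each vertex of an adhesion-set at $t$ then has degree~$2$ in the neighbouring torso. That torso is therefore not 3-connected, hence a cycle, and it is a $4$-cycle by $4$-connectivity and the matching-condition. This single degree observation handles every neighbouring torso at once, so your good/bad case split for $H_s$ is unnecessary. Your description of bad torsos is also inaccurate: they can be 3-connected on five vertices, as in \cref{lem:transitive-cube}.
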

\begin{proof}
    We may assume the Tutte-bagel torso setting~\cite[9.13]{TetraDecomp} with $t=t(0)$.
    By~~\cite[Key Lemma~9.41]{TetraDecomp},~$(U,W)$ is a totally-nested tetra-separation.
    By \cref{lem:transitive-totally-nested}, $S(U,W)$ consists of four edges, and we have either~$G[U]=K_4$ or~$G[W]=K_4$.

    We claim that $G[U]= K_4$.
    Assume otherwise, so $G[W]=K_4$.
    Then $G[W]=G_s$ for some bag $G_s\neq G_t$.
    Hence $O$ has length $\le 4$, two 3-connected bags (namely $G_s,G_t$), and all other bags have triangles or 4-cycles as torsos by~\cite[Lemma~9.14]{TetraDecomp}.
    But then $\alpha(\cO)<4$ contradicts~\cite[Lemma~9.51]{TetraDecomp}.

    Hence $G[U]=K_4$.
    Next, we claim that $S(U,W)$ contains no inward edge, and assume for a contradiction that $x_1\in S(U,W)$ is an inward edge with endvertex $u$ in the interior of~$G_t$.
    By~\cite[Lemma~9.33]{TetraDecomp}, some $y'_i$, say $y'_1$, is a neighbour of $x'_1$ and $y^W_1=y'_1$.
    In particular, $y_1$ cannot be an outward edge.
    Since $S(U,W)$ consists of four edges and $y_1$ is one of them, $y_1$ must be an inward edge.
    Let $u$ be the endvertex of $y_1$ in the interior of~$G_t$.
    Then $u\neq v$ by the matching-condition for $(U,W)$.
    Let $w$ be a neighbour of $x'_1$ in $G$ but outside of~$G_t$.
    Then the vertex-set $\{u,v,w\}$ is independent in~$G$ and consists entirely of neighbours of $x'_1$.
    Since $G[U]=K_4$ and $S(U,W)$ consists of four edges, $G$ is 4-regular and every vertex lies in a $K_4$ by vertex-transitivity.
    But $\{u,v,w\}$ witnesses that $x'_1$ does not lie in a~$K_4$, a contradiction.
    Therefore, $S(U,W)$ contains no inward edge.

    So all edges in $S(U,W)$ are outward edges.
    Hence $G_t=K_4$ and both neighbouring torsos are 4-cycles.
\end{proof}

\begin{lemma}
\label{lem:transitive-tutte-bagel}
    Let $G$ be a vertex-transitive 4-connected finite graph that has two tetra-separations $(A,B),(C,D)$ that cross with all links of size~two. 
    Let~$\cO$ be the Tutte-bagel induced by~$(A,B)$ and~$(C,D)$.
    If some torso of $\cO$ is good, then~$G$ is a cycle alternating between $K_4$-bags and $C_4$-bags of length~$\ge 8$.
\end{lemma}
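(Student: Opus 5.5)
We start from a good torso~$H_t$ of~$\cO=(O,(G_t)_{t\in V(O)})$. By \cref{lem:transitive-tutte-good-bag}, the bag~$G_t$ is a~$K_4$ and both neighbouring torsos of~$H_t$ are 4-cycles. The plan is to propagate this local picture around the whole cycle~$O$, using vertex-transitivity and the regularity it forces.

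\textbf{Regularity and local structure.} Inside the proof of \cref{lem:transitive-tutte-good-bag}, the separator $S(U,W)$ consists of four outward edges and $G[U]=G_t=K_4$. Each vertex of~$G_t$ therefore has three neighbours in~$G_t$ and exactly one edge leaving~$G_t$. So~$G$ is 4-regular, and by vertex-transitivity the neighbourhood of every vertex induces $K_1\sqcup K_3$; in particular every vertex lies in a unique~$K_4$.

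\textbf{Propagation around~$O$.} Let~$s$ be a neighbour of~$t$ on~$O$, so that~$H_s$ is a 4-cycle. Its bag~$G_s$ consists of the two adhesion-sets towards its neighbours on~$O$ (which we show are disjoint), plus possibly some edges. The vertices of~$G_s$ must lie in~$K_4$'s, but a 4-cycle torso cannot contain a~$K_4$, and a vertex of~$G_s$ already uses one edge into~$G_t$. So each vertex of~$G_s$ has its~$K_4$ in the next bag. Using \cite[Lemma~9.14]{TetraDecomp}, every torso is 3-connected, a triangle, or a 4-cycle. We then rule out triangle torsos: a triangle torso would have three adhesion vertices with only two distinct adhesion-sets, and this is incompatible with every vertex lying in a~$K_4$ with exactly one external edge. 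We also rule out 3-connected bags other than~$K_4$: such a bag, with the outward edges, would give a tetra-separation to which \cref{lem:transitive-totally-nested} applies, forcing a~$K_4$ side. Hence the next bag~$G_{s'}$ is again a~$K_4$ whose four vertices each have one external edge, which makes~$H_{s'}$ good. Applying \cref{lem:transitive-tutte-good-bag} to~$H_{s'}$ and inducting, the torsos alternate between~$K_4$'s and 4-cycles. Moreover the adhesion-sets are disjoint, since a shared adhesion vertex would lie in two~$K_4$'s or have degree~$>4$. We still need to check that the~$C_4$-torsos are actual~$C_4$-bags, i.e.\ that the adhesion-sets induce edges in~$G$. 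For this we count degrees: each vertex of a~$C_4$-bag has one edge into a~$K_4$ and needs three more, and we show these come from a~$K_4$ on the other side together with the cycle edges. If that fails, we instead match the structure to the figure using the $K_1\sqcup K_3$ neighbourhood condition.

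\textbf{Length.} Alternation forces~$|O|$ to be even. Since~$\cO$ is a Tutte-bagel from two crossing tetra-separations with links of size two, $|O|\ge 4$, and the length-$4$ and length-$6$ cases are essentially 5-connected by the Observation. We exclude them either directly from \cite[Lemma~9.51]{TetraDecomp} (the bound on~$\alpha(\cO)$, as in \cref{lem:transitive-tutte-good-bag}) or by noting that the crossing separations~$(A,B),(C,D)$ would not exist in a graph that short. This gives length~$\ge 8$.

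\textbf{Main obstacle.} The hardest step is the propagation: ruling out triangle torsos and ensuring that every~$K_4$ bag's torso is good, so that \cref{lem:transitive-tutte-good-bag} can be reapplied. This will require care with the precise Tutte-bagel definitions from \cite{TetraDecomp}.
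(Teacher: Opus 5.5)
Your skeleton matches the paper's: start at a good torso, apply \cref{lem:transitive-tutte-good-bag}, propagate around $O$ by induction, and use \cite[Lemma~9.51]{TetraDecomp} to get at least four $K_4$-bags and hence length $\ge 8$. The propagation step as you describe it does not go through, however.

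\textbf{The circular step.} To show that the bag $G_{s'}$ two steps from $t$ is a $K_4$, you form a tetra-separation from $G_{s'}$ and its outward edges and apply \cref{lem:transitive-totally-nested}. That lemma needs the tetra-separation to be totally-nested, and nothing in your argument provides this. In the Tutte-bagel setting, total-nestedness of $(U,W)$ comes from \cite[Key Lemma~9.41]{TetraDecomp}, which is available only for \emph{good} torsos. So you would already need $H_{s'}$ to be good, which is exactly what you try to deduce afterwards from $G_{s'}=K_4$. The argument is circular. It is also unclear that your separation even has order four, since you do not yet know the torso beyond $s'$. Finally, ``a $K_4$-bag whose vertices each have one external edge'' is not what makes a torso good.

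\textbf{The missing idea.} By \cite[Lemma~9.4]{TetraDecomp}, the torsos neighbouring a 4-cycle torso are 3-connected. A 3-connected torso that neighbours a 4-cycle torso is good. So $H_{s'}$ is good without knowing anything about its bag, and \cref{lem:transitive-tutte-good-bag} applied to $H_{s'}$ directly gives $G_{s'}=K_4$ with both neighbouring torsos 4-cycles. Induction then yields the alternation and even length.

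\textbf{What becomes unnecessary.} Your vague exclusion of triangle torsos is no longer needed, nor is the implicit exclusion of a second 4-cycle torso. Your worry about whether the $C_4$-torsos are genuine $C_4$-bags also disappears: every adhesion-set lies in one of the $K_4$-bags, so it is spanned by an edge of $G$.
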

\begin{proof}
    Write $\cO =: (O,(G_t)_{t \in V(O)})$.
    Write $O=t(0)\,t(1)\ldots t(m)\, t(0)$ with indices in~$\Z_{m+1}$.
    By \cref{lem:transitive-tutte-good-bag}, if a torso such as $H_{t(0)}$ is good, then its bag is a~$K_4$ and the two neighbouring torsos $H_{t(-1)}$ and $H_{t(1)}$ are 4-cycles.
    Then, by~\cite[Lemma~9.4]{TetraDecomp}, the torsos $H_{t(-2)}$ and $H_{t(2)}$ are 3-connected.
    Hence, both of these torsos are good as they neighbour torsos that are 4-cycles.
    By induction, we get that~$O$ has even length, that bags $G_{t(i)}$ are $K_4$'s for even~$i$, and that torsos $H_{t(i)}$ are 4-cycles for odd~$i$.
    By~\cite[Lemma~9.51]{TetraDecomp}, there are $\ge 4$ bags that are~$K_4$'s.
    It follows that~$G$ is a cycle alternating between $K_4$-bags and $C_4$-bags of length~$\ge 8$.
\end{proof}

\begin{lemma}
\label{lem:tutte-bagel-wheel}
    Let $\cO$ be a Tutte-bagel of a 4-connected graph.
    Let $G_t$ be a bag of $\cO$ on five vertices.
    Let $\pi,\pi'$ be the two adhesion-sets at~$G_t$, and let $v$ be the interior vertex of~$G_t$.
    Then the bag $G_t$ contains a full matching between $\pi$ and~$\pi'$, and $v$ sends edges to all four vertices in $\pi\cup\pi'$.
\end{lemma}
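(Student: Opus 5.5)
We exploit 4-connectivity of $G$ together with the structure of a Tutte-bagel: its adhesion-sets $\pi,\pi'$ each have size two, and they are disjoint (consecutive adhesion-sets of a Tutte-bagel are disjoint, since all links of the underlying crossing have size two). Since $|V(G_t)|=5$ and $|\pi\cup\pi'|=4$, there is exactly one interior vertex~$v$. Every neighbour of $v$ in $G$ lies in $G_t$, because interior vertices of a bag only have neighbours in that bag. So $N(v)\subseteq \pi\cup\pi'$. As $G$ is 4-connected, $v$ has degree at least~4. Hence $N(v)=\pi\cup\pi'$, which is the second assertion.

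For the matching, I would apply Menger's theorem to the sets $\pi$ and $\pi'$. We need two disjoint $\pi$--$\pi'$ paths in $G_t$ that avoid~$v$; each such path must then be a single edge, since $G_t-v$ consists only of $\pi\cup\pi'$. Suppose for a contradiction that $G_t-v$ contains no perfect matching between $\pi$ and $\pi'$. By König's theorem applied to the bipartite graph of $\pi$--$\pi'$ edges, there is a single vertex $w\in\pi\cup\pi'$ covering all $\pi$--$\pi'$ edges of $G_t$ (possibly there are none at all).

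Consider the set $X=\{v,w\}$ together with a suitable further vertex and show it separates $G$ too cheaply. The cleanest route is to look at the two sides of $G$ at $G_t$. Let $L$ be the part of $G$ on the $\pi$-side of the bagel, outside $G_t$, and $R$ the part on the $\pi'$-side. Because the decomposition is a cycle, $L$ and $R$ are joined through the rest of the bagel. So I instead use the bagel's cycle structure: removing $G_t$'s interior cuts the cycle into a path.

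The right move, which I expect to be the main obstacle, is to find a separation of order at most three. Write $\pi=\{a,b\}$ and $\pi'=\{c,d\}$, and suppose $w=a$ covers all $\pi$--$\pi'$ edges. Then in $G-\{a,v\}$ the vertex $b$ reaches $\{c,d\}$ only by going around the rest of the cycle. I would then pick a vertex from a far adhesion-set, or use the adhesion structure of the neighbouring bags, to get a separator of size at most~3 separating $b$ from $c$. That contradicts 4-connectivity.

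To make this rigorous, I would invoke the defining properties of Tutte-bagels from \cite[§9]{TetraDecomp}: torsos are 3-connected, cycles, or 4-cycles, and the matching-type properties of their adhesions (compare \cite[Lemma~9.14]{TetraDecomp}). Concretely, the torso $H_t$ is $G_t$ plus the torso edges $ab$ and $cd$. If $H_t$ is 3-connected, then $H_t-\{a,v\}$ is connected, which forces a $b$--$\{c,d\}$ edge, so $b$ is adjacent to $c$ or $d$. Symmetrically, $a$ is adjacent to $c$ or $d$. Hall's condition then fails only if $a$ and $b$ have the same unique neighbour, say~$c$; but then $\{c,v\}$ together with the edge $ab$ would separate $d$ in $H_t$, contradicting 3-connectivity.

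If instead $H_t$ were a cycle, $v$ would have degree~2, which is impossible by the first paragraph. So only the 3-connected case remains, and the argument is complete once we confirm that the five-vertex torso case of Tutte-bagels is 3-connected. That confirmation is the step I would check most carefully against \cite[Lemma~9.14]{TetraDecomp}.
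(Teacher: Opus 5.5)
Your final argument is correct and is essentially the paper's proof: $v$ gets all four neighbours from 4-connectivity, and the matching comes from the 3-connectivity of the five-vertex torso $H_t$, whose only non-bag edges lie inside $\pi$ and inside $\pi'$. The paper reads the matching off the list of 3-connected five-vertex graphs (the 4-wheel, $K_5^-$, $K_5$), whereas you check Hall's condition directly. Two minor points: the Menger/far-adhesion-set exploration in your middle paragraphs is a dead end and can be deleted, and your blanket claim that consecutive adhesion-sets of a Tutte-bagel are disjoint is false in general (a triangle torso puts its tip in both), though $|\pi\cup\pi'|=4$ is presupposed by the statement here.
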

\begin{proof}
    Since $G$ is 4-connected, $v$ sends edges to all four vertices in $\pi\cup\pi'$.
    Since the torso $H_t$ is 3-connected with $|H_t|=5$, it is either a $4$-wheel, a $K_5^-$ or a $K_5$.
    In each case, $H_t$ contains a full matching between $\pi$ and $\pi'$, and both edges in the matching also exist in the bag~$G_t$.
\end{proof}

\begin{lemma}
\label{lem:transitive-cube}
    Let $G$ be a vertex-transitive 4-connected finite graph that has two tetra-separations $(A,B),(C,D)$ that cross with all links of size two. 
    Let~$\cO$ be the Tutte-bagel induced by~$(A,B)$ and~$(C,D)$.
    Assume that $\cO$ has a torso on five vertices.
    Then~$G$ is the line graph of the cube.
\end{lemma}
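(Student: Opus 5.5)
Let $\cO=(O,(G_t)_{t\in V(O)})$ be the Tutte-bagel and let $H_t$ be a torso on five vertices. The plan is to show that every torso of $\cO$ has five vertices and is rigidly structured, then pin down the length of $O$ using regularity.

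\emph{Step 1: first reduction.} By \cref{lem:transitive-tutte-bagel}, if some torso is good then $G$ is a cycle alternating between $K_4$-bags and $C_4$-bags. In that case every torso has four vertices, contradicting the hypothesis. So every torso of $\cO$ is bad. By~\cite[Lemma~9.14]{TetraDecomp} (or the definition of bad torsos), each torso is then a triangle, a $4$-cycle, or a 3-connected torso on five vertices.

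\emph{Step 2: local structure and regularity.} Apply \cref{lem:tutte-bagel-wheel} to $G_t$. Its interior vertex $v$ is adjacent to all four adhesion-vertices, and $G_t$ contains a full matching between $\pi$ and $\pi'$. Since $G$ is 4-connected, $v$ has degree exactly~4, so $G$ is 4-regular by vertex-transitivity. We also read off the neighbourhood of $v$: the matching gives two disjoint edges, and $\pi$ and $\pi'$ may or may not be edges. We then compare this with the neighbourhood of an adhesion-vertex $x\in\pi$. The vertex $x$ has neighbours in both bags at $\pi$, and by 4-regularity it has only a bounded number of them. Counting edges at $x$ should rule out the neighbouring torso being a triangle, since that would force an extra neighbour, and should likewise rule out it being a $C_4$, since then the neighbourhood of $x$ would fail to match that of $v$ (for instance in whether it contains a perfect matching, or in its number of edges). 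Vertex-transitivity then forces every torso to be a five-vertex torso whose bag is a $4$-wheel, with the adhesion-sets independent and with interior vertices of degree~4. The degree count at adhesion-vertices should also show that consecutive adhesion-sets are disjoint.

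\emph{Step 3: identification.} At this point $G$ is a cycle of bags. Each bag consists of one hub adjacent to a $4$-cycle formed by two matching edges plus the adhesion pairs, and consecutive bags share an independent pair. The vertex neighbourhoods are then $C_4$-like, namely two disjoint edges in a prescribed arrangement. To fix the length of $O$, compare $v$ with an adhesion-vertex $x$. The neighbourhood of $x$ consists of two interior vertices of the two adjacent bags together with its two matching partners, and its induced structure depends only on local data. A vertex-transitive graph also needs an automorphism sending $v$ to $x$, so $x$ must lie in some 5-vertex "wheel" configuration that is not a bag. Using the canonicity of the tetra-decomposition, or \cite[Lemma~9.51]{TetraDecomp} giving $\alpha(\cO)\ge4$, together with the finiteness and $4$-regularity counts, this should force the cycle $O$ to have length~4. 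The result is then the 12-vertex graph in which each vertex lies in exactly two triangles, sharing only that vertex, and this is the line graph of the cube.

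\emph{Main obstacle.} I expect the main obstacle to be Step 3: excluding longer cycles. The cleanest route I would try first is to count triangles through each vertex. Each hub $v$ lies in exactly the triangles $v$–matching edge, which gives two edge-disjoint triangles. An adhesion-vertex $x$ lies in triangles with the hubs on either side. Requiring both counts, and the way these triangles meet, to agree should force the interior structure to repeat with period matching the cube's line graph and the cycle length to equal four. If that fails, I would use \cite[Lemma~9.51]{TetraDecomp} together with the automorphism mapping a hub to an adhesion-vertex, which must map $\cO$ to a different Tutte-bagel, to derive the length.
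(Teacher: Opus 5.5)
Your overall skeleton matches the paper's: all torsos are bad, the hub of the five-vertex torso gives 4-regularity, five-vertex torsos propagate around $O$, and then $|O|$ is fixed. However, two steps have genuine gaps.

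\textbf{The missing $K_4$ case.} Your list of possible bad torsos omits $K_4$. A bad 3-connected torso is only known to have at most five vertices, so the 3-connected neighbour $H_s$ of a five-vertex torso $H_t$ may be a $K_4$. This is exactly the case that degree counting cannot exclude. Let $\{u,v\}$ be the adhesion-set between $H_t$ and $H_s=K_4$, and $\{u',v'\}$ the other adhesion-set of $H_s$. Then $u$ has exactly four neighbours: the hub, its matching partner, $u'$ and $v'$. This is consistent with 4-regularity and only forces $uv\notin E(G)$. Moreover, the neighbourhoods of the hub and of $u$ can both induce $2K_2$, so comparing them does not help. The paper needs a vertex-transitivity argument that looks one step further:
\begin{itemize}
\item the hub lies in two triangles, so $v$ must too, which forces $u'v'\in E(G)$;
\item then $v$ lies in exactly two triangles, and they meet only in $v$;
\item but $v'$ lies in the triangles $vu'v'$ and $uu'v'$, which share the edge $u'v'$.
\end{itemize}

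\textbf{Fixing $|O|$.} Your Step~3 strategy of counting triangles cannot determine $|O|$. Once every torso has five vertices and the adhesion-sets are independent, every vertex (hub or adhesion-vertex) has a neighbourhood inducing $2K_2$. So every vertex lies in exactly two triangles meeting only in it, for every length of $O$ and regardless of how the matching edges close up. Indeed, these graphs are exactly the line graphs of the prisms $C_n\square K_2$ and of the M\"obius ladders.

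For the same reason, your final identification is not unique. When $|O|=4$, let $X$ be the set of hubs. The matching edges $G-X$ form either two $4$-cycles, giving the line graph of the cube, or a single $8$-cycle, giving the line graph of the Wagner graph. Both are $12$-vertex graphs in which each vertex lies in exactly two triangles meeting only in it.

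The missing idea is to count \emph{induced $4$-cycles}. If $|O|\ge 5$, each hub lies in exactly two induced $4$-cycles while each adhesion-vertex lies in exactly one. When $|O|=4$ and $G-X$ is an $8$-cycle, the same discrepancy occurs. This leaves only the line graph of the cube. Your fallback via \cite[Lemma~9.51]{TetraDecomp} (which only yields $|O|\ge 4$) and an automorphism moving $\mathcal{O}$ to another Tutte-bagel is not an argument as it stands.
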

\begin{proof}
    Write $\cO=:(O,\cG)$.
    By \cref{lem:transitive-tutte-bagel}, all bags of $\cO$ are bad.
    Let $H_t$ be a torso of $\cO$ on five vertices.
    Since the unique interior vertex~$x$ of~$H_t$ has degree~4, the graph~$G$ is 4-regular.
    Since~$H_t$ is bad, it is neighboured by a 3-connected torso~$H_s$.
    Since $H_s$ is bad as well, it has $\le 5$ vertices.
    That is, the torso of~$H_s$ either is a~$K_4$ or has exactly five vertices.

    We claim that $H_s$ has exactly five vertices.
    Otherwise $H_s=K_4$.
    Let $\{u,v\}$ denote the adhesion-set in between the torsos $H_t$ and~$H_s$.
    Then $uv \notin E(G)$ by 4-regularity of~$G$.
    Let $\{u',v'\}$ denote the other adhesion-set at~$H_s$.
    Since~$v$ is contained in at least two triangles in~$G$, we must have $u'v' \in E(G)$.
    The vertex $v$ is contained in exactly two triangles: one from $H_s$ and the other is $vu'v'$.
    The vertex~$v'$, however, is contained in at least two triangles (namely $vu'v'$ and $uu'v'$), two of which intersect in the edge~$u'v'$.
    This contradicts the vertex-transitivity of~$G$.
    Hence $H_s$ has exactly five vertices.

    It follows by induction that all torsos of $\cO$ have exactly five vertices.
    By \cref{lem:tutte-bagel-wheel}, each bag contains a 2-matching between its two adhesion-sets, and the interior vertex of the bag sends edges to all four other vertices in the bag.
    By 4-regularity and since every bag looks like this, these are all the edges that there are in the bags.

    Since each bag has a unique interior vertex, the set $X$ of all these vertices has size~$|O|$.
    Since all torsos are 3-connected, we have $|O|\ge 4$ by~\cite[Lemma~9.51]{TetraDecomp}.
    If~$|O| \geq 5$, then every vertex in $X$ is contained in exactly two induced 4-cycles in~$G$, while every vertex in an adhesion set is contained in exactly one induced 4-cycle in~$G$, contradicting vertex-transitivity. 
    Therefore, we have~$|O|=4$. 
    Consider the graph~$H :=  G - X$, which is either an 8-cycle or a disjoint union of two 4-cycles. 
    If~$H$ is an 8-cycle, then every vertex in~$X$ is contained in two induced 4-cycles of~$G$ while every vertex in $V(G - X)$ is only contained in one induced 4-cycle of~$G$, contradicting vertex-transitivity. 
    Therefore $H$ is a disjoint union of two 4-cycles, and it follows that~$G$ is the line graph of the cube.
\end{proof}

\begin{lemma}
\label{lem:transitive-bagel-without-K5}
    Let $G$ be a vertex-transitive 4-connected finite graph that has two tetra-separations $(A,B),(C,D)$ that cross with all links of size~two.
    Let~$\cO$ be the Tutte-bagel induced by~$(A,B)$ and~$(C,D)$.
    Assume that every bag of $\cO$ has $\le 4$ vertices.
    Then~$G$ is one of the following graphs:
    \begin{itemize}
        \item a cycle alternating between $K_4$-bags and $C_4$-bags of length~$\ge 8$;
        \item a cycle of triangle-bags of length~$\ge 6$;
        \item a cycle of~$K_{2,2}$-bags of length~$\ge 4$;
        \item a cycle of~$K_4$-bags of length~$\ge 4$.
    \end{itemize}
\end{lemma}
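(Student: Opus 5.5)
We split according to whether $\cO=:(O,(G_t)_{t\in V(O)})$ has a good torso. If it does, \cref{lem:transitive-tutte-bagel} makes $G$ a cycle alternating between $K_4$-bags and $C_4$-bags of length $\ge 8$, and we are done. So assume from now on that every torso of $\cO$ is bad.

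Since every bag has at most four vertices, every torso is a triangle, a 4-cycle, or a $K_4$ (this is the bad-torso description, cf.~\cite[Lemma~9.14]{TetraDecomp}). The plan is first to show that all torsos have the same type, and then to identify each homogeneous case.

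\emph{Mixed torsos.} Suppose two consecutive torsos have different types. We want to compare local structure around vertices in different adhesion-sets, namely the number of triangles through a vertex and the number of induced 4-cycles through it. As in the proof of \cref{lem:transitive-cube}, a discrepancy contradicts vertex-transitivity. Two facts make this tractable. First, each adhesion-vertex lies in exactly two consecutive bags. Second, 4-connectivity forces every vertex to have degree $\ge 4$, which constrains which adhesion-sets carry edges. I expect this to be the main obstacle: it needs a somewhat tedious case analysis over the pairs of neighbouring torso types. I would first use 4-regularity, which is forced by any vertex whose degree can be read off from bounded bags, to pin down the degrees. For instance, a vertex of a triangle-bag adjacent to a 4-cycle torso lies in only one or two bags, each with at most four vertices, so its degree is small. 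I would then count triangles through each adhesion-vertex. \cite[Lemma~9.4]{TetraDecomp} on consecutive torsos should also eliminate some patterns, such as two consecutive 4-cycle torsos or two consecutive triangle torsos.

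\emph{Homogeneous cases.} Each of these leads to one of the listed graphs.
\begin{itemize}
\item If all torsos are triangles, consecutive adhesion-sets overlap in one vertex. By degree $\ge 4$ all bag edges are present, so $G$ is a cycle of triangle-bags.
\item If all torsos are $K_4$'s with disjoint adhesion-sets, vertex-transitivity forces either all adhesion-sets to be edges or all to be non-edges. The first gives a cycle of $K_4$-bags. The second gives a cycle of $K_{2,2}$-bags.
\item All torsos being 4-cycles should be impossible by \cite[Lemma~9.4]{TetraDecomp}, or because the resulting degrees are too small for 4-connectivity.
\end{itemize}

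\emph{Length bounds.} For the length bounds we use $\alpha(\cO)\ge 4$ from \cite[Lemma~9.51]{TetraDecomp}, together with the requirement that the links have size two. This should give length $\ge 4$ for cycles of $K_4$- and $K_{2,2}$-bags and length $\ge 6$ for cycles of triangle-bags. Short cycles would be $K_5$, the octahedron or $K_6$, which have no crossing tetra-separations with links of size two.
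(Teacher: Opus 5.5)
Your opening reduction (a good torso gives the alternating cycle via \cref{lem:transitive-tutte-bagel}) and your treatment of the homogeneous cases are fine. The gap is the step you yourself call the main obstacle, ruling out mixed torso types: it is never carried out, and the mechanism you propose for it cannot work. You want to contradict vertex-transitivity by comparing local counts (triangles, induced 4-cycles) at vertices in different adhesion-sets. But a $4$-cycle torso flanked by $K_4$-torsos is exactly the pattern of a cycle alternating between $K_4$-bags and $C_4$-bags. That graph is vertex-transitive, so no local invariant can exclude this pattern. Nor does \cite[Lemma~9.4]{TetraDecomp} exclude it: it only says that the neighbours of a 4-cycle torso are 3-connected, i.e.\ $K_4$'s here. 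The missing idea is to use the hypothesis that all torsos are bad, which your argument never touches after the initial case split. A 3-connected torso next to a 4-cycle torso is good. Combined with \cite[Lemma~9.4]{TetraDecomp}, this shows that there are no 4-cycle torsos at all.

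That leaves only triangle- and $K_4$-torsos, and mixing them is excluded by a degree argument, not by triangle counts. Suppose a triangle-torso $H_t$ meets a $K_4$-torso $H_s$ in $\{u,v\}$, with $v$ the tip of $H_t$. Then $u$ has at most four possible neighbours, so $uv\in E(G)$ and $G$ is 4-regular. Now $v$ has three neighbours in $G_s$, hence only one in the other neighbouring bag $G_r$ of $G_t$. This rules out $H_r=K_4$. If instead $H_r$ is a triangle, \cite[Lemma~9.4]{TetraDecomp} gives $v$ a second neighbour in $G_r$, so $v$ has degree~$5$, contradicting 4-regularity. Separately, your suggestion that \cite[Lemma~9.4]{TetraDecomp} forbids two consecutive triangle torsos is false: that is precisely the structure of a cycle of triangle-bags, one of the outcomes you must obtain.
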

\begin{proof}
    If~$\cO$ has a good torso, then~$G$ is a cycle alternating between $K_4$-bags and $C_4$-bags by \cref{lem:transitive-tutte-bagel}, and it is of length $\ge 8$ by~\cite[Lemma~9.51]{TetraDecomp}.
    Hence, we may assume that all torsos of $\cO$ are bad.

    We claim that $\cO$ has no 4-cycle as torso.
    Indeed, if $\cO$ has a 4-cycle as a torso, then the neighbouring torsos are 3-connected by~\cite[Lemma~9.4]{TetraDecomp}, and they are good, a contradiction.
    Thus,
    \begin{equation}\label{lem:transitive-bagel-without-K5-eq}
        \textit{all torsos are either triangles or $K_4$'s.}
    \end{equation}

    We claim that if some torso of $\cO$ is a triangle, then also its neighbouring torsos are triangles.
    For this, let $H_t$ and $H_s$ be neighbouring torsos with $H_t$ a triangle.
    By \cref{lem:transitive-bagel-without-K5-eq}, the torso $H_s$ is either a triangle or a~$K_4$.
    Assume for a contradiction that $H_s=K_4$.
    Let $\{u,v\}$ be the adhesion-set in between $H_t$ and $H_s$, so that $v$ is the tip of the triangle-torso~$H_t$, say.
    Since every vertex has degree~$\ge 4$, the edge $uv$ must exist in $G$, and $G$ must be 4-regular.
    Let $w$ be the vertex of the triangle $H_t$ besides $u$ and~$v$.
    Let $G_r$ be the bag neighbouring $G_t$ besides~$G_s$.
    Then $v$ has exactly one neighbour in the bag~$G_r$.
    By \cref{lem:transitive-bagel-without-K5-eq}, the torso $H_r$ can only be a triangle or a~$K_4$.
    As $v$ has exactly one neighbour in the bag~$G_r$, the torso $H_r$ cannot be a~$K_4$, so it must be a triangle.
    But then the edge $vw$ exists by~\cite[Lemma~9.4]{TetraDecomp}, so $v$ has two neighbours in~$G_r$, giving it degree five in~$G$, contradicting 4-regularity.
    Therefore, every triangle-torso of $\cO$ is neighboured by two triangle-torsos.

    So if $\cO$ has at least one triangle-torsos, then all torsos are triangles, and all adhesion-sets are spanned by edges by~\cite[Lemma~9.4]{TetraDecomp}.
    Hence $G$ is a cycle of triangle-bags, and it has length $\ge 6$ by~\cite[Lemma~9.51]{TetraDecomp} and since $G \neq K_5$.

    So we may assume that $\cO$ has no triangle-torsos.
    Then by \cref{lem:transitive-bagel-without-K5-eq}, all torsos of $\cO$ are $K_4$'s.
    Then either all adhesion-sets are spanned by edges in~$G$, in which case~$G$ is a cycle of~$K_4$-bags; or none of them are, in which case~$G$ is a cycle of~$K_{2,2}$-bags.
    In both cases, the length is $\ge 4$ by~\cite[Lemma~9.51]{TetraDecomp}.
\end{proof}

\begin{lemma}
\label{lem:transitive-bagel-case}
    Let $G$ be a vertex-transitive 4-connected finite graph that has two tetra-separations $(A,B),(C,D)$ that cross with all links of size~two. Then~$G$ is one of the following:
    \begin{itemize}
        \item a cycle alternating between $K_4$-bags and $C_4$-bags of length~$\ge 8$;
        \item a cycle of triangle-bags of length~$\ge 6$;
        \item a cycle of~$K_{2,2}$-bags of length~$\ge 4$;
        \item a cycle of~$K_4$-bags of length~$\ge 4$;
        \item line graph of the cube.
    \end{itemize}
\end{lemma}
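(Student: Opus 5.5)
The plan is to assemble \cref{lem:transitive-bagel-case} directly from the three preceding lemmas on Tutte-bagels. Let $\cO=(O,(G_t)_{t\in V(O)})$ be the Tutte-bagel induced by $(A,B)$ and $(C,D)$. The case split is on the sizes of the bags of~$\cO$: either every bag has at most four vertices, or some bag has at least five vertices.

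In the first case, \cref{lem:transitive-bagel-without-K5} applies verbatim. It yields exactly the first four outcomes: a cycle alternating between $K_4$-bags and $C_4$-bags of length $\ge 8$, a cycle of triangle-bags of length $\ge 6$, a cycle of $K_{2,2}$-bags of length $\ge 4$, or a cycle of $K_4$-bags of length $\ge 4$.

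In the second case, fix a bag $G_t$ with $|G_t|\ge 5$; its torso $H_t$ then has the same vertex set.
\begin{itemize}
\item If $H_t$ is good, \cref{lem:transitive-tutte-bagel} applies and gives the alternating $K_4$/$C_4$ outcome.
\item So assume every torso is bad. If $H_t$ has exactly five vertices, \cref{lem:transitive-cube} gives the line graph of the cube.
\end{itemize}

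The remaining point, and the only real obstacle, is to exclude a bad torso on $\ge 6$ vertices. Here one must recall the definition of ``good'' torso from the Tutte-bagel torso setting of~\cite{TetraDecomp}. I expect it to say roughly that a torso is good if it is 3-connected and either has $\ge 6$ vertices or is neighboured by a 4-cycle torso. The proof of \cref{lem:transitive-cube} supports this reading: there, a bad torso neighbouring a 3-connected torso is concluded to have $\le 5$ vertices.

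Torsos of a Tutte-bagel are 3-connected, cycles, or triangles, and a cycle or triangle has at most four vertices. Hence a torso with $\ge 6$ vertices is 3-connected, and by that definition it is good. If the definition in~\cite{TetraDecomp} is more restrictive, the fallback is the argument already used implicitly in \cref{lem:transitive-cube}, and I would cite whichever lemma of~\cite[§9]{TetraDecomp} records that a 3-connected torso on $\ge 6$ vertices is good. In either form, a torso on $\ge 6$ vertices is handled by \cref{lem:transitive-tutte-bagel}, which completes the case distinction.
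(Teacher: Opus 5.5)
Your proposal is correct and takes the same route as the paper, whose proof simply combines \cref{lem:transitive-cube} and \cref{lem:transitive-bagel-without-K5}. You also spell out the case of a bag on $\ge 6$ vertices, which the paper leaves implicit, and you handle it correctly: such a torso is 3-connected, since the only non-3-connected torsos are triangles and 4-cycles (cf.\ \cite[Lemma~9.14]{TetraDecomp}, which is what your ``at most four vertices'' remark needs). It is therefore good in the sense used in the proof of \cref{lem:transitive-cube}, so \cref{lem:transitive-tutte-bagel} applies.
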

\begin{proof}
    We combine~\cref{lem:transitive-cube,lem:transitive-bagel-without-K5}.
\end{proof}

\begin{corollary}
\label{cor:transitive-crossing}
    Let $G$ be a vertex-transitive 4-connected finite graph that has two tetra-separations $(A,B),(C,D)$ that cross.
    Then~$G$ is one of the following:
    \begin{itemize}
        \item a cycle alternating between $K_4$-bags and $C_4$-bags of length~$\ge 8$;
        \item a cycle of triangle-bags of length~$\ge 6$;
        \item a cycle of~$K_{2,2}$-bags of length~$\ge 3$;
        \item a cycle of~$K_4$-bags of length~$\ge 4$;
        \item line graph of the cube.
    \end{itemize}
\end{corollary}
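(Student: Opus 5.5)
The plan is to split according to how the two crossing tetra-separations $(A,B)$ and $(C,D)$ look at their crossing. By the Crossing Lemma~\cite[4.1]{TetraDecomp}, two crossing tetra-separations in a 4-connected graph have all four links of equal size, and this common size is zero, one or two. Each of these three cases is exactly the setting of one of the lemmas proved above, so I would dispatch them one at a time and then take the union of the resulting outcome lists.

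\emph{Links empty.} Here \cref{lem:transitive-K4m} gives $G=K_{4,4}$. This graph is a cycle of $K_{2,2}$-bags of length~$4$, so it already appears in the list, since lengths $\ge 3$ are allowed there.

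\emph{Links of size one.} Here \cref{lem:transitive-double-wheel} shows that $G$ is the octahedron. The octahedron is a cycle of $K_{2,2}$-bags of length~$3$. This is exactly why the bound in the corollary reads ``$\ge 3$'' for $K_{2,2}$-bags, whereas \cref{lem:transitive-bagel-case} only gives ``$\ge 4$''.

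\emph{Links of size two.} Here \cref{lem:transitive-bagel-case} applies directly. It yields one of the following: a cycle alternating between $K_4$-bags and $C_4$-bags of length $\ge 8$, a cycle of triangle-bags of length $\ge 6$, a cycle of $K_{2,2}$-bags of length $\ge 4$, a cycle of $K_4$-bags of length $\ge 4$, or the line graph of the cube. Each of these is contained in the corollary's list.

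The only point needing care is the appeal to the Crossing Lemma. I must make sure it does guarantee that all links of two crossing tetra-separations have equal size in $\{0,1,2\}$, so that the case split is exhaustive. I expect this to hold under the standing definition of tetra-separations in~\cite{TetraDecomp}. Everything else is bookkeeping: checking that the lengths produced in the first two cases fit the stated bounds.
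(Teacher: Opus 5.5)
Your proposal is correct and is exactly the paper's argument. The Crossing Lemma forces all links to have a common size $0$, $1$ or $2$. These three cases are handled by \cref{lem:transitive-K4m} ($K_{4,4}$, a cycle of $K_{2,2}$-bags of length~$4$), \cref{lem:transitive-double-wheel} (the octahedron, length~$3$) and \cref{lem:transitive-bagel-case}, respectively.
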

\begin{proof}
    By the Crossing Lemma~\cite[4.1]{TetraDecomp}, all links have the same size $0$, $1$ or~$2$.
    If all links have size~$0$, then $G$ is a cycle of~$K_{2,2}$-bags of length~$4$ by \cref{lem:transitive-K4m}.
    If all links have size~$1$, then $G$ is a cycle of~$K_{2,2}$-bags of length~$3$ by \cref{lem:transitive-double-wheel}.
    Otherwise all links have size~$2$, and we apply \cref{lem:transitive-bagel-case}.
\end{proof}

\begin{observation}
\label{obs:expansion-maintains-degrees}
    Let~$G'$ be the clique-expansion of a graph~$G$.
    Let~$v$ be a vertex of degree~$d$ in~$G$, and let~$C_v$ be the clique of~$G'$ that corresponds to~$v$.
    Then every vertex in~$C_v$ has degree~$d$ in~$G'$.\qed
\end{observation}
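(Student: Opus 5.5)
This is a direct degree count from the definition of the clique-expansion. Fix a vertex $x\in C_v$ and count its neighbours in $G'$ in two groups: those inside $C_v$, and those reached by the matching edges $\hat e$.

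First, the internal neighbours. By construction $C_v$ is a clique on exactly $d=\deg_G(v)$ vertices. So $x$ has exactly $d-1$ neighbours inside $C_v$. Since the cliques $C_u$ are pairwise disjoint and the only other edges of $G'$ are the edges $\hat e$, no edge of any other clique is incident with $x$.

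Second, the matching edges. The edges $\hat e$ form a perfect matching of $G'$, so exactly one of them is incident with $x$. Each $\hat e$ corresponding to $e=uv$ joins $C_u$ to $C_v$, and $u\neq v$ because $G$ has no loops. Hence this matching edge leaves $C_v$ and contributes exactly one further neighbour, distinct from the $d-1$ internal ones. It also does not duplicate an internal edge, since it has an end outside $C_v$.

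Adding the two groups gives $\deg_{G'}(x)=(d-1)+1=d$.

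The only point needing care is whether the construction is consistent, since the edges $\hat e$ at $v$ must biject onto the vertices of $C_v$. This holds because $|C_v|=d$ equals the number of edges of $G$ at $v$, and the perfect-matching requirement forces each vertex of $C_v$ to receive exactly one $\hat e$. I expect no real obstacle here.
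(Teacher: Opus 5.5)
Your proposal is correct: the count of $d-1$ neighbours inside the clique $C_v$ plus exactly one matching edge $\hat e$ leaving $C_v$ is precisely the immediate argument the paper has in mind. The paper states this observation without proof (it ends with $\qed$), so there is nothing further to compare.
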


\begin{observation}
    \label{obs:expansion-clique-partition}
    Let~$G$ be a graph of minimum degree at least~$2$, and let~$G'$ be the clique-expansion of~$G$.
    Then there is a unique partition~$\cP$ of~$V(G')$ into (the vertex-sets of) $|\cP|=|V(G)|$ cliques.
    Moreover, the contraction-minor $G' / \cP$ is isomorphic to~$G$.\qed
\end{observation}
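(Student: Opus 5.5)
The claim to prove is \cref{obs:expansion-clique-partition}: for $G$ of minimum degree at least~$2$ with clique-expansion~$G'$, the partition of $V(G')$ into the cliques $C_v$ is the unique partition of $V(G')$ into $|V(G)|$ cliques, and $G'/\cP\cong G$. I would begin from the canonical partition $\cP_0:=\{C_v : v\in V(G)\}$, which by construction is a partition of $V(G')$ into $|V(G)|$ cliques. The contraction statement then comes straight from the definition. In $G'$ the edges between $C_u$ and $C_v$ are exactly the matching edges $\hat e$ with $e=uv\in E(G)$. Contracting each $C_v$ therefore yields a graph on $V(G)$ in which $u\neq v$ are adjacent precisely when $uv\in E(G)$, so $G'/\cP_0\cong G$. (If $G$ has parallel edges, $G'/\cP_0$ keeps them as a multigraph, and the same identification works.)

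Uniqueness is the real content. The plan is to show that every clique $K$ of $G'$ lies inside a single $C_v$ or is a single matching edge $\hat e$. Each vertex $x\in C_v$ has exactly one neighbour outside $C_v$, namely its matching partner. Now suppose a clique $K$ meets two different sets, say $x\in C_u\cap K$ and $y\in C_v\cap K$ with $u\neq v$. Then $y$ must be the matching partner of~$x$, and $x$ must be the matching partner of~$y$. A third vertex $z\in K$ would lie in some $C_w$ and would have to be adjacent to both $x$ and~$y$. If $w=u$, then $y$ would have two neighbours outside $C_v$, namely $x$ and~$z$. If $w=v$, the symmetric argument applies to~$x$. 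If $w$ is neither, then $x$ has two outside neighbours. Each case is impossible, so $|K|=2$.

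Now let $\cP$ be any partition of $V(G')$ into $|V(G)|$ cliques. Each part is either a subset of some $C_v$ or a matching edge. Counting finishes the argument. Every $C_v$ has $\deg(v)\ge 2$ vertices, and the minimum-degree hypothesis is used here. Suppose the parts inside $C_v$ number $k_v\ge 1$, and suppose $m$ vertices of $C_v$ are covered by matching-edge parts. Any part contained in $C_v$ covers at most $|C_v|$ vertices. The key inequality I would establish is that the number of parts is at least $|V(G)|$, with equality only when every $C_v$ is a single part. To see this, assign to each $v$ the parts meeting $C_v$, giving a matching-edge part weight~$1/2$ toward each of its two ends. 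Each $v$ then receives weight at least~$1$. Equality forces $C_v$ to be either exactly one clique-part, or to be covered entirely by matching edges with $|C_v|\cdot\tfrac12=1$, that is, $|C_v|=2$.

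The main obstacle is excluding that second equality case, where some $C_v$ with $\deg(v)=2$ is covered by matching edges. Total weight equal to $|V(G)|$ forces every $v$ into an equality case. If one $C_v$ of size~$2$ is covered by matching edges, its neighbours must then be covered the same way, so they also have degree~$2$. Since $G'$ is finite, this propagates along a whole component of $G$ that is a cycle. A cycle component does admit such an alternative partition, with the cycle's edges in $G'$ split into matching edges. I would handle this either by noting that in this situation the two partitions are exchanged by an obvious identification giving the same contraction up to isomorphism, or by reading "unique" as unique up to this cycle exception, which is harmless since those graphs are cycles. Checking how the paper intends this degenerate case is the point I would flag.
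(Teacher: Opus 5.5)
Your proof is correct. The degenerate case you flag is not a gap in your argument but a genuine counterexample to the observation as literally stated; the paper gives no proof, only \qed.

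Take $G=C_3$. Then $G'=C_6$, and every clique of $C_6$ has at most two vertices. So every partition of $V(G')$ into three cliques is a perfect matching, and $C_6$ has two of them.

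Your weighting argument, together with your remark about cycle components, shows exactly when uniqueness holds: if and only if no component of $G$ is a cycle. In particular it holds whenever $G$ has minimum degree at least~$3$, since a $C_v$ covered entirely by matching-edge parts then receives weight $\deg(v)/2\ge 3/2>1$. That is the only setting in which the paper uses the observation. Its sole application is the proof of \cref{lem:expansion-edge-transitive}, where $G$ is $r$-regular with $r\ge 3$. The ``moreover'' part does survive the exception, since contracting either perfect matching of $C_{2n}$ yields~$C_n$.

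Of your two proposed resolutions, only the second is sound. The two partitions of a cycle component are swapped by a rotation of $G'$, so you do get uniqueness up to $\Aut(G')$. But this is strictly weaker than what the paper needs. The proof of \cref{lem:expansion-edge-transitive} uses uniqueness to conclude that $\Aut(G')$ acts on~$\cP$, and for $G'=C_{2n}$ the one-step rotation does not map $\cP$ to itself.

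So the right fix is to assume minimum degree at least~$3$, or that no component of $G$ is a cycle. Under that hypothesis your argument is complete as written.

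A cosmetic point: where you write ``$k_v\ge 1$'', allow $k_v=0$. That is exactly the all-matching-edges case you then treat.
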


\begin{observation}
\label{obs:expansion-isomorphic}
    The clique-expansion of a graph~$G$ is isomorphic to the graph~$H$ with 
    \begin{align*}
        V(H) & {}=\{(u,v),(v,u) : uv \in E(G)\}\text{ and}\\
        E(H) & {}=\Big\{\{(u,v),(v,u)\} : uv \in E(G)\Big\}\,\cup\,\Big\{\{(u,v_1),(u,v_2)\} : uv_1,uv_2 \in E(G) \Big\}.
    \end{align*}\qed
\end{observation}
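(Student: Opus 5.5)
We construct an explicit isomorphism from the graph~$H$ in the statement to the clique-expansion~$G'$ of~$G$; we read the statement with $E(H)$ ranging over $v_1\neq v_2$, so that $H$ has no loops. First we record the structure of~$G'$: for each $u\in V(G)$ there is a clique~$C_u$ with $|C_u|=d(u)$, and there are the matching edges $\hat e$, one for each $e\in E(G)$. Every vertex $x\in C_u$ is the end of exactly one matching edge~$\hat e$, because these edges form a perfect matching. Since $\hat e$ is a $C_u$--$C_w$ edge, it corresponds to an edge $e=uw$ of~$G$ incident with~$u$.

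The first step is the candidate bijection $\psi\colon V(H)\to V(G')$. We send the ordered pair $(u,v)$ to the end in~$C_u$ of the matching edge~$\widehat{uv}$. This is well defined: $\widehat{uv}$ joins $C_u$ and~$C_v$, and $C_u\cap C_v=\emptyset$ for $u\neq v$, so exactly one of its ends lies in~$C_u$. For injectivity, suppose $\psi(u,v)=\psi(u',v')$. Then $u=u'$ because the cliques are disjoint, and the two matching edges share an end, so $uv=uv'$ by the perfect-matching property. For surjectivity, every $x\in C_u$ is the $C_u$-end of its matching edge~$\hat e$ with $e=uv$, so $x=\psi(u,v)$. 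Alternatively, one can count: $|V(G')|=\sum_u d(u)=2|E(G)|=|V(H)|$.

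The second step checks that $\psi$ maps edges to edges and non-edges to non-edges. The edges of~$G'$ are of two kinds, clique edges inside some~$C_u$ and matching edges. For the first kind, $\psi(u,v_1)$ and $\psi(u,v_2)$ with $v_1\neq v_2$ are distinct vertices of the clique~$C_u$, so they are adjacent; conversely, any two distinct vertices of~$C_u$ have the form $\psi(u,v_1),\psi(u,v_2)$. For the second kind, $\psi(u,v)$ and $\psi(v,u)$ are precisely the two ends of~$\widehat{uv}$; conversely, every matching edge arises in this way. No edge of~$G'$ joins two different cliques except a matching edge, and every edge of~$H$ is of one of its two listed kinds. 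Hence $\psi$ is an isomorphism.

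There is no real obstacle here. The only points needing care are the well-definedness and injectivity of~$\psi$, both of which rest on the perfect-matching condition and on the disjointness of the cliques~$C_u$.
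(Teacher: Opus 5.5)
Your proof is correct. The paper states this as an observation and gives no proof; your explicit bijection, sending $(u,v)$ to the end in $C_u$ of the matching edge for $uv$, is exactly the identification the observation takes for granted. You check it properly against the perfect-matching condition and the disjointness of the cliques.
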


\begin{lemma}
\label{lem:expansion-edge-transitive}
    Let~$G$ be an $r$-regular graph with $r \geq 3$.
    Then the following assertions are equivalent:
    \begin{enumerate}
        \item\label{lem:expansion-edge-transitive1} the $K_r$-expansion of~$G$ is vertex-transitive;
        \item\label{lem:expansion-edge-transitive2} $G$ is arc-transitive.
    \end{enumerate}
\end{lemma}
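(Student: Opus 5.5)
We use the explicit model of \cref{obs:expansion-isomorphic}. There the $K_r$-expansion $G'$ of~$G$ has as vertices the arcs $(u,v)$ of~$G$, that is, ordered pairs with $uv\in E(G)$. Its edges are the matching edges $\{(u,v),(v,u)\}$ and the clique edges $\{(u,v_1),(u,v_2)\}$. We prove the two implications separately.

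\emph{(ii)$\Rightarrow$(i).} Every automorphism $\alpha$ of $G$ induces the map $(u,v)\mapsto(\alpha(u),\alpha(v))$ on $V(G')$. This map sends matching edges to matching edges and clique edges to clique edges, so it is an automorphism of~$G'$. If $G$ is arc-transitive, then the induced maps act transitively on the arcs of~$G$, which are exactly the vertices of~$G'$. Hence $G'$ is vertex-transitive.

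\emph{(i)$\Rightarrow$(ii).} Assume that $G'$ is vertex-transitive. The key step is to show that every automorphism $\beta$ of $G'$ preserves the partition $\cP=\{C_u : u\in V(G)\}$, where $C_u=\{(u,v): v\in N(u)\}$, and therefore also preserves the matching. By \cref{obs:expansion-clique-partition}, $\cP$ is the unique partition of $V(G')$ into $|V(G)|$ cliques. Since $\beta$ maps cliques to cliques, $\beta(\cP)$ is again such a partition, so $\beta(\cP)=\cP$. (To be safe, one can argue directly instead. Since $r\ge3$, each clique edge $(u,v_1)(u,v_2)$ lies in the triangle formed with $(u,v_3)$. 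A matching edge $(u,v)(v,u)$ lies in no triangle: a common neighbour would have to lie in both $C_u$ and $C_v$, or be joined to one of the two endpoints by a second matching edge, which is impossible. So automorphisms preserve the set of matching edges, and hence the components $C_u$ of $G'$ minus the matching.) It follows that $\beta$ induces a permutation $\bar\beta$ of $V(G)$ via $\beta(C_u)=C_{\bar\beta(u)}$. This $\bar\beta$ is an automorphism of~$G$: $uv\in E(G)$ holds if and only if some matching edge joins $C_u$ and $C_v$, and $\beta$ preserves matching edges. Moreover, $\beta$ maps the arc-vertex $(u,v)$ to the arc-vertex $(\bar\beta(u),\bar\beta(v))$. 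The reason is that $(u,v)$ is the unique vertex of $C_u$ whose matching partner lies in $C_v$, and this property is carried over by~$\beta$.

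Now let $(u_1,v_1)$ and $(u_2,v_2)$ be arcs of~$G$. Vertex-transitivity of $G'$ gives some $\beta$ with $\beta((u_1,v_1))=(u_2,v_2)$. By the previous paragraph, $\bar\beta(u_1)=u_2$ and $\bar\beta(v_1)=v_2$, so $G$ is arc-transitive.

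The main obstacle is showing that automorphisms of $G'$ respect the clique structure. This is exactly where the hypothesis $r\ge3$ is needed, since for $r=2$ the expansion is a cycle and the matching edges are not distinguishable from the clique edges. The triangle argument above handles this step.
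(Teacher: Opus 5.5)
Your proof is correct and follows the paper's argument. For (ii)$\Rightarrow$(i) you use the induced map $(u,v)\mapsto(\alpha(u),\alpha(v))$ on the arc model, and for (i)$\Rightarrow$(ii) you use that $\Aut(G')$ preserves the clique partition of \cref{obs:expansion-clique-partition} and descends to an automorphism of $G$; your matching-partner argument for $\bar\beta(v_1)=v_2$ and your triangle argument for $r\ge 3$ also make explicit two points that the paper covers only by ``Similarly'' and by appeal to the observation.
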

\begin{proof}
    Let $G'$ denote the $K_r$-expansion of~$G$.
    By \cref{obs:expansion-clique-partition},~$G'$ has a unique partition~$\cP$ of its vertex-set into vertex-sets of $r$-cliques.
    Let $\gamma\colon \cP \to V(G)$ be the corresponding bijection.
    Since $\cP$ is unique, $\Aut(G')$ acts on~$\cP$.
    
    \cref{lem:expansion-edge-transitive1} $\Rightarrow$ \cref{lem:expansion-edge-transitive2}. 
    Let $(u_1,v_1),(u_2,v_2)$ be ordered pairs of vertices in~$G$ with $u_i v_i\in E(G)$.
    We will find an automorphism~$\phi$ of~$G$ such that $\varphi(u_1)=u_2$ and $\varphi(v_1)=v_2$.
    For this, we abbreviate $x_i :=(u_i,v_i)$.
    Let~$P_i \in \cP$ be the unique partition-class of~$\cP$ that contains~$x_i$. 
    Since~$G'$ is vertex-transitive by assumption, there is an automorphism~$\phi'$ of~$G'$ that maps~$x_1$ to~$x_2$.
    Note that $\varphi$ then maps $P_1$ to~$P_2$.
    We define $\phi := \gamma \circ \phi' \circ \gamma^{-1}$ by abuse of notation, using that $\Aut(G')$ acts on~$\cP$.
    Clearly, $\phi$ is a bijection $V(G)\to V(G)$.
    Moreover, we have
    \begin{align*}
        & \text{ }xy\in E(G)\\
        \Leftrightarrow & \text{ there is a $\gamma^{-1}(x)$--$\gamma^{-1}(y)$ edge in~$G'$}\\
        \Leftrightarrow & \text{ there is a $\phi'(\gamma^{-1}(x))$--$\phi'(\gamma^{-1}(y))$ edge in~$G'$}\\
        \Leftrightarrow & \text{ $\phi(x)\phi(y) \in E(G)$.}
    \end{align*}
    So,~$\phi$ is an automorphism of~$G$.
    Moreover, we have
    \[
        \phi(u_1) = \gamma(\phi'(\gamma^{-1}(u_1))) = \gamma(\phi'(P_1)) = \gamma(P_2) = u_2.
    \]
    Similarly, $\phi(v_1)=v_2$.

    \cref{lem:expansion-edge-transitive2} $\Rightarrow$ \cref{lem:expansion-edge-transitive1}.
    We assume without loss of generality that $G'$ takes the form described in~\cref{obs:expansion-isomorphic}.
    Let $x_1,x_2 \in V(G')$ be two vertices.
    We will find an automorphism~$\phi'$ of~$G'$ that maps~$x_1$ to~$x_2$.
    We write $x_i=:(u_i,v_i)$ using \cref{obs:expansion-isomorphic}, so $u_i v_i$ is an edge of $G$.
    Since~$G$ is arc-transitive, there is an automorphism~$\phi$ of~$G$ such that $\varphi(u_1)=u_2$ and $\varphi(v_1)=v_2$.
    Now $\varphi$ defines an automorphism $\varphi'$ of $G'$ that sends every vertex $x=(u,v)$ to the vertex $\varphi'(x):=(\varphi(u),\varphi(v))$.
    The automorphism $\varphi'$ satisfies
    \[
        \phi'(x_1)=(\phi(u_1),\phi(v_1)) = (u_2,v_2) = x_2.\qedhere
    \]
\end{proof}

\begin{corollary}
    \label{cor:expansion-mixed-separation-not-transitive}
    Let~$G$ be an $r$-regular graph with $r\ge 3$.
    Let $(A,B)$ be a mixed-$r$-separation of $G$ such that $S(A,B)$ consists only of edges and~$S(A,B)$ is a matching and $G[A] = K_r$.
    Then~$G$ is not arc-transitive.
    Moreover, the $K_r$-expansion of~$G$ is not vertex-transitive.
\end{corollary}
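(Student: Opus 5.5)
The plan is to prove the first assertion, that $G$ is not arc-transitive, and then obtain the second from \cref{lem:expansion-edge-transitive}: since $r\ge 3$, the $K_r$-expansion of~$G$ is vertex-transitive if and only if $G$ is arc-transitive.

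For non-arc-transitivity, let $e=xy$ be an edge with both ends in $A\sm B$; one exists because $G[A]=K_r$ and $r\ge 3$. Since $S(A,B)$ consists only of edges, every vertex of $A$ lies in $A\sm B$. The separator edges form a matching, so at most one of them is incident with each vertex of $A$. Because $G$ is $r$-regular and each vertex of $A$ already has $r-1$ neighbours in the clique $G[A]$, each vertex of $A$ has exactly one neighbour outside $A$. That neighbour is reached via a separator edge, so $S(A,B)$ consists of exactly $r$ edges, one at each vertex of~$A$.

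Now compare $e$ with a separator edge $f=ab$, where $a\in A$ and $b\in B\sm A$. Edge $e$ lies in the $r$-clique $G[A]$, so $x$ and $y$ have at least $r-2\ge 1$ common neighbours. I would show that $f$ lies in no triangle. A common neighbour of $a$ and $b$ inside $A$ would have to be adjacent to $b$, which would put a second separator edge at $b$ and violate the matching condition. A common neighbour outside $A$ would be a second neighbour of $a$ outside $A$, which is impossible by the previous paragraph. Hence $f$ lies in no triangle while $e$ lies in at least one. Automorphisms preserve the number of triangles containing an edge, so no automorphism maps the arc $(x,y)$ to $(a,b)$, and $G$ is not arc-transitive. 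The moreover part then follows from \cref{lem:expansion-edge-transitive} as described at the start.

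The step needing the most care is pinning down the definition of a mixed-$r$-separation: that "$S(A,B)$ consists only of edges" really means no separator vertices, and that every separator edge has one end in $A\sm B$ and the other in $B\sm A$. With that settled, the counting argument is routine.
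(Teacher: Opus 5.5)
Your proof is correct and is essentially the paper's argument. The paper notes that for $u\in A$ one has $G[N(u)]=K_1\sqcup K_{r-1}$, which rests on the same regularity and matching facts you check. Hence the arc from $u$ to its unique neighbour outside $A$ (an edge in no triangle) cannot be mapped to an arc from $u$ into the clique (an edge in a triangle), which is the same triangle-based invariant you use. Both arguments then obtain the statement about the $K_r$-expansion from \cref{lem:expansion-edge-transitive}.
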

\begin{proof}    
    By \cref{lem:expansion-edge-transitive}, it suffices to show that~$G$ is not arc-transitive.
    Indeed, if~$u$ is a vertex in~$A$, then $G[N(u)]=K_1 \sqcup K_{r-1}$ with $r-1 \geq 2$.
    Let~$v_1$ be the isolated vertex in~$G[N(u)]$ and let~$v_2 \neq v_1$ be another vertex in~$N(u)$.
    Then there is no automorphism of~$G$ that maps~$(u,v_1)$ to~$(u,v_2)$.
\end{proof}

\begin{corollary}
    \label{cor:iterated-expansion-not-transitive}
    Let $r \geq 3$.
    The $K_r$-expansion of an $r$-regular graph~$G$ is not arc-transitive.
    In particular, the $K_r$-expansion of the $K_r$-expansion of an $r$-regular graph~$G$ is not vertex-transitive.
\end{corollary}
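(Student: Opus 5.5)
The plan is to apply \cref{cor:expansion-mixed-separation-not-transitive} to $G'$, the $K_r$-expansion of~$G$, with a suitable mixed-$r$-separation. First, $G'$ is $r$-regular by \cref{obs:expansion-maintains-degrees}, since $G$ is $r$-regular. Next, pick any vertex $v\in V(G)$ and let $C_v$ be the corresponding $r$-clique in~$G'$.

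Set $A:=V(C_v)$ and $B:=V(G')\sm V(C_v)$. The pair $(A,B)$ is then a mixed-separation whose separator consists precisely of the $r$ edges $\hat e$ for the edges $e$ of $G$ at~$v$. These $r$ edges form a matching, because all edges $\hat e$ together form a perfect matching of~$G'$. Moreover $G'[A]=K_r$, so every hypothesis of \cref{cor:expansion-mixed-separation-not-transitive} holds.

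We should also check that $(A,B)$ is a genuine mixed-$r$-separation, that is, that $B$ has the required interior. This holds because $G$ is $r$-regular with $r\ge 3$, so $G$ has at least $r+1\ge 4$ vertices. Hence $B$ contains at least $r$ further cliques, and in particular some vertex not incident to the separator. This nondegeneracy check is the only point needing care; it depends on the precise definition of mixed-separation in~\cite{TetraDecomp}, which should be satisfied here since both sides have at least $r\ge 3$ vertices.

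\cref{cor:expansion-mixed-separation-not-transitive} then gives that $G'$ is not arc-transitive. Directly, for $u\in C_v$ we have $G'[N(u)]=K_1\sqcup K_{r-1}$, so no automorphism fixing $u$ can swap its matching neighbour with a clique neighbour. The ``in particular'' part follows by applying \cref{lem:expansion-edge-transitive} to the $r$-regular graph~$G'$: its $K_r$-expansion is vertex-transitive if and only if $G'$ is arc-transitive, which we have just ruled out.
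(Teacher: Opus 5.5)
Your proof is correct and follows the paper's route. The paper's proof only says that the statement follows from \cref{cor:expansion-mixed-separation-not-transitive}; you spell out the intended application to the $r$-regular $K_r$-expansion $G'$ with the edge-cut $(V(C_v),V(G')\sm V(C_v))$, and your use of \cref{lem:expansion-edge-transitive} for the second assertion is exactly the ``moreover'' clause of that corollary, which you could also quote directly.
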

\begin{proof}
    Follows from \cref{cor:expansion-mixed-separation-not-transitive}.
\end{proof}

\begin{corollary}
    \label{cor:cycle-of-triangles-not-transitive}
    A cycle of triangle-bags of length~$\geq 6$ is not arc-transitive.
    In particular, the $K_4$-expansion of a cycle of triangle-bags of length~$\ge 6$ is not vertex-transitive.
\end{corollary}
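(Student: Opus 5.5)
The plan is to find a combinatorial invariant of edges that is preserved by automorphisms but takes different values on two edges at the same vertex. By the Observation following \cref{dfn:transitive-cycles}, a cycle of triangle-bags of length~$\ell$ is isomorphic to~$C_\ell^2$. So write $V(G)=\{v_i : i\in\Z_\ell\}$, where $v_i$ is adjacent exactly to $v_{i\pm1}$ and~$v_{i\pm2}$. The graph $G$ is then 4-regular. The invariant I would use is the number of triangles containing a given edge, which equals the number of common neighbours of its ends.

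First I compute the two edge types. The common neighbours of $v_i$ and $v_{i+1}$ are $v_{i-1}$ and $v_{i+2}$. Intersecting $\{v_{i\pm1},v_{i\pm2}\}$ with $\{v_{i+1},v_{i+3},v_{i+4},v_i\}$ gives no further coincidences once $\ell\ge 7$. So the edge $v_iv_{i+1}$ lies in exactly two triangles. The common neighbours of $v_i$ and $v_{i+2}$ are computed the same way. Apart from $v_{i+1}$, an extra common neighbour appears only through an identification such as $v_{i-1}=v_{i+3}$ ($\ell=4$), $v_{i-1}=v_{i+4}$ or $v_{i-2}=v_{i+3}$ ($\ell=5$), or $v_{i-2}=v_{i+4}$ ($\ell=6$). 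Hence for $\ell\ge7$ the edge $v_iv_{i+2}$ lies in exactly one triangle.

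Any automorphism preserves the number of triangles through an edge. Therefore no automorphism can map the arc $(v_i,v_{i+1})$ to the arc $(v_i,v_{i+2})$, and $G$ is not arc-transitive. The ``in particular'' part then follows from \cref{lem:expansion-edge-transitive} with $r=4$: the $K_4$-expansion of the 4-regular graph~$G$ is vertex-transitive if and only if $G$ is arc-transitive, which it is not.

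The main obstacle is the boundary case $\ell=6$. There the identification $v_{i-2}=v_{i+4}$ gives $v_iv_{i+2}$ a second common neighbour, so the triangle count no longer separates the two edge types. Indeed, the complement of $C_6^2$ is the antipodal perfect matching, so $C_6^2\cong K_{2,2,2}$ is the octahedron, which is arc-transitive. The octahedron also appears in \cref{mainthm:transitive} as a cycle of $K_{2,2}$-bags of length~3, whose $K_4$-expansion is listed as vertex-transitive. So I expect the statement to hold only for length $\ge7$. I would either restrict the corollary to that range or check whether the definition of a cycle of triangle-bags is meant to exclude $C_6^2$, for example through the distinctness of the adhesion-sets. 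For $\ell\ge7$ the argument above is complete.
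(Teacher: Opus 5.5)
Your argument is the same as the paper's. You count the triangles through each edge: adhesion edges $v_iv_{i+1}$ lie in two triangles and free edges $v_iv_{i+2}$ in one. So $G$ is not edge-transitive, hence not arc-transitive, and \cref{lem:expansion-edge-transitive} gives the $K_4$-expansion part.

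Your caveat about length~6 is also correct, and the error lies in the paper, not in your proof. The paper's proof claims that every free edge lies in exactly one triangle. For $\ell=6$ this fails: the free edges form the two triangles $v_0v_2v_4$ and $v_1v_3v_5$, and every edge of $C_6^2$ lies in exactly two triangles. Indeed $C_6^2\cong K_{2,2,2}$ is the octahedron, which is arc-transitive. Its $K_4$-expansion is vertex-transitive and is even listed in \cref{mainthm:transitive}~\ref{mainitm:transitive-6}, as the $K_4$-expansion of a cycle of $K_{2,2}$-bags of length~3.

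The definition does not exclude this case. The six triangle-bags $\{v_i,v_{i+1},v_{i+2}\}$ have disjoint adhesion-sets at non-consecutive edges of~$O$, and the graph is 4-connected. So the corollary is true exactly for length $\ge 7$, which is what you prove. The error has no downstream effect: in \cref{cor:symmetric-crossing} and \cref{mainthm:transitive} the octahedron is covered anyway as a cycle of $K_{2,2}$-bags of length~3.

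One small slip in your computation: you intersect with $\{v_i,v_{i+1},v_{i+3},v_{i+4}\}$, but that is $N(v_{i+2})$. The neighbourhood you need is $N(v_{i+1})=\{v_{i-1},v_i,v_{i+2},v_{i+3}\}$. With this set the only possible extra coincidence is $v_{i-2}=v_{i+3}$, at $\ell=5$. So $v_iv_{i+1}$ lies in exactly two triangles for all $\ell\ge 6$, and your conclusion stands.
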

\begin{proof}
    Let~$G$ be a cycle of triangle-bags of length~$\ge 6$, and let~$\cO$ be the corresponding cycle-decomposition.
    Note that every free edge of~$\cO$ is contained in exactly one triangle of~$G$, while every other edge of~$G$ is contained in exactly two triangles.
    So,~$G$ is not edge-transitive.
    In particular, it is not arc-transitive.
    By \cref{lem:expansion-edge-transitive}, the $K_4$-expansion of a cycle of triangle-bags of length~$\ge 6$ is not vertex-transitive. 
\end{proof}

\begin{corollary}
    \label{cor:symmetric-crossing}
    Let $G$ be a finite graph that is arc-transitive, 4-connected, 4-regular and has two tetra-separations that cross.
    Then~$G$ is the line graph of the cube or a cycle of~$K_{2,2}$-bags of length~$\ge 3$.
\end{corollary}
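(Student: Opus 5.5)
Arc-transitivity implies vertex-transitivity (every vertex lies on an edge, since $G$ is 4-regular). So I will start by applying \cref{cor:transitive-crossing} to~$G$. This gives five possible outcomes: a cycle alternating between $K_4$-bags and $C_4$-bags of length~$\ge 8$, a cycle of triangle-bags of length~$\ge 6$, a cycle of $K_{2,2}$-bags of length~$\ge 3$, a cycle of $K_4$-bags of length~$\ge 4$, or the line graph of the cube. The last outcome and the $K_{2,2}$-outcome are exactly the permitted conclusions. It therefore remains to exclude the other three outcomes using arc-transitivity, or equivalently edge-transitivity together with vertex-transitivity.

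\textbf{Cycle of triangle-bags.} This case is settled directly by \cref{cor:cycle-of-triangles-not-transitive}: a cycle of triangle-bags of length~$\ge 6$ is not arc-transitive.

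\textbf{Cycle of $K_4$-bags of length~$\ell\ge 4$.} This graph is $K_2\boxtimes C_\ell$. I would separate the edges by the number of triangles containing them, as in \cref{cor:cycle-of-triangles-not-transitive}. An edge inside an adhesion-set (a "rung" $\{x_i,y_i\}$) lies in the triangles $x_iy_iz$ for the four vertices $z$ in the two neighbouring adhesion-sets, so in four triangles. An edge $x_ix_{i+1}$ of a bag lies only in the triangles $x_ix_{i+1}y_i$ and $x_ix_{i+1}y_{i+1}$, because $\ell\ge 4$ prevents any common neighbour outside bag~$i$. These counts differ, so $G$ is not edge-transitive and hence not arc-transitive.

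\textbf{Cycle alternating between $K_4$-bags and $C_4$-bags of length~$\ge 8$.} Each vertex lies in exactly one $K_4$-bag, and its fourth edge is an edge of a $C_4$-bag. Edges inside a $K_4$ lie in triangles. I expect the $C_4$-bag edges to lie in no triangle, because the adhesion-sets are disjoint and the length is at least~8. This should be checked against the picture: a $C_4$-bag consists of two adhesion pairs joined by a matching, and the pairs themselves are non-adjacent within that bag but adjacent via the $K_4$. Granting this, the neighbourhood of a vertex induces $K_1\sqcup K_3$, and the argument of \cref{cor:expansion-mixed-separation-not-transitive} applies. No automorphism fixing~$u$ can map the isolated neighbour to a neighbour inside the $K_3$, so $G$ is not arc-transitive.

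\textbf{Main obstacle.} The hardest part is the bookkeeping for the alternating case. There I must confirm that the $C_4$-bag edges lie in no triangle and that the neighbourhood really induces $K_1\sqcup K_3$, which uses the length bound~$\ge 8$. The remaining cases are immediate.
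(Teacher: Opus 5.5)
Your proof is correct and mostly follows the paper's. Both arguments apply \cref{cor:transitive-crossing} and then discard the three unwanted outcomes. The triangle-bag case goes through \cref{cor:cycle-of-triangles-not-transitive} in both. Your $K_1\sqcup K_3$-neighbourhood argument for the alternating case is exactly \cref{cor:expansion-mixed-separation-not-transitive}, applied with $A$ a $K_4$-bag. The four edges leaving a $K_4$-bag form a matching, which settles your ``main obstacle'' immediately. Also, length $\ge 4$ already suffices for that neighbourhood claim; you do not need $\ge 8$.

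The one genuine difference is the cycle of $K_4$-bags. The paper dismisses it in one line: $K_2\boxtimes C_\ell$ is 5-regular, which contradicts the 4-regularity hypothesis. You instead count triangles: 4 on a rung, 2 on every other edge when $\ell\ge 4$. This is correct and shows the case is not arc-transitive even without that hypothesis. So your route proves the corollary with ``4-regular'' dropped, at the cost of a small computation.

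One small inaccuracy: arc-transitivity is not equivalent to edge-transitivity together with vertex-transitivity in general. Half-arc-transitive graphs, such as the 4-regular Holt graph, are counterexamples. However, you only use the true direction, that arc-transitive graphs are edge-transitive, so nothing in your argument breaks.
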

\begin{proof}
    This follows from \cref{cor:transitive-crossing} together with the following observations.
    First, a cycle of~$K_4$-bags of length~$\ge 4$ is not 4-regular.
    Second, a cycle alternating between $K_4$-bags and $C_4$-bags of length~$\ge 8$ is not arc-transitive by \cref{cor:expansion-mixed-separation-not-transitive}.
    Finally, a cycle of triangle-bags of length~$\ge 6$ is not arc-transitive by \cref{cor:cycle-of-triangles-not-transitive}.
\end{proof}

\begin{lemma}
\label{lem:transitive-partition-into-K4}
    Let~$G$ be a vertex-transitive essentially 5-connected finite graph. 
    Then either~$G$ is quasi-5-connected or there is a partition~$\cP$ of~$V(G)$ into vertex-sets of~$K_4$'s such that the edges in $G$ with ends in different classes of~$\cP$ form a perfect matching in~$G$.
\end{lemma}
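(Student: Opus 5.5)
Suppose $G$ is not quasi-5-connected. Then $G$ has some 4-separation $(A,B)$ with $|A\sm B|\ge 2$ and $|B\sm A|\ge 2$. The plan is to turn $(A,B)$ into a tetra-separation, apply essential 5-connectivity to that tetra-separation, and then spread the resulting local structure over all of $G$ by vertex-transitivity.

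Essential 5-connectivity includes 4-connectivity. So I would first pass from the 4-separation $(A,B)$ to a tetra-separation $(A_1,A_2)$ in the sense of \cite{TetraDecomp}, using the standard reduction there: move a separator vertex to a side where it has $\le 1$ neighbour, replacing it by the corresponding edge. By essential 5-connectivity, $S(A_1,A_2)$ then consists of four edges, and some side, say $A_1$, satisfies $G[A_1]=K_4$.

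Next I would read off the local structure. Each vertex of $A_1$ has three neighbours inside $A_1$. By 4-connectivity, every vertex of $A_1$ must be incident with one of the four separator edges. Since there are only four such edges, the vertices of $A_1$ are matched to distinct vertices outside $A_1$, so each vertex of $A_1$ has degree exactly~4. By vertex-transitivity, $G$ is 4-regular and every vertex $v$ lies in a $K_4$. In fact $G[N(v)]\cong K_1\sqcup K_3$, since this holds for the vertices of $A_1$: their outside neighbour is not adjacent to the other three vertices of $A_1$, because the separator edges form a matching.

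The main point is to show that every vertex lies in a \emph{unique} $K_4$ and that these $K_4$'s are pairwise disjoint. Both follow from $G[N(v)]\cong K_1\sqcup K_3$.
\begin{itemize}
\item Any $K_4$ containing $v$ consists of $v$ and a triangle in $N(v)$. The only triangle in $K_1\sqcup K_3$ is the $K_3$, so the $K_4$ at $v$ is unique.
\item If two distinct $K_4$'s shared a vertex $v$, this would contradict uniqueness.
\end{itemize}
Hence the $K_4$'s partition $V(G)$ into a family $\cP$.

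Finally, each vertex has exactly one edge leaving its class, namely the edge to the isolated vertex of $G[N(v)]$. So the edges of $G$ between different classes of $\cP$ form a perfect matching.

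I expect the main obstacle to be the first step: justifying that a non-quasi-5-connected, 4-connected graph has a tetra-separation whose existence triggers the clause in the definition of essentially 5-connected. This depends on the precise definitions of tetra-separation and quasi-5-connectivity in \cite{TetraDecomp}. I would try to cite the corresponding lemma there, or argue directly that the reduction of $(A,B)$ keeps both sides of size $\ge 2$. The remaining steps are routine.
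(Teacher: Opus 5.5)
Your proposal is correct and follows essentially the same route as the paper. The paper obtains the tetra-separation in one step by citing \cite[Lemma~2.3]{TetraDecomp} (a graph without tetra-separations is quasi-5-connected), which is exactly the step you flag; your alternative via the left-right-reduction and \cite[Key Lemma~5.10]{TetraDecomp} works too, since $G$ is 4-connected. The rest matches the paper: you read off $G[N(u)]=K_1\sqcup K_3$ and 4-regularity on the $K_4$-side, transfer this by vertex-transitivity, and deduce the partition into $K_4$'s with a perfect matching of cross-edges, and your uniqueness/disjointness argument spells out details the paper leaves implicit.
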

\begin{proof}
    If~$G$ does not have a tetra-separation, then~$G$ is quasi-5-connected by \cite[Lemma~2.3]{TetraDecomp}.
    Hence, we may assume that~$G$ has a tetra-separation~$(A,B)$.
    Since~$G$ is essentially 5-connected, the separator of $(A,B)$ consists of four edges and there is a side, say~$A$, that induces a~$K_4$ in~$G$.
    Let~$u \in A$ and observe that $G[N(u)]=K_1 \sqcup K_3$; in particular,~$u$ is contained in exactly one~$K_4$.
    By vertex-transitivity, so is every vertex of~$G$.
    It follows that~$G$ admits a partition of its vertex-set into the vertex-sets of~$K_4$'s.
    Moreover,~$G$ is 4-regular since~$u$ has degree~4 in~$G$. Then it follows that the edges between distinct partition-classes of~$\cP$ form a perfect matching in~$G$.
\end{proof}

\begin{lemma}
\label{lem:transitive-contract-essentially-5-connected}
    Let~$G$ be a vertex-transitive essentially 5-connected finite graph. Then~$G$ is one of the following:
    \begin{itemize}
        \item quasi-5-connected;
        \item a cycle alternating between $K_4$-bags and $C_4$-bags of length $\ell \in \{4,6\}$;
        \item the $K_4$-expansion of an arc-transitive 4-connected 4-regular graph.
    \end{itemize}
\end{lemma}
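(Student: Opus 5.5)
We start from \cref{lem:transitive-partition-into-K4}. If $G$ is quasi-5-connected we are done. Otherwise there is a partition $\cP$ of $V(G)$ into vertex-sets of $K_4$'s such that the edges between different classes form a perfect matching. In particular $G$ is 4-regular and every vertex lies in exactly one $K_4$. We then form the contraction-minor $H:=G/\cP$. Every class sends exactly four matching edges out, so $H$ is a 4-regular multigraph. If $H$ is simple, then $G$ is exactly the $K_4$-expansion of~$H$.

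\textbf{Case $H$ simple.} By \cref{lem:expansion-edge-transitive}, vertex-transitivity of the $K_4$-expansion $G$ yields that $H$ is arc-transitive. By \cref{4reg4con}, the 4-regular graph $H$ is 4-connected; alternatively, $H$ is vertex-transitive and we apply \cref{GodsilResult} directly. This gives the third outcome.

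\textbf{Case $H$ has parallel edges.} By vertex-transitivity of $G$, and since $\Aut(G)$ acts on the unique partition $\cP$, every class of $\cP$ has the same multiset of edge-multiplicities towards other classes. A multiplicity~$4$ would make $G$ equal to two $K_4$'s joined by a matching. That graph has only 8 vertices and is not essentially 5-connected in the required way, or it is excluded directly; we must check this small case.

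A multiplicity~$3$ gives a 3-separation, contradicting 4-connectivity. So the multiplicities are $(2,2)$ or $(2,1,1)$. The type $(2,1,1)$ should be ruled out as follows. Two classes $P,Q$ joined by two edges form a set $P\cup Q$ whose boundary consists of four edges. This set yields a tetra-separation $(P\cup Q, V\sm(P\cup Q))$ whose small side is not a $K_4$, so essential 5-connectivity forces its other side to be a $K_4$. Here one needs care to verify the separation is indeed a tetra-separation: both sides must be large enough and the matching condition must hold.

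In the remaining type $(2,2)$ the multigraph $H$ is a cycle with all edges doubled. Then $G$ is a cycle of $K_4$'s joined consecutively by 2-matchings. The adhesion structure shows that this is a cycle alternating between $K_4$-bags and $C_4$-bags: the $C_4$-bags consist of two matching edges together with the two $K_4$-edges spanned by their ends. Its length is twice the length of the cycle in~$H$.

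Finally we bound the length. If $H$ is a cycle of length $\ge 4$, then two non-adjacent double edges of $H$ give a separation of $G$ into two unions of consecutive $K_4$'s with four edges in the separator. Neither side is a $K_4$, so this is a tetra-separation contradicting essential 5-connectivity. Hence $H$ has length~$2$ or~$3$, that is, $\ell\in\{4,6\}$, matching the observation that these lengths are essentially 5-connected.

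The main obstacle is verifying that these candidate four-edge separations are genuine tetra-separations in the sense of~\cite{TetraDecomp}. This covers the $(2,1,1)$ case and the length bound, and includes the degenerate small cases.
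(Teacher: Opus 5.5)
\textbf{Verdict.} Your route is essentially the paper's. You take the partition $\cP$ into $K_4$'s from \cref{lem:transitive-partition-into-K4} and split by the number of edges joining two classes:
\begin{itemize}
\item three edges give a 2-separator;
\item exactly two edges give the four-edge cut around $P\cup Q$, whose other side must be a $K_4$;
\item all multiplicities one give the $K_4$-expansion, handled by \cref{lem:expansion-edge-transitive} plus 4-connectivity of~$H$.
\end{itemize}

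\textbf{The multiplicity-4 case.} This is the one step you get wrong: you suggest this case might be excluded, but it cannot be. Two $K_4$'s joined by a perfect matching is $K_4\square K_2$. It is vertex-transitive and essentially 5-connected. It is exactly the cycle alternating between $K_4$-bags and $C_4$-bags of length~$4$: the $C_4$-bags are the two 4-cycles each formed by two matching edges and the two $K_4$-edges between their ends. It is the only source of the outcome $\ell=4$.

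Your closing claim that $H$ may have ``length~$2$'' in type $(2,2)$ is inconsistent with this. In type $(2,2)$ every class has two distinct neighbouring classes, so $H$ has at least three vertices. Two classes doubly joined to each other would be multiplicity~$4$, not type $(2,2)$. The correct bookkeeping is: multiplicity $4 \Rightarrow \ell=4$, and multiplicity $2 \Rightarrow \ell=6$.

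\textbf{Simplifications.} Your $(2,1,1)$ argument already proves more than you use. For any two classes $P,Q$ joined by exactly two edges, $V(G)\setminus(P\cup Q)$ is a single class~$R$. So $P$ and $Q$ each send their other two edges to~$R$, and $G$ is the length-$6$ outcome outright. This is how the paper proceeds. It makes three parts of your argument unnecessary: the appeal to vertex-transitivity for a uniform multiplicity type, the split into types $(2,1,1)$ and $(2,2)$, and the separate length bound.

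The verification you call the main obstacle is immediate:
\begin{itemize}
\item the cut edges lie in the perfect matching of cross-edges, so the matching-condition holds;
\item the separator contains no vertices, so the degree-condition is vacuous;
\item both sides are unions of classes, so each has at least four vertices.
\end{itemize}
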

\begin{proof}
    Assume that~$G$ is not quasi-5-connected.
    Then by \cref{lem:transitive-partition-into-K4}, there is a partition~$\cP$ of~$V(G)$ into the vertex-sets of~$K_4$'s such that edges between distinct partition-classes of~$\cP$ form a perfect matching in~$G$.
    We distinguish the following cases.

    \casen{Case 1:} there are two parts~$P_1 \neq P_2$ of~$\cP$ such that there are at least three $P_1$--$P_2$ edges~$e_1,e_2,e_3$ in~$G$.
    Let~$f_i \notin \{e_1,e_2,e_3\}$ be the remaining edge incident with a vertex in~$P_i$.
    By 4-connectivity of~$G$, $\{f_1,f_2\}$ is not a mixed-2-separator. 
    It follows that $f_1 = f_2$, so~$G$ is a cycle alternating between $K_4$-bags and $C_4$-bags of length~4.

    \casen{Case 2:} there are two parts~$P_1 \neq P_2$ of~$\cP$ such that there are exactly two $P_1$--$P_2$ edges in~$G$. 
    Let~$e_1,e_2$ be the two edges incident with~$P_1$ but not with~$P_2$, and let~$f_1,f_2$ be the two edges incident with~$P_2$ but not with~$P_1$.
    We define $(A,B):=(P_1 \cup P_2, V(G) \sm (P_1 \cup P_2))$.
    Then $S(A,B)=\{e_1,e_2,f_1,f_2\}$, so~$(A,B)$ is a tetra-separation.
    By assumption, there is a side of~$(A,B)$ that induces a~$K_4$.
    Since $|A|=8$, we must have $G[B] = K_4$.
    It follows that~$G$ is a cycle alternating between $K_4$-bags and $C_4$-bags of length~$6$.

    \casen{Case 3:} for every two parts~$P_1 \neq P_2$ of~$\cP$, there is at most one $P_1$--$P_2$ edge in~$G$. Then we define $H:=G / \cP$, which is a 4-regular graph.
    Moreover, observe that~$G$ is the $K_4$-expansion of~$H$.
    Then by \cref{lem:expansion-edge-transitive},~$H$ is arc-transitive. By \cref{4reg4con},~$H$ is 4-connected.
\end{proof}

\begin{corollary}
\label{cor:symmetric-contract-essentially-5-connected}
    Let~$G$ be an arc-transitive essentially 5-connected graph. Then~$G$ is quasi-5-connected.
\end{corollary}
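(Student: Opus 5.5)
Since every arc-transitive graph without isolated vertices is vertex-transitive, \cref{lem:transitive-contract-essentially-5-connected} applies to~$G$. So $G$ is quasi-5-connected, or a cycle alternating between $K_4$-bags and $C_4$-bags of length $\ell\in\{4,6\}$, or the $K_4$-expansion of an arc-transitive 4-connected 4-regular graph. It suffices to rule out the last two outcomes under arc-transitivity.

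\emph{The $K_4$-expansion case.} Suppose $G$ is the $K_4$-expansion of a 4-regular graph $H$. Then $G$ is 4-regular, and every vertex $u$ lies in a unique $K_4$-class $P\in\cP$ with $G[N(u)]=K_1\sqcup K_3$. Indeed, the three neighbours of $u$ in $P$ are pairwise adjacent, and its matching neighbour lies in another class and has no further neighbours in~$P$, because the cross edges form a perfect matching. The automorphism group of $G$ fixes $u$ and acts on $N(u)$ preserving $G[N(u)]$. Hence no automorphism maps the arc $(u,v_1)$, with $v_1$ the isolated vertex of $G[N(u)]$, to $(u,v_2)$ with $v_2\in P$. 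This is the argument of \cref{cor:iterated-expansion-not-transitive}, or of \cref{cor:expansion-mixed-separation-not-transitive} applied to the mixed-4-separation $(P,V(G)\sm P)$, whose separator is four matching edges. So $G$ is not arc-transitive, a contradiction.

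\emph{The alternating-cycle case.} In a cycle alternating between $K_4$-bags and $C_4$-bags, the same local picture holds. The $K_4$-bags partition $V(G)$ into $K_4$'s, and each vertex has exactly one neighbour outside its $K_4$. This holds because the $C_4$-bags have disjoint adhesion-sets with the $K_4$-bags and contribute a single edge at each vertex: each $C_4$-bag consists of two adhesion-edges of the neighbouring $K_4$'s plus two cross edges. So again $G[N(u)]=K_1\sqcup K_3$ for every $u$, and the arcs from $u$ into its $K_4$ cannot be mapped to the arc from $u$ to its isolated neighbour. Thus $G$ is not arc-transitive. This is the same argument as in the proof of \cref{cor:symmetric-crossing}, via \cref{cor:expansion-mixed-separation-not-transitive}.

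The only real check is the local structure $G[N(u)]=K_1\sqcup K_3$ in the alternating-cycle case for $\ell=4$. Here two $K_4$'s are joined by three or four edges, depending on the reading of Case~1 of \cref{lem:transitive-contract-essentially-5-connected}, so one must confirm that the cross edges still form a perfect matching between the $K_4$'s. \cref{lem:transitive-partition-into-K4} guarantees exactly this. Once that holds, the neighbourhood argument goes through verbatim and yields that $G$ is quasi-5-connected.
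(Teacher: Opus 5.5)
Your proof is correct and is essentially the paper's argument: invoke \cref{lem:transitive-contract-essentially-5-connected}, then discard the alternating $K_4$/$C_4$ cycles and the $K_4$-expansions as non-arc-transitive because every vertex has neighbourhood $K_1\sqcup K_3$, exactly as in \cref{cor:expansion-mixed-separation-not-transitive} and \cref{cor:iterated-expansion-not-transitive}. Two small wording fixes: it is the stabiliser of $u$, not all of $\Aut(G)$, that fixes $u$; and in Case~1 there are exactly four $P_1$--$P_2$ edges (since $f_1=f_2$), so the hedging about three edges is unnecessary.
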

\begin{proof}
    Follows from \cref{lem:transitive-contract-essentially-5-connected} together with the following observations.
    First, a cycle alternating between $K_4$-bags and $C_4$-bags of length $\ell \in \{4,6\}$ is not arc-transitive by \cref{cor:expansion-mixed-separation-not-transitive}.
    Second, the $K_4$-expansion of a graph is not arc-transitive by \cref{cor:iterated-expansion-not-transitive}.
\end{proof}

\begin{proposition}
    \label{4conCase:transitive}
    A 4-connected finite graph $G$ is vertex-transitive if and only if $G$ either is
    \begin{enumerate}
        \item\label{4conCase:transitive-1} a quasi-5-connected vertex-transitive graph,
        \item\label{4conCase:transitive-2} the $K_4$-expansion of a quasi-5-connected arc-transitive graph,
        \item\label{4conCase:transitive-3} a cycle of $K_4$-bags of length $\ge 4$ (\cref{fig:CycleOfK4s}),
        \item\label{4conCase:transitive-4} a cycle alternating between $K_4$-bags and $C_4$-bags of length $\ge 4$ (\cref{fig:CycleAlternating}),
        \item\label{4conCase:transitive-5} a cycle of triangle-bags of length $\ge 6$ (\cref{fig:cycleOfTriangles}),
    \end{enumerate}
    \noindent or $G$ is one of the following graphs or a $K_4$-expansion thereof:
    \begin{enumerate}[resume]
        \item\label{4conCase:transitive-6} a cycle of $K_{2,2}$-bags of length $\ge 3$ (\cref{fig:CycleOfK22s}),
        \item\label{4conCase:transitive-7} the line-graph of the cube (\cref{fig:LineGraphCube}).
    \end{enumerate}
\end{proposition}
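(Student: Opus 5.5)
The proof splits into the two directions, and the forward direction is where the work lies. Let $G$ be a vertex-transitive 4-connected finite graph. We distinguish whether $G$ has two crossing tetra-separations.

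If it does, \cref{cor:transitive-crossing} places $G$ in one of the following classes:
\begin{itemize}
\item a cycle alternating between $K_4$-bags and $C_4$-bags of length $\ge 8$, which is \cref{4conCase:transitive-4};
\item a cycle of triangle-bags of length $\ge 6$, which is \cref{4conCase:transitive-5};
\item a cycle of $K_{2,2}$-bags of length $\ge 3$, which is \cref{4conCase:transitive-6};
\item a cycle of $K_4$-bags of length $\ge 4$, which is \cref{4conCase:transitive-3};
\item the line graph of the cube, which is \cref{4conCase:transitive-7}.
\end{itemize}

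Otherwise every tetra-separation is totally-nested. Then \cref{cor:transitive-essentially-5-connected} shows that $G$ is essentially 5-connected, and \cref{lem:transitive-contract-essentially-5-connected} yields three possibilities. Either $G$ is quasi-5-connected, giving \cref{4conCase:transitive-1}; or $G$ is a cycle alternating between $K_4$- and $C_4$-bags of length $4$ or $6$, giving \cref{4conCase:transitive-4}; or $G$ is the $K_4$-expansion of an arc-transitive 4-connected 4-regular graph $H$.

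In the last case I analyse $H$ itself, and this is the main step.
\begin{itemize}
\item If $H$ has two crossing tetra-separations, \cref{cor:symmetric-crossing} gives that $H$ is the line graph of the cube or a cycle of $K_{2,2}$-bags of length $\ge 3$. Hence $G$ is a $K_4$-expansion as in \cref{4conCase:transitive-6} or \cref{4conCase:transitive-7}.
\item Otherwise all tetra-separations of $H$ are totally-nested. Since $H$ is vertex-transitive, \cref{cor:transitive-essentially-5-connected} makes $H$ essentially 5-connected. Then \cref{cor:symmetric-contract-essentially-5-connected} makes $H$ quasi-5-connected, giving \cref{4conCase:transitive-2}.
\end{itemize}
The subtle point is that \cref{cor:symmetric-crossing} and \cref{cor:symmetric-contract-essentially-5-connected} exist precisely to exclude iterated expansions and non-arc-transitive options, so this step should go through cleanly.

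For the backward direction, every listed graph must be shown to be 4-connected and vertex-transitive. For \cref{4conCase:transitive-1} there is nothing to show. For \cref{4conCase:transitive-2} and for the $K_4$-expansions in \cref{4conCase:transitive-6} and \cref{4conCase:transitive-7}, \cref{lem:expansion-edge-transitive} reduces vertex-transitivity to arc-transitivity of the base graph. The base graphs in \cref{4conCase:transitive-6} and \cref{4conCase:transitive-7} are arc-transitive by inspection: a cycle of $K_{2,2}$-bags is a lexicographic product $C_\ell[\overline{K_2}]$, and the line graph of the cube is arc-transitive since the cube's symmetries act transitively on paths of length two. 4-connectivity then follows from \cref{4reg4con}, since every graph in the list is regular of degree $\ge 4$ and vertex-transitive.

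Vertex-transitivity of the explicit cycle families in \cref{4conCase:transitive-3}--\cref{4conCase:transitive-5} comes from the rotations and reflections of the cycle together with the symmetries inside each bag. For example, a cycle of triangle-bags is $C_\ell^2$, and a cycle of $K_4$-bags is $K_2\boxtimes C_\ell$. For the alternating $K_4$/$C_4$ family, one exhibits an automorphism swapping the two adhesion-pairs of a $K_4$-bag and the two vertices within each pair. I expect these verifications to be routine.
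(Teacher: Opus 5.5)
Your proposal is correct and follows the paper's proof essentially step for step. It uses the same split into crossing and totally-nested tetra-separations via \cref{cor:transitive-crossing}, \cref{cor:transitive-essentially-5-connected} and \cref{lem:transitive-contract-essentially-5-connected}, the same second-level analysis of the base graph $H$ via \cref{cor:symmetric-crossing} and \cref{cor:symmetric-contract-essentially-5-connected}, and the same use of \cref{lem:expansion-edge-transitive} for the converse. The one addition, your 4-connectivity check in the backward direction, is harmless but unnecessary, since 4-connectivity is already a hypothesis of the statement.
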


\begin{proof}
    Let $G$ be a 4-connected finite graph.

    ($\Rightarrow$).
    Assume that $G$ is vertex-transitive.
    If~$G$ has two tetra-separations that cross, then we are done by \cref{cor:transitive-crossing}.
    Otherwise, every tetra-separation of~$G$ is totally-nested.
    Then~$G$ is essentially 5-connected by \cref{cor:transitive-essentially-5-connected}.
    By \cref{lem:transitive-contract-essentially-5-connected},~$G$ is either quasi-5-connected, or a cycle alternating between $K_4$-bags and $C_4$-bags of length $\ell \in\{4,6\}$, or the $K_4$-expansion of an arc-transitive 4-connected 4-regular graph.
    In the first two cases, we are done.
    So assume that~$G$ is the $K_4$-expansion of an arc-transitive 4-connected 4-regular graph~$H$.
    In particular,~$H$ is vertex-transitive.

    If~$H$ has two tetra-separations that cross, then we apply \cref{cor:symmetric-crossing} to get that~$H$ is the line graph of the cube or that~$H$ is a cycle of~$K_{2,2}$-bags of length~$\geq 3$. 
    Here we are done.
    Otherwise, every tetra-separation of~$H$ is totally-nested. 
    By \cref{cor:transitive-essentially-5-connected},~$H$ is essentially 5-connected.
    By \cref{cor:symmetric-contract-essentially-5-connected},~$H$ quasi 5-connected. 
    Thus, $G$ is the $K_4$-expansion of an arc-transitive quasi-5-connected graph, and we are done.

    ($\Leftarrow$). 
    Clearly, if~$G$ is one of the graphs in~\ref{4conCase:transitive-1}, \ref{4conCase:transitive-3}, \ref{4conCase:transitive-4}, \ref{4conCase:transitive-5}, then~$G$ is vertex-transitive.
    If~\ref{4conCase:transitive-2} $G$ is the $K_4$-expansion of a quasi-5-connected arc-transitive graph, then~$G$ is vertex-transitive by \cref{lem:expansion-edge-transitive}.
    Both~\ref{4conCase:transitive-6} a cycle of~$K_{2,2}$-bags and~\ref{4conCase:transitive-7} the line graph of the cube are arc-transitive.
    If~$G$ is a $K_4$-expansion of~\ref{4conCase:transitive-6} or~\ref{4conCase:transitive-7}, then~$G$ is vertex-transitive by \cref{lem:expansion-edge-transitive}.    
\end{proof}

\section{Analysing 3-regular quasi-4-connectivity}\label{sec:3regQ4C}

The \defn{line graph $L = L(G)$} of a graph~$G$ is the graph on the vertex set $V(L) := E(G)$ such that two vertices in~$L$ are adjacent if and only if their corresponding edges in~$G$ share an endvertex.

A graph~$G$ is \defn{internally-4-connected} if it is 3-connected and every 3-separation has a side that induces a claw. Note that internally-4-connected graphs are quasi-4-connected.

\begin{keylemma}
\label{lem:transitive-reduction-to-int-4-con}
    Let $G$ be a 3-regular quasi-4-connected graph.
    Then either $G$ is internally-4-connected and has girth~$\geq 4$, or $G=K_3\square K_2$.
\end{keylemma}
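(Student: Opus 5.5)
We argue directly from the definitions, in two steps. Let $G$ be 3-regular and quasi-4-connected, and let $(A,B)$ be a 3-separation of $G$. By quasi-4-connectivity we may assume $|A\sm B|=1$, say $A\sm B=\{a\}$. Since $a$ has degree~3 and all its neighbours lie in $A$, we get $N(a)=A\cap B$. So $G[A]$ is a claw plus possibly some edges inside the separator $S:=A\cap B$. The first step therefore reduces to showing that $S$ is independent whenever $G$ has girth $\ge 4$. The second step shows that if $G$ contains a triangle, then $G=K_3\square K_2$.

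For the first step, suppose $G$ has girth $\ge 4$. Then $N(a)$ contains no edge, since any edge in $N(a)$ together with $a$ would form a triangle. Hence $S$ is independent and $G[A]$ is exactly a claw. Every 3-separation thus has a side inducing a claw, so $G$ is internally-4-connected. Note that girth~$\ge 4$ is the only assumption used here.

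For the second step, assume $G$ contains a triangle $T=xyz$. First we dispose of small cases. If $G=K_4$, then $G$ has no 3-separation at all. In that case we should check whether $K_4$ counts as internally-4-connected under the convention in use, or whether it is excluded by some standing size assumption; I expect a small vertex-count hypothesis, or that $K_4$ is handled by convention, and I will state this.

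Otherwise $G\neq K_4$. Each of $x,y,z$ has exactly one neighbour outside $T$; call these $x',y',z'$.

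We first show $x',y',z'$ are distinct. Suppose two of them coincide, say $x'=y'$. Then $\{x',z\}$ separates $\{x,y\}$ from the rest of the graph, and the rest is nonempty: if $V(G)=\{x,y,z,x'\}$, then 3-regularity forces the edge $zx'$, giving $G=K_4$. This 2-separator contradicts 3-connectivity. If all three coincide, $G=K_4$ again.

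Next consider the separation $(A,B):=(T\cup\{x',y',z'\},\,V(G)\sm T)$. This is a 3-separation with separator $\{x',y',z'\}$ and $A\sm B=T$. By quasi-4-connectivity, $|B\sm A|=1$ whenever $|V(G)|>6$; if instead $|V(G)|=6$, then $B\sm A$ is empty.

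In the case $|B\sm A|=1$, let $w$ be the single vertex of $B\sm A$. Then $N(w)=\{x',y',z'\}$. Each of $x',y',z'$ already has neighbours $w$ and its partner in $T$, so the three of them span exactly $3\cdot 1/2$ edges among themselves, which is impossible by parity. This case therefore cannot occur, unless the separation is not proper; I will double-check the quasi-4-connectivity convention here.

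So $V(G)=T\cup\{x',y',z'\}$. Each of $x',y',z'$ needs two more neighbours, and these can only lie within $\{x',y',z'\}$. So $x'y'z'$ is a triangle, and $G=K_3\square K_2$.

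The main obstacle is the case bookkeeping around small graphs ($K_4$, and whether 3-separations require both sides to be proper). The triangle analysis itself is routine.
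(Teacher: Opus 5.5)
Your argument is correct and is essentially the paper's: a triangle forces the three edges leaving it to form a matching, and quasi-4-connectivity applied to a suitable 3-separation then pins $G$ down to $K_3\square K_2$. The differences are minor. You separate the whole triangle by $\{x',y',z'\}$ and rule out $|V(G)|=7$ by parity, while the paper separates $\{a,b\}$ by $\{a',b',v\}$ and gets $|V(G)|=6$ directly. Your first step also makes explicit the paper's tacit reduction from ``not internally-4-connected'' to ``has a triangle''.

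On your hedges: the 3-separations in the definition of quasi-4-connectivity are proper, exactly as you used. The real issue with $K_4$ is not internal-4-connectivity, which holds vacuously, but its girth~3. So $K_4$ is a genuine exception to the statement, and the paper also excludes it silently (``as $G\neq K_4$'').
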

\begin{proof}
    Suppose that~$G$ is not internally-4-connected.
    It suffices to show that either $G$ is a cycle of $C_4$-bags of length~3 or~$G$ has girth~$\geq 4$.
    Note that $G$ has $\ge 5$ vertices as $G\neq K_4$.
    There is a vertex~$v$ in~$G$ of degree~3 that has two adjacent neighbours~$a$ and~$b$.
    Let~$c$ be the third neighbour of~$v$.

    First, we show that $ac,bc \notin E(G)$.
    Assume~$ac \in E(G)$. Then the pair $(\{v,a,b,c\},V(G)\sm\{v,a\})$ is a 2-separation with separator $\{b,c\}$ since $|V(G)| \geq 5$, contradicting that~$G$ is 3-connected.

    It follows that $(A,B) := (\{v,a,b\},V(G)\sm\{v,a,b\})$ is a 3-edge-cut. Since~$G$ is 3-connected and since the set $N(\{a,b\})$ separates $\{a,b\}$ from~$c$, we must have $|N(\{a,b\})| \geq 3$.
    In particular, no two edges in~$S(A,B)$ share an endvertex.
    We define~$a',b'$ and~$v'$ such that $S(A,B)=\{aa',bb',vv'\}$.
    Note that $N(\{a,b\})=\{a',b',v\}$ separates $\{a,b\}$ from $V(G) \sm \{v,a,b,a',b'\}$. By quasi-4-connectivity of~$G$, it follows that $V(G)=\{v,a,b,v',a',b'\}$. And by 3-regularity of~$G$, it follows that $v'a'b'$ is a triangle. 
    So, $G=K_3\square K_2$.
\end{proof}

Recall that $E(v)$ denotes the set of all edges incident with a vertex $v$ in a graph~$G$.

\begin{lemma}
\label{lem:transitive-cut-to-separation}
    Let~$G$ be a connected graph and let $L=L(G)$ be the line graph of~$G$.
    If $(A,B)$ is an edge-cut in~$G$ with $E(G[A]) \neq \emptyset \neq E(G[B])$, then \[(A',B') := (E(G[A])\cup S(A,B)\:,\: E(G[B]) \cup S(A,B))\] is a genuine separation of~$L$. 
    If $(A,B)$ has order~$k$, then $(A',B')$ has order~$k$ as well.
\end{lemma}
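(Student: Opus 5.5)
We check the defining properties of a separation of $L$ directly, then verify genuineness and compute the order. Recall that $(A',B')$ is a separation of $L$ if $A'\cup B'=V(L)$ and $L$ has no edge between $A'\sm B'$ and $B'\sm A'$. It is genuine if both $A'\sm B'$ and $B'\sm A'$ are non-empty; its order is $|A'\cap B'|$. (Here we use the convention of \cite{TetraDecomp} that a separation's separator is $A'\cap B'$; no edges of $L$ are placed in the separator.)

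First, since $(A,B)$ is an edge-cut, $V(G)=A\sqcup B$. Hence every edge of $G$ lies in exactly one of $E(G[A])$, $E(G[B])$, $S(A,B)$. This gives $A'\cup B'=E(G)=V(L)$. Because $E(G[A])$ and $E(G[B])$ are disjoint from each other and from $S(A,B)$, we get $A'\cap B'=S(A,B)$, $A'\sm B'=E(G[A])$ and $B'\sm A'=E(G[B])$.

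Next, suppose $e\in E(G[A])$ and $f\in E(G[B])$ were adjacent in $L$. Then they would share an endvertex $x$ in $G$. But both ends of $e$ lie in $A$ and both ends of $f$ lie in $B$, so $x\in A\cap B=\emptyset$, which is impossible. Hence $L$ has no $(A'\sm B')$--$(B'\sm A')$ edge, and $(A',B')$ is a separation of $L$.

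Genuineness follows immediately from the hypothesis $E(G[A])\neq\emptyset\neq E(G[B])$, since these sets are exactly $A'\sm B'$ and $B'\sm A'$. Finally, the order of $(A',B')$ is $|A'\cap B'|=|S(A,B)|$, which is the order $k$ of the edge-cut. The argument has no real obstacle; the only care needed is to make sure the three edge classes partition $E(G)$, and this uses that $(A,B)$ partitions $V(G)$. Connectivity of $G$ is not actually needed for the argument.
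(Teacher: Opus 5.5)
Your proof is correct and follows the paper's argument essentially verbatim: you partition $E(G)$ into $E(G[A])$, $E(G[B])$ and $S(A,B)$ to get $A'\cup B'=V(L)$ and $A'\cap B'=S(A,B)$, you exclude $L$-edges between $E(G[A])$ and $E(G[B])$ because a shared endvertex would lie in $A\cap B=\emptyset$ (the paper phrases this as $E(v)\subseteq A'$ for $v\in A$ and $E(v)\subseteq B'$ for $v\in B$), and you read off properness from the hypothesis. One terminological nit: the paper calls nonemptiness of both sides \emph{proper}, and uses ``genuine'' to mean that the separator consists only of vertices; since you prove both properties, nothing is missing.
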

\begin{proof}
    Since $E(G)=E(G[A]) \,\dot\cup\, E(G[B]) \,\dot\cup\, S(A,B)$, we get $A' \cup B' = V(L)$ and $A' \cap B' = S(A,B)$.
    Note that for every vertex $v \in A$ we have $E(v) \subseteq A'$.
    Similarly, for every vertex $v \in B$ we have $E(v) \subseteq B'$.
    It follows that there is no $(A' \sm B')$--$(B' \sm A')$ edge in~$L$.
    From $E(G[A]) \neq \emptyset \neq E(G[B])$ we get that $(A',B')$ is proper.
    Hence, $(A',B')$ is a separation of~$L$ that has the same order as~$(A,B)$.

\end{proof}

\begin{lemma}
\label{lem:transitive-separation-to-cut}
    Let~$G$ be a connected graph and let $L=L(G)$ be the line graph of~$G$.
    If $(A',B')$ is a genuine separation of~$L$, then \[(A,B) := (\{v \in V(G) \mid E(v) \subseteq A'\}\:,\: \{v \in V(G) \mid E(v) \subseteq B'\})\] is a mixed-separation of~$G$ with $A \cap B = \{v \in V(G) \mid E(v) \subseteq A' \cap B'\}$.
\end{lemma}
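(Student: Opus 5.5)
We need to show that $(A,B)$ is a mixed-separation of $G$, and to identify $A\cap B$. The plan is to verify the defining properties of a mixed-separation in the terminology of \cite[§2]{TetraDecomp} directly: a pair of vertex sets together with a set of edges, such that removing the separator vertices and the separator edges disconnects the two strict sides. The identity for $A\cap B$ is immediate from the definition, since $E(v)\subseteq A'$ and $E(v)\subseteq B'$ hold together exactly when $E(v)\subseteq A'\cap B'$.

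We begin by sorting vertices. Because $L$ has no $(A'\sm B')$--$(B'\sm A')$ edge, and all edges of $E(v)$ are pairwise adjacent in $L$, no vertex $v$ can have edges in both $A'\sm B'$ and $B'\sm A'$. So every vertex $v$ with $E(v)\not\subseteq A'\cap B'$ has all of its edges in $A'$ or all of them in $B'$. Connectivity of $G$ ensures there are no isolated vertices, so such a $v$ indeed lies in $A\cup B$.

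The remaining vertices satisfy $E(v)\subseteq A'\cap B'$. By definition these lie in both $A$ and $B$, so $A\cup B=V(G)$.

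Next we identify the separator edges. Let $X$ be the set of edges $uv$ with $u\in A\sm B$ and $v\in B\sm A$. Any such edge lies in $A'\cap B'$, because it belongs to both $E(u)\subseteq A'$ and $E(v)\subseteq B'$. The mixed-separator is then $(A\cap B)\cup X$.

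Its order is at most $|A'\cap B'|$. To see this, we map each edge of $X$ to itself and each vertex of $A\cap B$ to an edge of $E(v)\subseteq A'\cap B'$. This map needs to be injective, which is the one point requiring care. An edge of $X$ has both ends outside $A\cap B$, so it cannot be chosen for a vertex of $A\cap B$. For two vertices $v,w\in A\cap B$, we choose their representatives to be distinct, provided the edge $vw$ is not forced; where such a choice is unavailable, we rely on the specific form of mixed-separation in \cite{TetraDecomp} rather than on an order bound. By construction, every edge between $A\sm B$ and $B\sm A$ lies in $X$, so removing $(A\cap B)\cup X$ separates the two strict sides.

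The main obstacle is properness and genuineness: $A\sm B$ and $B\sm A$ should both be nonempty, or at least the mixed-separation should be nondegenerate in the required sense. Pick $e\in A'\sm B'$. Both of its ends have an edge outside $B'$, so neither end lies in $B$, and both ends lie in $A\sm B$. Symmetrically, $B\sm A\neq\emptyset$.

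The degenerate cases, where the separator would swallow a side, are excluded by the genuineness of $(A',B')$. This is the step where I would check most carefully against the exact definitions in \cite{TetraDecomp}.
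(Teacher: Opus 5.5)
Your proof is correct and follows the paper's argument. Since $E(v)$ is a clique in $L$, it lies in $A'$ or in $B'$, which gives $A\cup B=V(G)$. The ends of an edge in $A'\sm B'$ (resp.\ $B'\sm A'$) witness that $A\sm B$ (resp.\ $B\sm A$) is nonempty, and that is all a mixed-separation requires.

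The paragraph bounding the order by $|A'\cap B'|$ is not part of the statement and can be dropped; its injectivity argument is not rigorous as written, but nothing depends on it. The final hedging paragraph can also go, since your $e\in A'\sm B'$ argument already settles properness.
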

\begin{proof}
    Note that for all $v \in V(G)$, the set $E(v) \subseteq V(L)$ induces a clique in~$L$.
    Hence, $E(v) \subseteq A'$ or $E(v) \subseteq B'$ for all $v \in V(G)$.
    From the definition of~$(A,B)$, it follows that $A \cup B = V(G)$. 
    Since $(A',B')$ is proper, it follows that there exist vertices $v_A,v_B \in V(G)$ with $E(v_A) \subseteq A' \nsupseteq E(v_B)$ and $E(v_B) \subseteq B' \nsupseteq E(v_A)$; that is, $v_A \in A \sm B$ and $v_B \in B \sm A$.
    Hence, $(A,B)$ is a mixed-separation.
    Moreover, a vertex~$v$ is contained in $A \cap B$ if and only if $E(v) \subseteq A' \cap B'$.
\end{proof}

\begin{lemma}
\label{lem:transitive-properties-line-graph}
    Let~$G$ be a 3-regular internally-4-connected graph and let~$L=L(G)$ be the line graph of~$G$.
    Then all of the following assertions hold:
    \begin{enumerate}
        \item\label{itm:transitive-properties-line-graph-0} $L$ is 4-regular and every vertex~$v \in V(L)$ is contained in exactly two triangles~$\Delta_1$ and~$\Delta_2$ in~$L$. Moreover, $V(\Delta_1 \cap \Delta_2)=\{v\}$.
        \item\label{itm:transitive-properties-line-graph-1} $L$ is 4-connected.
        \item\label{itm:transitive-properties-line-graph-3} For every triangle~$\Delta \subseteq L$ there is a vertex~$v$ in~$G$ such that~$E(v)=V(\Delta)$.
        \item\label{itm:transitive-properties-line-graph-4} For every edge $e \in E(L)$ there is a unique triangle~$\Delta(e)$ in~$L$ that contains~$e$.
    \end{enumerate}
\end{lemma}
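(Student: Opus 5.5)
I would prove the four assertions in the order \cref{itm:transitive-properties-line-graph-0}, \cref{itm:transitive-properties-line-graph-3}, \cref{itm:transitive-properties-line-graph-4}, \cref{itm:transitive-properties-line-graph-1}. Throughout I would use two facts about $G$ that follow from \cref{lem:transitive-reduction-to-int-4-con}. First, $G$ has girth~$\ge 4$; the exceptional graph $K_3\square K_2$ is not internally-4-connected. Second, $G$ is simple, 3-regular and 3-connected.

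For \cref{itm:transitive-properties-line-graph-0}, let $e=uv\in E(G)$. Its neighbours in $L$ are the two other edges at~$u$ and the two other edges at~$v$. These are four distinct edges, so $L$ is 4-regular. Any triangle of $L$ consists of three pairwise intersecting edges of~$G$, so it is either a star (three edges at one vertex) or the edge set of a triangle of~$G$. Girth~$\ge 4$ excludes the second option. This already gives \cref{itm:transitive-properties-line-graph-3}. Hence the triangles of $L$ containing $e$ are exactly $E(u)$ and $E(v)$, and $E(u)\cap E(v)=\{e\}$. For \cref{itm:transitive-properties-line-graph-4}, an edge $ef$ of $L$ corresponds to edges $e,f$ sharing exactly one end $w$, since $G$ is simple. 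By \cref{itm:transitive-properties-line-graph-3}, any triangle containing $e,f$ is $E(x)$ for some $x$ incident to both, so $x=w$, and uniqueness follows.

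The main step is \cref{itm:transitive-properties-line-graph-1}. Suppose $L$ had a separation $(A',B')$ of order $\le 3$. Note $|V(L)|=|E(G)|\ge 9$, since $G$ is 3-regular, not $K_4$, and has girth~$\ge 4$; so $L$ is not complete. The plan is to apply \cref{lem:transitive-separation-to-cut} to obtain a mixed-separation $(A,B)$ of $G$, and argue as follows.

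\begin{itemize}
\item If some vertex $v$ lies in $A\cap B$, then $E(v)\subseteq A'\cap B'$. So the whole separator of $(A',B')$ is $E(v)$, and $A\cap B=\{v\}$. Then $(A\sm\{v\},B)$ or the corresponding shift gives a small edge-cut in $G$.
\item Otherwise $A\cap B=\emptyset$, and every edge between $A$ and $B$ lies in $A'\cap B'$. So $(A,B)$ is an edge-cut of order $\le 3$.
\end{itemize}

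In a 3-regular 3-connected graph, an edge-cut of order $\le 3$ has a side that is a single vertex or has order exactly~3. By internal 4-connectivity and girth~$\ge 4$, every 3-edge-cut is trivial, i.e.\ one side is a single vertex~$w$. Indeed, otherwise taking one end on each cut edge yields a 3-separation both of whose sides are more than a claw. The one-vertex case is what needs care, since there $A'$ or $B'$ would be contained in $E(w)$.

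I expect the main obstacle to be exactly this bookkeeping between genuine separations of $L$ and edge-cuts of $G$. I must show that properness of $(A',B')$ forces both sides of the edge-cut to contain an edge of~$G$, so that the trivial cut at $w$ cannot arise. Concretely: if $A=\{w\}$, then $A'\sm B'$ consists of edges all of whose ends lie in $A$, and there are none. This contradicts properness.
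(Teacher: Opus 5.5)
Your proposal is correct and is essentially the paper's proof. As there, (i), (iii) and (iv) come from simplicity and girth~$\ge 4$. For (ii), both arguments pull a $(\le 3)$-separation of~$L$ back via \cref{lem:transitive-separation-to-cut}, rule out $A\cap B\neq\emptyset$ by a cut-vertex or small-edge-cut argument, and otherwise use that $(\le 3)$-edge-cuts of~$G$ are atomic to contradict properness of $(A',B')$. You justify atomicity directly rather than citing it. That works once you note that $3|A|=2|E(G[A])|+3$ forces a non-trivial side to have odd size, hence size $\ge 5$ by girth; this is exactly what your ``more than a claw'' step needs.

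Like the paper, you get girth~$\ge 4$ (via \cref{lem:transitive-reduction-to-int-4-con}) only by tacitly excluding $G=K_4$. This exclusion is genuinely needed: $K_4$ is vacuously internally-4-connected, but $L(K_4)$ is the octahedron, where (i), (iii) and (iv) fail.
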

\begin{proof}
    \ref{itm:transitive-properties-line-graph-0} follows from the definition of the line graph and from~$G$ having girth~$\geq 4$.
    
    \ref{itm:transitive-properties-line-graph-1}.
    Suppose for a contradiction that there is a genuine $(\leq 3)$-separation $(A',B')$ of~$L$.
    Let
    \[(A,B) := (\{v \in V(G) \mid E(v) \subseteq A'\}\:,\: \{v \in V(G) \mid E(v) \subseteq B'\})~,\]
    which is a mixed-separation of~$G$ by \cref{lem:transitive-separation-to-cut}.
    
    First, suppose $A \cap B \neq \emptyset$. 
    Then~$A' \cap B'$ induces a~$K_3$ in~$L$.
    By~\ref{itm:transitive-properties-line-graph-0}, every vertex~$v$ of~$L$ is contained in exactly two triangles, which only intersect in~$v$.
    It follows that~$(A,B)$ is a 1-separation of~$G$, a contradiction.
    
    Hence, we may assume that~$(A,B)$ is an edge-cut.
    Let $e=uv \in S(A,B)$, say with $u \in A$ and $v \in B$.
    Then $E(u) \subseteq A'$ and $E(v) \subseteq B'$ by definition of~$(A,B)$.
    Let~$\Delta_u$ and~$\Delta_v$ be the triangles in~$L$ induced by~$E(u)$ and~$E(v)$, respectively.
    From $V(\Delta_u) \subseteq A'$ and $V(\Delta_v) \subseteq B'$ and $V(\Delta_u \cap \Delta_v) = \{e\}$ (by~\ref{itm:transitive-properties-line-graph-0}) it follows that $e \in S(A',B')$.
    Hence, $S(A,B) \subseteq S(A',B')$; in particular~$(A,B)$ has order~$\leq 3$.
    Since~$G$ is internally-4-connected,~$(A,B)$ is an atomic edge-cut with $S(A,B)=S(A',B')$.
    Now $(A',B')$ is not proper, a contradiction.

    \ref{itm:transitive-properties-line-graph-3}. 
    From~$G$ being 3-regular and internally-4-connected, it follows that~$G$ has girth~$\geq 4$. 
    If~$\Delta$ is a triangle in~$L$ such that there is no vertex~$v$ in~$G$ with $V(\Delta) = E(v)$, then $V(\Delta)$ is the set of edges of a triangle in~$G$, contradicting that $G$ has girth~$\geq 4$.

    \ref{itm:transitive-properties-line-graph-4}. 
    Every edge $e_1 e_2\in E(L)$ is contained in at least one triangle: consider the endvertex $v\in G$ shared by $e_1$ and~$e_2$, then $E(v)$ induces a triangle in $L$ that contains the edge~$e_1 e_2$.
    Assume for a contradiction that there is an edge $e_1 e_2 \in E(L)$ that is contained in at least two triangles~$\Delta_1$ and~$\Delta_2$ of~$L$. By~\ref{itm:transitive-properties-line-graph-3}, there are two vertices $v_1, v_2 \in V(G)$ such that $E(v_i)=V(\Delta_i)$ for $i \in \{1,2\}$. Then $e_1,e_2 \in E(v_1) \cap E(v_2)$, contradicting that~$G$ is simple.
\end{proof}

\begin{lemma}
\label{lem:transitive-genuine-seps}
    Let~$G$ be a finite, 3-regular, internally-4-connected, vertex-transitive graph, and let $L=L(G)$ be its line graph.
    Let $(A',B')$ be a mixed-4-separation of~$L$ with $|A' \sm B'| \geq 2$ and $|B' \sm A'| \geq 2$.
    Then $S(A',B') \subseteq V(L)$.
    In particular, all tetra-separations of~$L$ are genuine 4-separations.
\end{lemma}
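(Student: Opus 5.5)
The plan is to argue by contradiction. Suppose $(A',B')$ is a mixed-4-separation of~$L$ with $|A'\sm B'|\ge 2$ and $|B'\sm A'|\ge 2$ whose separator contains an edge $xy$ of~$L$, with $x\in A'\sm B'$ and $y\in B'\sm A'$. Among all such separations, choose one whose separator contains as few edges as possible. I will repeatedly use the local structure from \cref{lem:transitive-properties-line-graph}:
\begin{itemize}
    \item $L$ is 4-regular and 4-connected;
    \item every edge lies in a unique triangle;
    \item every vertex lies in exactly two triangles, which meet only in that vertex;
    \item every triangle of~$L$ is of the form $E(v)$ for some $v\in V(G)$.
\end{itemize}

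\textbf{Step 1: trade the edge for a vertex.} Let $\Delta(xy)=xyz$ be the unique triangle through~$xy$. Whichever side $z$ lies on, the triangle forces at least two edges of $\Delta(xy)$ into the separator, or $z$ itself into it. Now move $x$ (or $y$, chosen according to the position of~$z$) into $A'\cap B'$. This deletes from the separator every edge at~$x$ going to $B'\sm A'$ and adds only the vertex~$x$. Because at least two such edges exist in the relevant configuration, the order does not increase. The result is a mixed separation of order $\le 4$ with strictly fewer edges in its separator, or at worst with the same order and one edge fewer.

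The thing to check is that both strict sides still have at least two vertices. If moving $x$ would make $A'\sm B'$ too small, then $A'\sm B'=\{x,w\}$. In that case I count edges leaving $\{x,w\}$: each of $x$ and $w$ has degree~4, and $xw$ is either an edge or not. This forces order $\ge 5$ unless $x$ and $w$ share a triangle with a vertex in the separator, and I expect that configuration to be excludable directly using the degree count.

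\textbf{Step 2: the edge-free case.} Iterating Step 1, or invoking the minimality of the chosen separation, leaves us with a genuine separation $(A'',B'')$ of order $\le 4$ with both strict sides of size $\ge 2$. Since $L$ is 4-connected, its order is exactly~4. Apply \cref{lem:transitive-separation-to-cut} to obtain the mixed separation $(A,B)$ of~$G$. As in the proof of \cref{lem:transitive-properties-line-graph}~\ref{itm:transitive-properties-line-graph-1}, the vertices of $A\cap B$ correspond to triangles inside $A''\cap B''$, so at most one vertex of~$G$ lies in $A\cap B$. The remaining separator vertices of~$L$ correspond to edges of~$G$ crossing the cut.

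\textbf{Step 3: contradict internal-4-connectivity.} It remains to check that the original mixed separation with an edge in its separator cannot have come about.

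The key quantity is how many edges of~$L$ leave a set $A''\sm B''$ of at least two vertices. Each vertex has two triangles, and edge-disjointness of triangles gives many crossing edges. Concretely, if the separator contains $k\ge 1$ edges and $4-k$ vertices, then the side containing~$x$ induces, via \cref{lem:transitive-cut-to-separation} run backwards, an edge-cut of~$G$ of order at most~3. Internal-4-connectivity together with girth $\ge 4$ forces every 3-edge-cut of~$G$ to isolate a single vertex of~$G$. Such a cut corresponds in~$L$ to a side equal to a triangle, and that side would have exactly 3 vertices with a vertex-only separator, contradicting the presence of the edge $xy$. Vertex-transitivity of~$G$ enters only through the fact that $G$ is 3-regular with girth $\ge 4$ everywhere.

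\textbf{Main obstacle.} I expect the main difficulty to be the bookkeeping in Step~1 when a strict side has exactly two or three vertices. There I would first try an exhaustive small count using 4-regularity and the triangle structure.

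Finally, tetra-separations have both strict sides of size $\ge 2$, so the statement for tetra-separations follows immediately.
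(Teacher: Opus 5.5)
\textbf{There is a genuine gap: the proposal never derives a contradiction.}

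Steps~1 and~2 push you towards a \emph{genuine} 4-separation of~$L$, but that is not a contradiction. $L$ has plenty of genuine 4-separations, e.g.\ images of tetra-cuts of~$G$ under \cref{lem:transitive-cut-to-separation}. Indeed, Step~3 drops their output and returns to ``the original mixed separation''. So everything rests on one sentence in Step~3: that a mixed-4-separation of~$L$ with $k\ge 1$ separator edges yields an edge-cut of~$G$ of order~$\le 3$. This is asserted, not proved.

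It does not follow from \cref{lem:transitive-separation-to-cut}, which only handles genuine separations. A vertex~$w$ of~$G$ whose triangle $E(w)$ contains a separator edge lies on neither side. A separator vertex of~$L$ whose other triangle sits on one side, or contains further separator elements, can contribute several edges to a naive cut count. Without knowing the exact shape of the separator, no bound of~$3$ is available. Your closing sentence about ``a side equal to a triangle with a vertex-only separator'' also does not identify where a contradiction would come from. Your minimal-counterexample choice (fewest separator edges) points the wrong way. The useful structure comes from pushing separator vertices with $\le 1$ neighbour on a side \emph{out} of the separator, not from pulling vertices in.

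\textbf{The missing idea is to normalise the separation so that the matching- and degree-conditions hold.} The paper does this by passing to the left-right-reduction, a tetra-separation $(A'',B'')$ that keeps every separator edge and only enlarges the strict sides. You could partly do this by hand: a vertex incident with two separator edges can be moved into the separator, giving order~$\le 3$ against 4-connectivity of~$L$. The local structure then pins the separator down:
\begin{itemize}
\item the apex~$v$ of the unique triangle through a separator edge~$e_1$ must lie in the separator, by the matching-condition;
\item the degree-condition and 4-regularity split $v$'s second triangle, so its opposite edge~$e_2$ is also a separator edge;
\item the fourth element must then be a vertex~$u$.
\end{itemize}
In~$G$ this is a mixed-3-separation with separator $\{x_1,x_2,f\}$. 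Here $E(x_1),E(x_2)$ are the two triangles at~$v$, so $x_1x_2\in E(G)$, and $f$ is the edge~$u$. Adding the far end of~$f$ gives a 3-separation of~$G$ whose separator is not independent, contradicting internal-4-connectivity.

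Once this configuration is established, your 3-edge-cut idea does work. Let $a_1,a_2$ be the ends of $e_1,e_2$ in $A''\sm B''$; they are edges of~$G$ at $x_1,x_2$. The set of vertices $w$ with $E(w)\se A''$ is cut off by exactly the three edges $a_1,a_2,f$. By \cref{obs:i4c-no-3-edge-cut} this cut is atomic, which forces a common end of $a_1,a_2$ and hence a triangle through $x_1x_2$ in~$G$, contrary to girth~$\ge 4$.
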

\begin{proof}
    First, observe that we can assume $G \neq K_4$.
    By \cref{lem:transitive-properties-line-graph},~$L$ is 4-connected.
    Let~$(A',B')$ be a a mixed-4-separation of~$L$ with $|A' \sm B'| \geq 2$ and $|B' \sm A'| \geq 2$.
    The left-right-reduction $(A'',B'')$ of~$(A',B')$ is a tetra-separation by~\cite[Key Lemma~5.10]{TetraDecomp}.
    Moreover, it satisfies $A'' \sm B'' \supseteq A' \sm B'$ and $B'' \sm A'' \supseteq B' \sm A'$ by~\cite[Observation~5.9]{TetraDecomp}, and every edge in~$S(A',B')$ is in~$S(A'',B'')$. Hence, it suffices to show that $S(A'',B'') \subseteq V(L)$.
    
    Assume for a contradiction that $e_1 \in S(A'',B'')$ is an edge in~$L$. 
    By \cref{lem:transitive-properties-line-graph}~\ref{itm:transitive-properties-line-graph-4}, there is a unique triangle~$\Delta_1$ in~$L$ that contains~$e_1$.
    Let~$v$ be the third vertex in $\Delta_1$ besides the endvertices of~$e_1$.
    Since~$\Delta_1$ is a triangle with the edge $e_1$ contained in $S(A'',B'')$, we get that~$v$ is contained in the separator $S(A'',B'')$ as well.
    By \cref{lem:transitive-properties-line-graph}~\ref{itm:transitive-properties-line-graph-0}, there is a triangle $\Delta_2 \neq \Delta_1$ in~$L$ such that $\Delta_1$ and $\Delta_2$ meet exactly in the vertex~$v$. 
    Let~$e_2$ denote the edge in $\Delta_2$ whose endvertices are distinct from~$v$.
    Since $L$ is 4-regular, the degree-condition for $v\in S(A'',B'')$ yields $e_2 \in S(A'',B'')$.
    So we know three elements of $S(A'',B'')$: they are $e_1,v$ and~$e_2$.
    The fourth element~$u$ of~$S(A'',B'')$ must be a vertex, as otherwise we could argue as for~$e$ to find a third triangle contributing both an edge and a vertex to~$S(A'',B'')$, contradicting that~$\Delta_1$ and~$\Delta_2$ already use all four edges incident to~$v$.
    By \cref{lem:transitive-properties-line-graph}~\ref{itm:transitive-properties-line-graph-0}, there are two triangles $\Delta,\Delta'$ in~$L$ that meet exactly in the vertex~$u$.
    By 4-regularity of $L$ and the degree-condition for $u\in S(A',B')$, we have $\Delta-u\se G[A'\sm B']$ and $\Delta'-u\se G[B'\sm A']$, say.

    Now consider the mixed-separation $(A,B)$ of $G$ that is induced by~$(A'',B'')$ as in \cref{lem:transitive-separation-to-cut}.
    The separator of~$(A,B)$ consists of two vertices $x_1,x_2$ that correspond to the triangles $\Delta_1,\Delta_2$, respectively, and of one edge~$f$ that corresponds to the vertex~$u$ (by \cref{lem:transitive-properties-line-graph}~\ref{itm:transitive-properties-line-graph-3}).
    In particular, $(A,B)$ is a mixed-3-separation of~$G$.
    Also note that $x_1 x_2 \in E(G)$ as the triangles $\Delta_1,\Delta_2$ share the vertex~$u$.

    Since $G \neq K_4$, there is a proper side of~$(A,B)$, say~$B \sm A$, that contains at least two vertices. Let~$x_3$ be the endvertex of~$f$ in $B \sm A$. Note that $(A \cup f, B)$ is a genuine 3-separation with separator~$\{x_1,x_2,x_3\}$.
    Since~$G$ is internally-4-connected, it follows that $x_1 x_2 \notin E(G)$, a contradiction.
\end{proof}

Let~$G$ be a graph.
A \defn{tetra-cut} of~$G$ is a 4-edge-cut of~$G$ that satisfies the matching-condition.
A tetra-cut of~$G$ is \defn{totally-nested} if it is nested with every tetra-cut of~$G$.

\begin{observation}
\label{obs:edge-cut-genuine}
    If~$G$ is a 2-connected graph and $(A,B)$ is a non-atomic edge-cut of~$G$ then $E(G[A]) \neq \emptyset \neq E(G[B])$.\qed
\end{observation}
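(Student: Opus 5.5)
Since $(A,B)$ is an edge-cut, every edge of $G$ lies in exactly one of $E(G[A])$, $E(G[B])$ and $S(A,B)$. We argue by contradiction: assume $(A,B)$ is non-atomic but, by symmetry, $E(G[A])=\emptyset$. Non-atomic means $|A|\ge 2$ and $|B|\ge 2$.

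Take two distinct vertices $u,v\in A$. Because $G[A]$ has no edges, every edge of $G$ at $u$ or at $v$ lies in $S(A,B)$ and so has its other end in $B$. Since $G$ is 2-connected and has at least $4$ vertices, every vertex has degree at least $2$. Hence $u$ has at least two neighbours, all in $B$, and likewise for $v$.

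It remains to extract a contradiction from this. If $S(A,B)$ satisfies the matching condition, so that no two cut-edges share an endvertex, then $u$ cannot have two cut-edges, and we are done at once. Without the matching condition we use 2-connectivity directly. First, $E(G[B])=\emptyset$ is impossible as well: then $G$ would be bipartite with parts $A$ and $B$, and every edge would be a cut-edge. That case is not contradictory in itself; $K_{2,2}$ is such an example. So the statement must be read in its intended context: the cuts in question are the tetra-cuts of 2-connected graphs (indeed 3-regular internally-4-connected ones), which carry the matching condition. There, the argument above closes, because each vertex of $A$ is incident to at least $2$ cut-edges, which violates the matching condition.

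The main obstacle is pinning down the hypotheses behind ``non-atomic''. If non-atomic also requires the matching condition, or equivalently $|A|,|B|\ge 2$ together with the matching condition, the proof is the degree count above. I would state that assumption explicitly, and note the bipartite example $K_{2,2}$ to show that it is needed.
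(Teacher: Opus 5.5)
Your diagnosis is correct. The paper offers nothing to compare it with, because the observation is stated without proof. With \emph{atomic} meaning that one side is a single vertex, as you read it, the statement is false as written.

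\begin{itemize}
\item Your $K_{2,2}$ example already shows this.
\item In fact every 2-connected non-complete graph on at least four vertices is a counterexample: take two non-adjacent vertices as one side.
\item The failure even occurs in the setting where the paper invokes the observation, \cref{tetracutBijTetrasep}. The cube is 3-regular, internally-4-connected and vertex-transitive, and its bipartition is a non-atomic edge-cut with both sides independent. So the map $(A,B)\mapsto(A',B')$ there yields $A'=B'=E(G)$, which is not a genuine separation. That map only makes sense once restricted to tetra-cuts, which is all the lemma uses.
\end{itemize}

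For tetra-cuts, your degree count is the right proof. A 2-connected graph has minimum degree at least~2. So if $E(G[A])=\emptyset$, every $u\in A$ is incident with two edges of $S(A,B)$, violating the matching-condition. In the paper's 3-regular setting there is an even cruder check that needs no matching-condition: $E(G[A])=\emptyset$ forces $|S(A,B)|=3|A|$, which is never~4.

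Two things to tidy up.
\begin{itemize}
\item Once the matching-condition is assumed, the hypothesis ``non-atomic'' is redundant, since your count also rules out $|A|=1$. The clean corrected statement is: in a 2-connected graph, every edge-cut $(A,B)$ satisfying the matching-condition has $E(G[A])\neq\emptyset\neq E(G[B])$.
\item Delete the sentence claiming that $E(G[B])=\emptyset$ is ``impossible as well''. Your very next sentence rightly contradicts it.
\end{itemize}
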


\begin{lemma}\label{tetracutBijTetrasep}
    Let~$G$ be a finite, 3-regular, internally-4-connected, vertex-transitive graph and let $L=L(G)$ be its line graph.
    Let the map $(A,B)\mapsto (A',B')$ send non-atomic edge-cuts $(A,B)$ of~$G$ to genuine separations $(A',B')$ of~$L$ as in the statement of \cref{lem:transitive-cut-to-separation}.
    Then this map restricts to a bijection from the set of tetra-cuts of $G$ to the set of tetra-separations of~$L$.
\end{lemma}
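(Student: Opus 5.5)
We check that the map is well-defined on tetra-cuts, that it is injective, and that every tetra-separation of~$L$ arises as the image of a tetra-cut; the inverse will be given by \cref{lem:transitive-separation-to-cut}.

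\emph{Well-definedness.} Let $(A,B)$ be a tetra-cut of~$G$. Since $G$ is 3-regular and internally-4-connected, every 3-edge-cut is atomic. The four edges of $S(A,B)$ form a matching, so $|A|,|B|\ge 2$, and hence $(A,B)$ is non-atomic. By \cref{obs:edge-cut-genuine} and \cref{lem:transitive-cut-to-separation}, $(A',B')$ is a genuine 4-separation of~$L$ with separator $S(A,B)$. To see that $(A',B')$ is a tetra-separation, we verify the defining conditions from \cite{TetraDecomp}.
\begin{itemize}
\item \emph{Degree condition.} A separator vertex $e=uv$ with $u\in A$ has exactly two neighbours $E(u)\sm\{e\}$ in $A'\sm B'$ and two in $B'\sm A'$. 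These neighbours really lie off the separator because $S(A,B)$ is a matching.
\item \emph{Proper sides.} Each side has size $\ge 2$, since $|E(G[A])|\ge 2$ when $|A|\ge 2$ in a 3-regular graph with a matching 4-edge-cut.
\item \emph{Order.} Minimality of order~$4$ holds because $L$ is 4-connected by \cref{lem:transitive-properties-line-graph}.
\end{itemize}
It may instead be cleaner to invoke the characterisation (\cite[Key Lemma~5.10]{TetraDecomp}) that a 4-separation is a tetra-separation iff it is left-right-reduced with the degree condition.

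\emph{Injectivity.} $A$ is recovered from $A'$ as $\{v : E(v)\se A'\}$: for $v\in A$ we have $E(v)\se A'$, while for $v\in B$ at least one edge of $E(v)$ lies in $E(G[B])$. That last claim uses that $v$ has at most one edge in the matching $S(A,B)$ and degree~3.

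\emph{Surjectivity.} This is the main step. Let $(A',B')$ be a tetra-separation of~$L$. By \cref{lem:transitive-genuine-seps} it is a genuine 4-separation, provided both sides have $\ge 2$ interior vertices; this follows from the degree condition, since each separator vertex has two neighbours in each interior. Define $(A,B)$ via \cref{lem:transitive-separation-to-cut}.

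First we show $A\cap B=\emptyset$. If $v\in A\cap B$, then $E(v)\se S(A',B')$, so three separator vertices form a triangle $\Delta_v$. Each of them has two further neighbours, forming the other triangle through it. The degree condition forces those two neighbours to be split one into each interior. The fourth separator vertex~$w$ then leads, on removing $\Delta_v$, to a small separation contradicting internal 4-connectivity of~$G$, or the 4-connectivity of~$L$ (e.g.\ the $\le 3$ vertices $E(v)$-neighbours plus $w$ giving a mixed-3-separation). I would argue this by transferring back to a mixed-3-separation of $G$ containing the vertex $v$, similarly to the end of the proof of \cref{lem:transitive-genuine-seps}.

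So $(A,B)$ is an edge-cut. Every separator vertex $e=uv$ of $L$ has $E(u)$ and $E(v)$ as its two triangles, and the degree condition puts one of them (minus~$e$) on each side. Hence $e\in S(A,B)$, and conversely every edge of $S(A,B)$ lies in $A'\cap B'$. Thus $S(A,B)=S(A',B')$ has size~4 and is a matching, since the degree condition prevents two separator vertices from sharing a triangle. Therefore $(A,B)$ is a tetra-cut. Its image is $(A',B')$ because $E(G[A])=A'\sm B'$: every interior vertex $f$ of $A'$ has both its triangles inside~$A'$.
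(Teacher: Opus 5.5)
\paragraph{Verdict.} Your overall route matches the paper's, but the step $A\cap B=\emptyset$ in your surjectivity argument is argued incorrectly; it has a one-line fix. The paper likewise checks the degree condition for the image of a tetra-cut (the matching condition is vacuous because $(A',B')$ is genuine; ``minimality of order'' is not part of the definition and is not needed). It proves injectivity by recovering $A$ from $A'$, and proves surjectivity by pulling a tetra-separation back via \cref{lem:transitive-separation-to-cut}.

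\paragraph{The faulty step.} You take $v\in A\cap B$ and a separator vertex $e=uv\in E(v)$. You then claim the degree condition forces the two remaining neighbours of $e$ to be split one into each interior. That is false for two reasons:
\begin{enumerate}
\item Those two neighbours form $E(u)\sm\{e\}$, so they are adjacent in $L$. Since $(A',B')$ is genuine by \cref{lem:transitive-genuine-seps}, they cannot lie in different interiors.
\item The degree condition requires two neighbours of $e$ in each interior, not one.
\end{enumerate}
The follow-up appeal to ``the fourth separator vertex $w$'' and a mixed-3-separation of $G$ is not an argument, and nothing of the kind is needed.

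\paragraph{The fix.} Use the observation you already invoke later for the matching property, which is exactly how the paper proceeds. Since $L$ is $4$-regular, the degree condition forces all four neighbours of a separator vertex $e=xy$ into the interiors, two on each side. Because $(A',B')$ is genuine, $E(x)\sm\{e\}$ lies entirely in one interior and $E(y)\sm\{e\}$ entirely in the other. In particular, no triangle of $L$ contains two separator vertices.

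If $v\in A\cap B$, then $E(v)\se A'\cap B'$ would be a triangle with three separator vertices, which is impossible. The same observation gives at once that every separator vertex of $L$ is an $A$--$B$ edge of $G$, and that $S(A,B)=A'\cap B'$ is a matching. With this replacement, the rest of your surjectivity argument goes through.
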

\begin{proof}
    First note that $E(G[A]) \neq \emptyset \neq E(G[B])$ for every nonatomic edge-cut~$(A,B)$ of~$G$ by \cref{obs:edge-cut-genuine}.
    Hence, the prerequisites of \cref{lem:transitive-cut-to-separation} are fulfilled.
    Since $G$ has no isolated vertices, the map $(A,B)\mapsto (A',B')$ is injective.

    We claim that every tetra-cut $(A,B)$ is mapped to a tetra-separation~$(A',B')$.
    We get from \cref{lem:transitive-cut-to-separation} that $(A',B')$ is a 4-separation of~$L$.
    So it remains to check the degree-condition for the vertices in the separator of $(A',B')$.
    For this, let $v$ be an arbitrary vertex in $S(A',B')$.
    By \cref{lem:transitive-properties-line-graph}~\ref{itm:transitive-properties-line-graph-0}, the vertex $v$ is contained in exactly two triangles~$\Delta_1,\Delta_2$, and the two triangles meet precisely in the vertex~$v$.
    Each triangle~$\Delta_i\se L$ corresponds to an endvertex $x_i$ of~$v$ viewed as an edge of~$G$.
    Since $(A,B)$ is a tetra-cut, we have $x_1\in A\sm B$ and $x_2\in B\sm A$, say.
    Each $\Delta_i$ determines a 3-star in~$G$ with centre~$x_i$, and the edges in $S(A,B)$ form a matching from which each 3-star already contains $v$ (viewed as an edge of~$G$).
    Hence $\Delta_1\se L[A']$ and $v$ is the only vertex of $\Delta_1$ contained in~$S(A',B')$.
    So $v$ has $\ge 2$ neighbours in $L[A'\sm B']$.
    Similarly, $v$ has $\ge 2$ neighbours in $L[B'\sm A']$.
    Therefore, $(A',B')$ is a tetra-separation of~$L$.

    To conclude the proof, it remains to show that every tetra-separation of~$L$ equals $(A',B')$ for some tetra-cut~$(A,B)$.
    For this, let $(C,D)$ be an arbitrary tetra-separation of~$L$.
    By \cref{lem:transitive-genuine-seps}, $(C,D)$ is a genuine 4-separation.
    Let~$(A,B)$ be the mixed-separation in~$G$ as defined in \cref{lem:transitive-separation-to-cut} applied to~$(C,D)$.
    Then we claim that~$(A,B)$ is a tetra-cut and~$(A',B')=(C,D)$.
    By the degree-condition and \cref{lem:transitive-properties-line-graph}~\ref{itm:transitive-properties-line-graph-0}, for every $v \in S(C,D)$ there are two triangles~$\Delta_1(v), \Delta_2(v)$ with $\Delta_1(v) \sm \{v\} \subseteq C \sm D$ and $\Delta_2(v) \sm \{v\} \subseteq D \sm C$.
    That means, for two distinct $v,w \in S(C,D)$ we have $\Delta_i(v) \neq \Delta_i(w)$ for $i \in\{1,2\}$.
    With \cref{lem:transitive-properties-line-graph} it follows that~$(A,B)$ is a tetra-cut.
    Moreover, if $v \in C \sm D$ then $v \in E(G[A])$, and it follows that $v \in A' \sm B'$.
    Analogously, $v \in D \sm C$ implies $v \in B' \sm A'$.
    On the other hand, if $v \in C \cap D$ then $v \in S(A,B)$, and it follows that $v \in A' \cap B'$.
    Hence, $(A',B')=(C,D)$.
\end{proof}

\begin{lemma}
\label{lem:line-graph-preserves-crossing}
    Let~$G$ be a finite, 3-regular, internally-4-connected, vertex-transitive graph and let $L=L(G)$ be its line graph.
    Let $(A,B)\mapsto (A',B')$ be the bijection from the set of tetra-cuts~$(A,B)$ of~$G$ to the set of tetra-separations~$(A',B')$ of~$L$.
    Then $(A,B) \leq (C,D)$ if and only if $(A',B') \leq (C',D')$.
\end{lemma}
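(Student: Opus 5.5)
We need to show the bijection of \cref{tetracutBijTetrasep} is an order-isomorphism. We use the standard partial order on (mixed-)separations. For edge-cuts $(A,B)\le(C,D)$ means $A\se C$ and $B\supseteq D$. For separations of $L$, $(A',B')\le(C',D')$ means $A'\se C'$ and $B'\supseteq D'$. The plan is to prove both directions directly from the explicit formulas $A'=E(G[A])\cup S(A,B)$ and $B'=E(G[B])\cup S(A,B)$, together with the inverse description of \cref{lem:transitive-separation-to-cut}: $A=\{v: E(v)\se A'\}$ and $B=\{v : E(v)\se B'\}$. That this is indeed the inverse was established in the proof of \cref{tetracutBijTetrasep}.

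For ($\Rightarrow$), assume $A\se C$ and $B\supseteq D$. Take an edge $e=xy\in A'$, i.e.\ $e$ has at least one end in $A$, say $x\in A\se C$. Then $e$ has an end in $C$, so $e\in E(G[C])\cup S(C,D)=C'$. This uses that for an edge-cut, $A'$ is exactly the set of edges with at least one end in $A$, since $A\sqcup B=V(G)$. Thus $A'\se C'$. Symmetrically, any edge with an end in $D\se B$ has an end in $B$, so $D'\se B'$.

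For ($\Leftarrow$), assume $A'\se C'$ and $D'\se B'$. We use the inverse description. If $v\in A$ then $E(v)\se A'\se C'$, so $v\in\{w: E(w)\se C'\}=C$; this identification holds because $(C',D')$ is the image of the tetra-cut $(C,D)$. Here we need $E(v)\neq\emptyset$, which holds by 3-regularity. This gives $A\se C$. Similarly $v\in D$ gives $E(v)\se D'\se B'$, hence $v\in B$. So $D\se B$, i.e.\ $(A,B)\le(C,D)$.

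The main obstacle is bookkeeping. We must confirm that the inverse formula of \cref{lem:transitive-separation-to-cut} returns exactly $(A,B)$ from $(A',B')$, so no vertex lands in $A\cap B$. This follows because no vertex $v$ of $G$ has $E(v)\se S(A,B)$: the edges of a tetra-cut form a matching, and $v$ has degree $3$. We should also check that the orientation conventions of $\le$ for mixed-separations agree with the set-theoretic description used above.
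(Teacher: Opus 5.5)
Your proof is correct and follows essentially the same route as the paper: the forward direction shows $A'\subseteq C'$ from $A\subseteq C$, because every edge with an end in $A$ lies in $E(G[C])\cup S(C,D)$; the backward direction uses $v\in A \Rightarrow E(v)\subseteq A'\subseteq C' \Rightarrow v\in C$. Your extra check that $\{w : E(w)\subseteq C'\}=C$ makes explicit a step the paper leaves implicit. It holds because a vertex of $D$ has degree~$3$ but at most one incident edge in the matching $S(C,D)$.
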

\begin{proof}
    If $(A,B) \leq (C,D)$ then $A \subseteq C$.
    Hence, $E(G[A]) \subseteq E(G[C])$ and $S(A,B) \subseteq E(G[C]) \cup S(C,D)$.
    It follows that $A' \subseteq C'$. Analogously, $B' \supseteq D'$.

    On the other hand, assume $(A',B') \leq (C',D')$.
    Let~$v$ be a vertex in~$A$.
    Recall that~$E(v)$ denotes the edges incident with~$v$ in~$G$.
    Then $E(v) \subseteq A' \subseteq C'$.
    From $E(v) \subseteq C'$ it follows that $v \in C$.
    Hence $A \subseteq C$ and, analogously, $B \supseteq D$.
\end{proof}

\begin{lemma}\label{totallynestedTetracut4cycle}
    Let~$G$ be a finite 3-regular internally-4-connected vertex-transitive graph and let~$(A_1,A_2)$ be a totally-nested tetra-cut in~$G$.
    Then $G[A_1]=C_4$ or~$G[A_2]=C_4$.
\end{lemma}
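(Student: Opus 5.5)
The plan is to transfer the problem to the line graph and reuse \cref{lem:transitive-totally-nested}. Let $L=L(G)$ and let $(A_1',A_2')$ be the image of $(A_1,A_2)$ under the bijection of \cref{tetracutBijTetrasep}. By \cref{lem:line-graph-preserves-crossing}, this bijection preserves $\le$, and hence also preserves nestedness. Consequently, $(A_1',A_2')$ is a totally-nested tetra-separation of~$L$.

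However, $L$ need not be vertex-transitive; $G$ being vertex-transitive does not make $L$ vertex-transitive. So instead of citing \cref{lem:transitive-totally-nested} directly, I will rerun its minimality argument inside~$G$.

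Let $M$ be the union of the $\Aut(G)$-orbits of $(A_1,A_2)$ and $(A_2,A_1)$. Every element of $M$ is a totally-nested tetra-cut, and $\Aut(G)$ acts on tetra-cuts preserving $\le$. Choose $(C,D)\in M$ that is $\le$-minimal.

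\emph{Step 1: every vertex of $C$ is an endvertex of an edge in $S(C,D)$.} Suppose some $v\in C$ is not. Pick an endvertex $u\in C$ of an edge of $S(C,D)$, and take $\varphi\in\Aut(G)$ with $\varphi(u)=v$. Then $\varphi(C,D)$ is nested with $(C,D)$, and the cut edges at $v$ lie inside $G[C]$. The four ways of being nested must be checked to see that this forces $\varphi(C,D)<(C,D)$ or $\varphi(D,C)<(C,D)$, contradicting minimality. This is the same case analysis as in \cref{lem:transitive-totally-nested}. Here one also uses that $|C|,|D|\ge 2$ and that the matching condition prevents $S(C,D)$ and its image from coinciding in a degenerate way.

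\emph{Step 2: $G[C]=C_4$.} The edges of $S(C,D)$ form a matching, so Step~1 gives $|C|=4$. By 3-regularity, $G[C]$ has $(3\cdot 4-4)/2=4$ edges and every vertex has degree~$2$ in $G[C]$. Hence $G[C]$ is $C_4$ or a disjoint union of two $K_2$'s. If $G[C]$ were the union of two $K_2$'s, then each $K_2$ would be separated from the rest of the graph by $2$ edges, giving a 2-edge-cut. In a 3-regular graph a 2-edge-cut yields a 2-separation, contradicting 3-connectivity. (Alternatively, $G[C]$ must be connected by the definition of edge-cut/tetra-cut sides.) So $G[C]=C_4$. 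Since $(A_1,A_2)$ lies in the orbit of $(C,D)$ or $(D,C)$, it follows that one of $G[A_1]$, $G[A_2]$ is a $C_4$.

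The main obstacle is Step~1. One must verify that, for nested tetra-cuts in a graph where cut sides carry edges but no vertices, the image $\varphi(C,D)$ really is strictly smaller than $(C,D)$ or than $(D,C)$. In particular, the cases $\varphi(C,D)\ge(C,D)$ or $\varphi(D,C)\ge (C,D)$ have to be excluded. I would handle this exactly as in \cref{lem:transitive-totally-nested}. Suppose $\varphi(C,D)\ge(C,D)$ or $\varphi(D,C)\ge(C,D)$. Applying $\varphi^{-1}$ then gives $(C,D)\ge\varphi^{-1}(\cdot)$, with the inequality strict because $v\ne u$ have different incidence with the cut. This contradicts minimality of $(C,D)$ again.
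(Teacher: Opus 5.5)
Your proposal is correct and is essentially the paper's proof: the same $\le$-minimality argument over the $\Aut(G)$-orbits of $(A_1,A_2)$ and $(A_2,A_1)$, run directly in $G$ (so the line-graph detour can be dropped), followed by the count showing $G[C]=C_4$. Two harmless slips: a 2-regular graph on four vertices is already $C_4$ (two disjoint $K_2$'s are 1-regular, so that case never arises), and your $\varphi^{-1}$ trick only disposes of $\varphi(C,D)\ge(C,D)$, whereas $\varphi(D,C)\ge(C,D)$ is ruled out directly because $v=\varphi(u)\in\varphi(C)\cap C$ gives $\varphi(C)\not\subseteq D$.
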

\begin{proof}    
    Let~$M$ be the union of the $\Aut(G)$-orbits of $(A_1,A_2)$ and $(A_2,A_1)$. Let $(C,D)$ be $\leq$-minimal in~$M$; that is, $(C',D') \not< (C,D)$ for every $(C',D') \in M$.

    We claim that every vertex in~$C$ is incident to an edge in~$S(C,D)$.
    Assume for a contradiction that $v\in C$ is not.
    Let $u\in C\sm D$ be an endvertex of one of the four edges in $S(C,D)$ and let $\phi \in \Aut(G)$ such that $\varphi(u)=v$.
    Then $(\varphi(C),\varphi(D))<(C,D)$ or $(\varphi(D),\varphi(C))<(C,D)$, contradicting the $\leq$-minimality of~$(C,D)$.
    
    Finally, since $G$ is internally-4-connected, every vertex has degree $\ge 3$, so $G[C]$ is a~$C_4$.
    The tetra-cut $(A_1,A_2)$ shares the above properties of~$(C,D)$ since $(C,D)\in M$.
\end{proof}

\begin{lemma}
\label{obs:i4c-no-3-edge-cut}
    Let~$G$ be an internally-4-connected graph.
    Then every $(\leq 3)$-edge-cut of~$G$ is an atomic 3-edge-cut.
\end{lemma}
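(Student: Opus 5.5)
We need to show that every $(\le 3)$-edge-cut $(A,B)$ of an internally-4-connected graph $G$ is atomic of order exactly~3. Here an edge-cut $(A,B)$ means $A\cap B=\emptyset$, $A\cup B=V(G)$, both sides nonempty, and its separator $S(A,B)$ is the set of $A$--$B$ edges; "atomic" means one side is a single vertex. The plan is to convert the edge-cut into a vertex-separation of order at most~3 and then use internal-4-connectivity.

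\emph{Reduction to a vertex-separation.} Let $(A,B)$ be an edge-cut with $k:=|S(A,B)|\le 3$. Since $G$ is 3-connected, $G$ has at least four vertices and minimum degree at least~3. Suppose first that $|A|\ge 2$ and $|B|\ge 2$. For each edge in $S(A,B)$ choose one endvertex, and let $X$ be the set of chosen vertices, so $|X|\le k\le 3$. We make the choices so that $X$ is a genuine separator leaving a vertex on each side.

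\emph{Choosing the endvertices.} If every vertex of $A$ is incident with a cut-edge, then $|A|\le k\le 3$. In that case some vertex of $A$ has at least $\deg-1\ge 2$ neighbours inside $A$, so $|A|\ge 3$. Hence $|A|=3=k$, $G[A]$ is a triangle, and each vertex of $A$ sends exactly one edge to~$B$. The same holds for~$B$ if every vertex of $B$ meets a cut-edge.

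If both sides are such triangles, then $|V(G)|=6$ and $G$ is $K_3\square K_2$. This graph has the 3-separation $(A\cup\{b\},B\cup\{a\})$ for a cut-edge $ab$, with separator $\{a,b,\dots\}$. A direct check shows that neither side of this separation induces a claw, so $G$ is not internally-4-connected, a contradiction.

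Otherwise, one side, say $A$, has a vertex $a_0$ not incident with any cut-edge. Take $X$ to be the endvertices of the cut-edges lying in $B$. Then $(A\cup X,B)$ is a separation of order $|X|\le 3$, and $a_0\in A\setminus X$.

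We also need a vertex of $B\setminus X$. If $B=X$, then $|B|\le 3$, and by the degree count above $B$ is a triangle whose vertices each send one edge to~$A$. In that case we swap roles and take the endvertices in~$A$ instead, so that $B\setminus X$ is nonempty; this works because $|B|\ge 2$ gives a vertex of $B$ off the chosen set.

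In either case we obtain a genuine separation of order at most~3. By 3-connectivity its order is exactly~3, so $k=3$ and $|X|=3$.

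\emph{Contradiction with internal-4-connectivity.} By internal-4-connectivity, one side of this 3-separation induces a claw: a single vertex $w$ outside $X$ adjacent to all three vertices of~$X$. Such a side has exactly one vertex beyond $X$. Suppose this side is $A\cup X$, so that $A=\{w\}$. This contradicts $|A|\ge 2$.

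The main obstacle is the case where the claw side is the $B$-side. Then $B=X\cup\{w\}$ with $X$ independent, so each vertex of $X$ has degree at least~3, with one neighbour $w$ and one neighbour in~$A$. That forces each vertex of $X$ to have a second edge leaving $B$, which gives more than three cut-edges, a contradiction. If the claw side is the $A$-side in the swapped choice, the argument is symmetric.

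Therefore $|A|=1$ or $|B|=1$. A singleton side $\{v\}$ gives $k=\deg(v)\ge 3$, so $k=3$ and the cut is an atomic 3-edge-cut.
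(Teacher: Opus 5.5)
Your proof is correct, and it takes a different route from the paper's. The paper dismisses $(\le 2)$-edge-cuts via 3-connectivity. It then splits a 3-edge-cut with edges $e_i=a_ib_i$ according to whether the $e_i$ form a matching:
\begin{itemize}
\item If they do, the mixed separators $\{a_1,a_2,b_3\}$ and $\{b_1,a_2,a_3\}$ each cut off a single vertex of $A$ from at least two vertices of $B$. So internal-4-connectivity forces the $A$-sides to be claws, which yields $a_1a_2\notin E(G)$ and $a_1a_2\in E(G)$ respectively.
\item If they do not, $\{a_1,b_3\}$ is a 2-separator unless the cut is atomic.
\end{itemize}
You instead put the whole separator on one side of the cut, let 3-connectivity force $|X|=3$, and finish with a degree count on the claw side. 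This handles cuts of order $\le 2$ and non-matching cuts at once. It also covers degenerate configurations such as all three cut-edges meeting at one vertex of a non-singleton side, which the paper's ``without loss of generality $a_1=a_2\neq a_3$'' passes over. The price is that one-sided separators fail exactly when both sides consist only of cut-edge endpoints, which is why you need a separate $K_3\square K_2$ case.

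In that case your witness is wrong. $(A\cup\{b\},B\cup\{a\})$ is not a separation, because the other two cut-edges join $A\setminus\{a\}$ to $B\setminus\{b\}$. Use a mixed separator such as $\{a_1,a_2,b_3\}$ instead. Its sides are $A\cup\{b_3\}$, a triangle plus a pendant edge, and $B\cup\{a_1,a_2\}$, which has five vertices, so neither is a claw. This is exactly the paper's separator.

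A smaller point on presentation: the vertex $a_0$ is what makes your swapped choice $X\subseteq A$ work, since it lies in $A\setminus X$. It plays no role when $X\subseteq B$, so invoke it at the swap rather than earlier.
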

\begin{proof}
    Since~$G$ is 3-connected, it does not contain a $(\leq 2)$-edge-cut.
    Assume that~$G$ has a 3-edge-cut $(A,B)$ with separator $S(A,B)=\{e_1, e_2, e_3\}$. Let~$a_i$ be the endvertex of~$e_i$ in~$A$, and let~$b_i$ be the endvertex of~$e_i$ in~$B$. 
    
    First, assume that~$(A,B)$ satisfies the matching-condition. 
    Since $\{a_1,a_2,b_3\}$ is a 3-separator separating~$a_3$ from~$\{b_1,b_2\}$, we must have $a_1 a_2 \notin E(G)$ by~$G$ being internally-4-connected.
    Since $\{b_1,a_2,a_3\}$ is a 3-separator separating~$a_1$ from~$\{b_2,b_3\}$, we must have $a_1 a_2 \in E(G)$ by~$G$ being internally-4-connected.
    This is a contradiction.

    Second, assume that~$(A,B)$ does not satisfy the matching-condition.
    If~$(A,B)$ is not atomic, we have without loss of generality $a_1 = a_2 \neq a_3$ and $b_1 \neq b_3$.
    Then, however, $\{a_1,b_3\}$ is a 2-separator separating~$a_3$ from~$b_1$.
    Hence,~$(A,B)$ is atomic.
\end{proof}

\begin{lemma}\label{tetracutCrossing}
    Let~$G$ be an internally-4-connected graph.
    Let~$(A,B)$ and~$(C,D)$ be two crossing tetra-cuts of~$G$.
    Then they cross with all links of size~2 and empty centre.
\end{lemma}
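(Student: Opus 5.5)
Two tetra-cuts $(A,B)$ and $(C,D)$ cross, so all four corners $A\cap C$, $A\cap D$, $B\cap C$, $B\cap D$ are nonempty. For edge-cuts the separators consist only of edges, so there is no centre vertex: every vertex lies in exactly one corner. I would therefore treat the crossing diagram purely in terms of edges. Each edge of $S(A,B)\cup S(C,D)$ either runs between two corners adjacent in the diagram, and so belongs to exactly one of the two separators and lies in a link, or it runs diagonally between opposite corners and belongs to both separators, which puts it in the centre. Write $c$ for the number of diagonal edges and, for each corner $K$, let $d(K)$ be the number of edges of $S(A,B)\cup S(C,D)$ leaving $K$. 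Each corner $K$ is the side of an edge-cut $(K,V(G)\sm K)$ whose cut edges are exactly these $d(K)$ edges.

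The key step is the lower bound on $d(K)$. By \cref{obs:i4c-no-3-edge-cut}, every $(\le 3)$-edge-cut of $G$ is an atomic 3-edge-cut. So for every corner $K$ either $d(K)\ge 4$, or $d(K)=3$ and $K$ or its complement is a single vertex (given enough vertices, $K$ itself). Summing over the four corners counts every link edge twice and every diagonal edge twice, which gives
\[\sum_K d(K)=2(|S(A,B)|+|S(C,D)|)=16.\]
If every corner had $d(K)\ge 4$, equality would hold throughout, every corner cut would have order exactly 4, and we would then exploit the links. Denote the links by $L_{AC}$ and the like, where for example $L_{AC}$ is the set of $C$-side edges of $S(A,B)$ together with the $A$-side edges of $S(C,D)$ leaving $A\cap C$. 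The standard count gives $d(A\cap C)+d(B\cap D)=|S(A,B)|+|S(C,D)|$ plus twice the diagonal contributions of the other two corners, and similarly for the other pair. Combining these identities forces the numbers of link edges to balance, namely $|L_{AC}|+|L_{AD}|=4-c$ and so on, and with all corner orders equal to 4 this yields $c=0$ and all four links of size 2.

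The main obstacle is the case where some corner has $d(K)=3$, that is, a single vertex $v$ with its three edges. Here I would use the matching-condition of the tetra-cuts: no two edges of $S(A,B)$ share an endvertex, so $v$ sends at most one edge into $S(A,B)$ and at most one into $S(C,D)$. A single diagonal edge could lie in both, but then $v$ has at most two cut edges, while all three of its edges must be cut because $K=\{v\}$. That is a contradiction, so no corner is a singleton and every $d(K)\ge 4$. Together with the previous paragraph this shows that the two tetra-cuts cross with all links of size 2 and empty centre.

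I would also check that "centre" matches the paper's convention from the Crossing Lemma~\cite[4.1]{TetraDecomp}, where it consists of the vertices and edges of the separators not lying in links. The argument above shows that, in that sense, the centre is empty.
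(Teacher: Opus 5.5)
Your overall strategy is the paper's. The substantive step is exactly the contradiction the paper uses: a corner $K$ with $d(K)\le 3$ would, by \cref{obs:i4c-no-3-edge-cut}, be a single vertex all three of whose edges lie in $S(A,B)\cup S(C,D)$, which the matching-condition forbids. The paper only phrases it contrapositively: a diagonal edge (via \cite[Lemma~1.3.4]{Tridecomp}) or an unbalanced link yields a corner-separator of order $\le 3$. That part of your proposal is correct. To see that $K$, and not its complement, is the singleton, note that the complement contains the three other corners, which are nonempty because the cuts cross.

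The counting step, however, is wrong as written, and the final deduction is asserted rather than derived.
\begin{itemize}
\item A diagonal edge lies in both $S(A,B)$ and $S(C,D)$. It is therefore counted twice in $\sum_K d(K)$ but four times in $2(|S(A,B)|+|S(C,D)|)$. The correct identity is $\sum_K d(K)=16-2c$.
\item In your pair count, the diagonal edges between the other two corners enter with a minus sign, not a plus.
\item Your $L_{AC}$ is the non-diagonal part of the corner-separator, not a link of the crossing-diagram. With your definition, $|L_{AC}|+|L_{AD}|=4-c+2z$ (notation below), and at the end $|L_{AC}|=4$. So ``all four links of size~2'' holds only for links in the paper's sense.
\end{itemize}

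The repair is short. Define:
\begin{itemize}
\item $x,y$: the numbers of edges of $S(A,B)$ with both ends in $C$, resp.\ in $D$;
\item $z,w$: the numbers of edges of $S(C,D)$ with both ends in $A$, resp.\ in $B$;
\item $c_1,c_2$: the numbers of diagonal edges between $A\cap C$ and $B\cap D$, resp.\ between $A\cap D$ and $B\cap C$.
\end{itemize}
The quantities $x,y,z,w$ are the sizes of the four links. With $c=c_1+c_2$ you have $x+y+c=z+w+c=4$, and
\[
d(A\cap C)=x+z+c_1,\quad d(A\cap D)=y+z+c_2,\quad d(B\cap C)=x+w+c_2,\quad d(B\cap D)=y+w+c_1.
\]
Every $d(K)\ge 4$ and these four numbers sum to $16-2c$, so $c=0$ and $d(K)=4$ for all $K$. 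Then $d(A\cap C)=d(A\cap D)$ gives $x=y=2$, and hence $z=w=2$.
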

\begin{proof}
    Recall that $L(X,Y)$ denotes the corner-separator for the corner~$XY$.
    Note that, since~$(A,B)$ and~$(C,D)$ are edge-cuts, their corner-separators are edge-cuts as well.
    
    First assume that the centre is nonempty.
    Say, there is a diagonal edge from the $AD$-corner to the $BC$-corner.
    By \cite[Lemma 1.3.4]{Tridecomp}, either $|L(A,C)| \leq 3$ or $|L(B,D)| \leq 3$, say the former. By \cref{obs:i4c-no-3-edge-cut}, $L(A,C)$ is an atomic 3-edge-cut, contradicting that~$(A,B)$ and~$(C,D)$ satisfy the matching-condition.

    Similarly, we show that every link has size~2.
    Otherwise, there would exist a corner-separator of order~$\leq 3$, say $|L(A,C)| \leq 3$.
    By \cref{obs:i4c-no-3-edge-cut}, $L(A,C)$ is an atomic 3-edge-cut, contradicting that~$(A,B)$ and~$(C,D)$ satisfy the matching-condition.
\end{proof}

\begin{lemma}
\label{lem:transitive-links-of-size-2}
    Let~$G$ be a 3-regular internally-4-connected graph and let $L=L(G)$ be its line graph.
    Let~$(A',B')$ and~$(C',D')$ be two crossing tetra-separations of~$L$.
    Then they cross with all links of size~2 and empty centre.
\end{lemma}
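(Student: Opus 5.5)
The plan is to transfer the two crossing tetra-separations of~$L$ back to~$G$, apply \cref{tetracutCrossing} there, and push the conclusion forward to~$L$ again. Note that the statement here omits vertex-transitivity, while \cref{tetracutBijTetrasep} and \cref{lem:transitive-genuine-seps} are stated under it. I would first check that their proofs never use vertex-transitivity; they appear to use only 3-regularity, internal-4-connectivity and $G \neq K_4$. If some step does need it, I would add vertex-transitivity as a hypothesis, since that is the only setting in which the lemma is applied.

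First, by \cref{tetracutBijTetrasep}, there are tetra-cuts $(A,B)$ and $(C,D)$ of~$G$ whose images are $(A',B')$ and $(C',D')$. By \cref{lem:line-graph-preserves-crossing}, the map $(A,B)\mapsto(A',B')$ preserves and reflects $\le$, and it also commutes with swapping the sides. Hence $(A,B)$ and $(C,D)$ are nested if and only if $(A',B')$ and $(C',D')$ are. So $(A,B)$ and $(C,D)$ cross, and \cref{tetracutCrossing} shows that they cross with all links of size~2 and empty centre.

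Next I would compute the crossing-diagram of $(A',B'),(C',D')$ in terms of that of $(A,B),(C,D)$. Since $E(G)$ is the disjoint union of the edges inside the four corners $A\cap C$, $A\cap D$, $B\cap C$, $B\cap D$ and the four link edge-sets, the description $A'=E(G[A])\cup S(A,B)$ gives the following identifications.
\begin{itemize}
\item The centre $A'\cap B'\cap C'\cap D'$ equals $S(A,B)\cap S(C,D)$, i.e.\ the diagonal edges between opposite corners. This set is empty because the centre of the crossing of $(A,B),(C,D)$ is empty.
\item The link $A'\cap B'\cap (C'\sm D')$ consists of the edges of $S(A,B)$ with both ends in~$C$. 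These are exactly the edges of the $C$-link of $(A,B),(C,D)$, so this link has size~2. The other three links are handled by symmetry.
\end{itemize}

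The main obstacle is bookkeeping. We must make sure that an edge of $S(A,B)$ lying in~$C$ lies in $C'\sm D'$ and not in $C'\cap D'$, i.e.\ that it is not in $S(C,D)$. This holds because an edge of $S(A,B)\cap S(C,D)$ is a diagonal edge, and there are none. We must also make sure that $L$-vertices inside a corner land in the correct corner of $L$; this follows directly from the formula for $A'$ and~$C'$.
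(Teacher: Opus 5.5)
Your proposal is correct and follows the paper's route. The paper's proof is the single line combining \cref{tetracutBijTetrasep} with \cref{tetracutCrossing}; your use of \cref{lem:line-graph-preserves-crossing} to transfer crossing, and your identification of the centre as $S(A,B)\cap S(C,D)$ and of the links as $S(A,B)\cap E(G[C])$ and so on, just make explicit what that line leaves implicit.

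You are also right that vertex-transitivity is never used in the cited lemmas. However, $G\neq K_4$ is genuinely needed, for your argument as for the paper's: $L(K_4)$ is the octahedron, which has two tetra-separations crossing with links of size one and non-empty centre. So it is $G\neq K_4$, not vertex-transitivity, that should be added as a hypothesis.
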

\begin{proof}
    We combine \cref{tetracutBijTetrasep} with \cref{tetracutCrossing}.
\end{proof}

\begin{lemma}
\label{lem:transitive-tutte-bagel-3-conn}
    Let~$G$ be a 3-regular internally-4-connected vertex-transitive finite graph and let $L=L(G)$ be its line graph.
    Let~$(A',B')$ and~$(C',D')$ be two tetra-separations of~$L$ that cross with all links of size~2.
    Let~$\cO=(O,\cG)$ be the Tutte-bagel of~$L$ induced by~$(A',B')$ and~$(C',D')$.
    Then every torso of~$\cO$ is 3-connected and has $\geq 5$ vertices.
\end{lemma}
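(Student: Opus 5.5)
We want to rule out torsos of $\cO$ that are triangles, 4-cycles, or $K_4$'s; any torso of a Tutte-bagel is a triangle, a 4-cycle or 3-connected. The tools are the local structure of $L$ from \cref{lem:transitive-properties-line-graph}: $L$ is 4-regular, every vertex lies in exactly two triangles meeting only in it, and every edge lies in a unique triangle. We also use \cref{lem:transitive-links-of-size-2}, which says the centre of the crossing is empty, and \cref{lem:transitive-genuine-seps}, which says every tetra-separation of $L$ is genuine and thus has only vertices in its separator.

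First, exclude triangle-torsos. Suppose $H_t$ is a triangle with adhesion-sets $\{u,v\}$ and $\{v,w\}$ sharing the tip $v$. By \cite[Lemma~9.4]{TetraDecomp}, the edges $uv$ and $vw$ exist in $L$, and the triangle edges present in the bag force $v$ to be adjacent to $u$ and $w$. Now use the degree count at $v$: $v$ lies in adhesion-sets on both sides. The unique triangle containing $uv$ and the unique triangle containing $vw$ must then either coincide, making $uvw$ a triangle of $L$, or give $v$ too many neighbours. In the first case, the second triangle at $v$ must use two further neighbours lying in neighbouring bags, one on each side. Then the edge between those two neighbours lies in a triangle spanning three bags of the cycle-decomposition. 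That contradicts the bagel structure whenever $|O|\ge 4$, which holds by \cite[Lemma~9.51]{TetraDecomp}.

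Second, exclude 4-cycle torsos. By \cite[Lemma~9.4]{TetraDecomp}, a 4-cycle torso is flanked by 3-connected torsos, and its bag edges come from the cycle. Consider a vertex $x$ of an adhesion-set of the 4-cycle torso. It has one neighbour across the 4-cycle bag, and its remaining neighbours lie in the neighbouring bag. Since every edge of $L$ lies in a triangle, the edge from $x$ across the 4-cycle bag must lie in a triangle. Its third vertex would have to be adjacent to both ends, which lie in different adhesion-sets of the 4-cycle torso, and no such vertex exists outside the bag. Inside the bag the torso is an induced 4-cycle, so there is no such vertex there either. This is a contradiction.

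Third, exclude $K_4$-torsos, that is, bags consisting only of two adhesion-sets. Such a bag together with its neighbours yields a tetra-separation $(U,W)$ of $L$ with a side of size 4 or 5 near that bag. The more robust route is to transfer it via \cref{tetracutBijTetrasep} to a tetra-cut of $G$ and use \cref{tetracutCrossing} and girth $\ge 4$. Concretely, a $K_4$-torso with $V(H_t)=\pi\cup\pi'$ has every vertex of degree 4 in $L$. Each vertex has at most one neighbour in its own adhesion-set and at most two in the other. Combined with the two edge-disjoint triangles at each vertex, this forces the four vertices of $\pi\cup\pi'$ to correspond to four edges of $G$ that form a 4-cycle or create a triangle in $G$. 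A triangle contradicts girth $\ge 4$. In the 4-cycle case, the corresponding edges of $L$ span the bag, and checking the triangles again yields a vertex lying in a third triangle, contradicting \cref{lem:transitive-properties-line-graph}~\ref{itm:transitive-properties-line-graph-0}. I expect this $K_4$ case to be the main obstacle, since the bookkeeping of which triangles of $L$ lie within a bag is delicate. If direct counting fails, I would instead use vertex-transitivity as in \cref{lem:transitive-cube}, comparing the number of induced 4-cycles through adhesion-vertices with that through other vertices.
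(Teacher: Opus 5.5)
\textbf{Gap 1: the triangle-torso step.} You never determine which torsos neighbour the triangle-torso $H_t$. So the claim that the two further neighbours of the tip $v$ lie ``one on each side'' is unsupported. The contradiction you then invoke also does not follow from the bagel structure. Your exact configuration occurs in the cycle of triangle-bags $C_6^2$ with bags $\{i,i+1,i+2\}$: for the torso $\{1,2,3\}$ with tip $2$, the remaining neighbours $0,4$ of the tip lie one on each side, and the edge $04$ sits in the bag $\{4,5,0\}$.

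The missing idea is that both neighbours of $H_t$ are 3-connected. A 4-cycle neighbour is excluded by \cite[Lemma~9.4]{TetraDecomp}. A triangle neighbour $H_{t+1}$ on $\{v,w,y\}$ would put the adhesion edge $vw$ into the two triangles $uvw$ and $vwy$ of $L$, contradicting \cref{lem:transitive-properties-line-graph}~\ref{itm:transitive-properties-line-graph-4}. Since $H_{t\pm1}$ are then 3-connected with disjoint adhesion-sets, $v$ has torso-degree $\ge 3$ in each of them. Hence $v$ has at least two neighbours in $G_{t-1}-\{u,v\}$ and at least two in $G_{t+1}-\{v,w\}$. For $|O|\ge 4$ these share at most one vertex, since a shared vertex lies in both adhesion-sets of every torso other than $H_{t-1},H_t,H_{t+1}$. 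So $\deg_L(v)\ge 5$, contradicting 4-regularity.

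Your 4-cycle step depends on this. ``No such vertex exists outside the bag'' needs every vertex to lie in at most two bags. That fails, e.g.\ for $|O|=4$, when the torso opposite the 4-cycle is a triangle: its tip lies in both bags flanking the 4-cycle.

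The paper avoids this local analysis altogether. The tetra-separations of $L$ are genuine, so the links contain no edges. The Link-Moving Lemma for vertices \cite[9.44~(i)]{TetraDecomp} then turns a triangle- or 4-cycle-torso into a tetra-separation of $L$ with an edge in its separator, contradicting \cref{lem:transitive-genuine-seps}.

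\textbf{Gap 2: the $K_4$-torso step.} This step does not reach a contradiction either. Four edges of $G$ forming a 4-cycle are compatible with girth $\ge 4$ and with every vertex of $L$ lying in exactly two triangles. So no ``third triangle'' arises, and the tetra-cut detour is never carried out.

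The actual argument is short and is the one from your own 4-cycle step. As $H_t=K_4$ has disjoint adhesion-sets $\pi,\pi'$, the bag $G_t$ contains all four $\pi$--$\pi'$ edges.
\begin{itemize}
\item If, say, $\pi$ spans an edge of $L$, that edge lies in two triangles, one through each vertex of $\pi'$. This contradicts \cref{lem:transitive-properties-line-graph}~\ref{itm:transitive-properties-line-graph-4}.
\item Otherwise $\pi\cup\pi'$ induces a 4-cycle in $L$. The triangle through any of its edges then needs a third vertex outside $G_t$ that is adjacent to vertices of both $\pi$ and $\pi'$. This is impossible once all adhesion-sets are disjoint and $|O|\ge 4$, since such a vertex would lie in every bag except $G_t$.
\end{itemize}
This is exactly how the paper concludes, and your vertex-transitivity fallback is not needed.
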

\begin{proof}
    We may assume the Tutte-bagel setting~\cite[9.12]{TetraDecomp} for~$L$, but with $(A',B')$ in place of $(A,B)$ and $(C',D')$ in place of~$(C,D)$.
    
    Assume that there is a torso~$H_s$ in~$\cO$ that is a triangle or 4-cycle.
    Say, $H_s$ stems from the $A'C'$-Tutte-path.
    Since~$(A',B')$ and~$(C',D')$ are genuine separations by \cref{lem:transitive-genuine-seps}, no link contains an edge.
    It follows that all conditions for the Link-Moving Lemma for vertices~\cite[9.44]{TetraDecomp} are fulfilled.
    By the Link-Moving Lemma for vertices~\cite[9.44~(i)]{TetraDecomp}, we get a tetra-separation of~$L$ that contains at least one edge in its separator.
    This contradicts~\cref{lem:transitive-genuine-seps}.

    Therefore, every torso~$H_s$ of~$\cO$ is 3-connected.
    Assume for a contradiction that there is a torso~$H_s$ of~$\cO$ with $\leq 4$ vertices.
    Then $H_s = K_4$.
    By \cref{lem:transitive-properties-line-graph}~\ref{itm:transitive-properties-line-graph-4}, every edge $e \in E(G_s)$ is contained in a unique triangle.
    If some adhesion set of~$s$ induces an edge~$e$, then~$e$ is contained in two triangles.
    Otherwise, the edges in~$G_s$ are not contained in any triangle.
    This is a contradiction.
\end{proof}

Let~$G$ be a graph.
A \defn{cycle-partition} of~$G$ is a pair $\cO=(O,(V_t)_{t \in V(O)})$ consisting of a cycle~$O$ and vertex sets~$V_t$ such that 
\begin{enumerate}
    \item $\{\,V_t \mid t \in V(O)\,\}$ is a partition of~$V(G)$; and
    \item every edge~$e \in E(G)$ either has its ends in a unique $V_t$ or there is an edge $tt' \in E(O)$ such that~$e$ has its ends in~$V_t$ and~$V_{t'}$.
\end{enumerate}
We call the sets $V_t$ the \defn{bags} of~$\cO$.
For every $tt' \in E(O)$, we call $E_G(V_t,V_{t'})$ the \defn{adhesion-set} of~$tt'$.

\begin{lemma}
\label{lem:transitive-cycle-partition}
    Let~$G$ be a 3-regular internally-4-connected vertex-transitive finite graph.
    Assume that~$G$ has two crossing tetra-cuts.
    Then~$G$ admits a cycle-partition~$\cO=(O,(V_t)_{t \in O})$ with $|O| \geq 4$ such that all of the following are satisfied:
    \begin{enumerate}
        \item\label{itm:transitive-cycle-partition-1} All adhesion-sets of~$\cO$ are 2-matchings;
        \item\label{itm:transitive-cycle-partition-2} Every bag of~$\cO$ induces a~$C_4$ or a~$K_2$;
        \item\label{itm:transitive-cycle-partition-3} Let $t \in V(O)$ and let $t_1,t_2$ be its neighbours on~$O$. Let~$U_i$ be the set of endvertices of edges in the adhesion-set for~$tt_i$ in~$V_t$. If~$V_t$ induces a 4-cycle, then this 4-cycle alternates between~$U_1$ and~$U_2$.
    \end{enumerate}
\end{lemma}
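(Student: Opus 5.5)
We transfer the problem to the line graph $L=L(G)$ and use the Tutte-bagel machinery there. By \cref{lem:transitive-properties-line-graph}, $L$ is 4-regular and 4-connected. By \cref{tetracutBijTetrasep} and \cref{lem:line-graph-preserves-crossing}, the two crossing tetra-cuts $(A,B),(C,D)$ of $G$ correspond to two crossing tetra-separations $(A',B'),(C',D')$ of $L$. By \cref{lem:transitive-links-of-size-2}, these cross with all links of size~2 and empty centre. So they induce a Tutte-bagel $\cO'=(O,(G'_t)_t)$ of $L$. By \cref{lem:transitive-tutte-bagel-3-conn}, every torso of $\cO'$ is 3-connected with $\ge 5$ vertices, and by \cref{lem:transitive-genuine-seps} all adhesion-sets of $\cO'$ are genuine vertex-pairs.

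The plan is to translate $\cO'$ back into $G$. Each adhesion-set $\{e,f\}$ of $\cO'$, together with the adhesion-set on the opposite side of the cycle, forms the separator of a tetra-separation of $L$. By \cref{tetracutBijTetrasep}, this separator is the separator of a tetra-cut of $G$, so $e,f$ are disjoint edges of $G$. Moreover, $\{e,f\}$ is not an edge of $L$: an $L$-edge between two adhesion vertices would lie in a unique triangle, whose third vertex would also be forced into the separator, which is impossible. For each $t\in V(O)$, let $V_t$ be the set of vertices $x$ of $G$ whose star $E(x)$ lies in the bag $G'_t$. Every star is a triangle of $L$, and a triangle has no room to meet two bags except in an adhesion vertex, where the degree condition of the tetra-separation splits it. It follows that the $V_t$ partition $V(G)$. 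Each adhesion-set of $\cO'$ consists of exactly the $G$-edges between consecutive $V_t$, which gives~\cref{itm:transitive-cycle-partition-1} and the cycle-partition axioms. The bound $|O|\ge 4$ comes from \cite[Lemma~9.51]{TetraDecomp}.

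The main step is~\cref{itm:transitive-cycle-partition-2}. Here I would mimic \cref{totallynestedTetracut4cycle} and \cref{lem:transitive-cube}, using vertex-transitivity in $G$. Every vertex $x\in V_t$ has all three of its edges among the interior and adhesion vertices of $G'_t$. If some $V_t$ contains a vertex $x$ not incident with any adhesion edge, I would find a tetra-cut separating a strictly smaller piece. Take the cut around $V_t$ and push it inward by left-right-reduction as in the Link-Moving Lemma~\cite[9.44]{TetraDecomp}. Comparing this cut with its image under an automorphism sending an adhesion-endvertex to $x$ should contradict minimality, exactly as in \cref{totallynestedTetracut4cycle}. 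Hence every vertex of $V_t$ meets one of the four adhesion edges at $t$, so $|V_t|\le 4$. Since the torso has $\ge 5$ vertices, we get $|V_t|\ge 2$. Using that $G$ is 3-regular with girth $\ge 4$ (by \cref{lem:transitive-reduction-to-int-4-con}), the case $|V_t|=3$ forces a path whose middle vertex has degree~2 or a triangle, so it is excluded. The case $|V_t|=2$ gives a $K_2$, with each end carrying one edge to each side. The case $|V_t|=4$ gives four vertices, each with two inner edges and no triangle, hence a $C_4$.

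For~\cref{itm:transitive-cycle-partition-3}, two consecutive vertices of the $C_4$ cannot both lie in $U_1$. Otherwise the two edges to $t_1$ together with the two inner edges leaving that pair would form a 4-edge-cut of $G$ cutting off a 2-vertex side (an edge). Its image in $L$ would be a tetra-separation with a side too small. Alternatively, the remaining pair $U_2$ together with $t_1$'s edges gives a 3-separator, contradicting internal 4-connectivity via \cref{obs:i4c-no-3-edge-cut}. I expect the minimality argument bounding $|V_t|\le 4$ to be the main obstacle, since it requires controlling how the automorphism moves the bagel.
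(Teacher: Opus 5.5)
Your set-up (passing to $L$, the Tutte-bagel $\cO'$, defining $V_t$ via stars, and \cref{itm:transitive-cycle-partition-1}) matches the paper. The core step \cref{itm:transitive-cycle-partition-2}, however, has a genuine gap. The orbit-minimality argument of \cref{totallynestedTetracut4cycle} gets its contradiction entirely from the minimal cut $(C,D)$ being totally-nested: only because $(C,D)$ is nested with $\varphi(C,D)$ must $\varphi(C,D)$ or $\varphi(D,C)$ lie strictly below $(C,D)$. You never show that the cut around $V_t$, or a reduction of it, is nested with its automorphic images. This is not automatic when $G$ has crossing tetra-cuts: in the prism $C_\ell\square K_2$ with $\ell\ge 5$, the tetra-cut around two consecutive rungs crosses its rotation by one rung. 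Left-right-reduction does not create nestedness either.

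The missing idea is the good/bad dichotomy for the torsos of~$\cO'$. If $H'_t$ is good, \cite[Key Lemma~9.41]{TetraDecomp} says the tetra-separation around $G'_t$ is totally-nested. Then, by \cref{tetracutBijTetrasep} and \cref{lem:line-graph-preserves-crossing}, $(V_t,V(G)\sm V_t)$ is a totally-nested tetra-cut, and only at this point does \cref{totallynestedTetracut4cycle} give a~$C_4$. If $H'_t$ is bad, it has exactly five vertices by \cref{lem:transitive-tutte-bagel-3-conn}. Then 4-regularity of $L$ and 3-connectivity of the neighbouring torsos force $H'_t$ to be a 4-wheel, so $G'_t$ is two triangles meeting in the interior vertex and $V_t$ induces a~$K_2$.

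Your argument for \cref{itm:transitive-cycle-partition-3} also fails. Write the 4-cycle as $u_1u_1'u_2'u_2u_1$ with $U_i=\{u_i,u_i'\}$. A 4-edge-cut around the edge $u_1u_1'$ exists around every edge of a 3-regular graph. It violates the matching-condition, so it is not a tetra-cut, and its image in $L$ only cuts off the single vertex $u_1u_1'$, which contradicts nothing. No 3-separator arises either.

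In fact no argument using only the local structure of $G$ can work. In the same prism, which satisfies all hypotheses, merge two consecutive rungs into one bag. This yields a cycle-partition with $|O|\ge 4$ satisfying \cref{itm:transitive-cycle-partition-1} and \cref{itm:transitive-cycle-partition-2}, but with a non-alternating $C_4$-bag. So you must use how $\cO$ arises from $\cO'$. The paper notes that a $C_4$-bag comes from a good torso, so $(V_t,V(G)\sm V_t)$ is totally-nested, whereas $(V_{t_1}\cup U_1,V(G)\sm(V_{t_1}\cup U_1))$ is a tetra-cut crossing it. Alternatively, a non-alternating $C_4$ would make the $L$-vertices $u_1u_2$ and $u_1'u_2'$ a 2-separator of the torso $H'_t$, contradicting \cref{lem:transitive-tutte-bagel-3-conn}.
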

\begin{proof}
    Let~$L=L(G)$ be the line graph of~$G$.
    Let~$(A,B) \mapsto (A',B')$ be the bijection from \cref{tetracutBijTetrasep} between tetra-cuts $(A,B)$ in~$G$ and tetra-separations $(A',B')$ in~$L$.
    By \cref{lem:line-graph-preserves-crossing},~$(A,B)$ is a totally-nested tetra-cut of~$G$ if and only if~$(A',B')$ is a totally-nested tetra-separation of~$L$.

    Let~$(A,B)$ and $(C,D)$ be two crossing tetra-cuts in~$G$.
    By \cref{lem:line-graph-preserves-crossing} and \cref{lem:transitive-links-of-size-2}, the tetra-separations $(A',B')$ and $(C',D')$ cross with links of size~2 and empty centre.
    Let~$\cO'=(O',(G'_t)_{t \in V(O')})$ be the Tutte-bagel of~$L$ induced by~$(A',B')$ and~$(C',D')$.
    We denote by~$H'_t$ the torso of~$t \in V(O')$ in~$\cO'$.
    By \cref{lem:transitive-tutte-bagel-3-conn}, every torso of~$\cO'$ is 3-connected on $\geq 5$ vertices.

    Recall that $E(v)$ denotes the set of edges incident with the vertex~$v$ in~$G$.
    We define~$\cO=(O,(V_t)_{t \in V(O)})$ with $O:=O'$ and $V_t := \{v \in V(G) \mid E(v) \subseteq V_t\}$.
    We show that~$\cO$ is a cycle-partition of~$G$.
    First, note that for every $v \in V(G)$, the set~$E(v)$ induces a triangle in the graph~$L$, which in turn is contained in a unique torso~$H'_t$ of~$\cO'$.
    Hence, $\{\,V_t \mid t \in V(O)\,\}$ is a partition of~$V(G)$.
    Second, let~$e$ be an edge in~$G$.
    Then~$e$ is a vertex in~$L$ which is contained in two triangles $\Delta_1(e)$ and $\Delta_2(e)$ in~$L$.
    If $\Delta_1(e)$ and $\Delta_2(e)$ are contained in the same torso of~$\cO'$, then both endvertices of~$e$ are contained in the same bag of~$\cO$.
    Otherwise, since every torso of~$\cO'$ is 3-connected the adhesion-sets of $\cO'$ are pairwise disjoint by~\cite[Lemma~9.4]{TetraDecomp}, so $\Delta_1(e)$ and $\Delta_2(e)$ are contained in two torsos of~$\cO'$, whose corresponding nodes $t_1, t_2 \in V(O')$ are neighbours on~$O'$.
    Then, the endvertices of~$e$ are contained in $V_{t_1}$ and $V_{t_2}$, where $t_1 t_2 \in E(O)$.
    This shows that~$\cO$ is a cycle-partition of~$G$.

    It remains to show that~\ref{itm:transitive-cycle-partition-1}--\ref{itm:transitive-cycle-partition-3} are satisfied for the cycle-partition~$\cO$ of~$G$.

    \ref{itm:transitive-cycle-partition-1}. 
    Let~$tt' \in E(O)$.
    We show that the adhesion-set of~$tt'$ is a 2-matching.
    First, note that the adhesion-set of~$tt'$ in~$\cO$ is equal to the adhesion-set of~$tt'$ in~$\cO'$.
    Since the latter consists of two vertices of~$L$, the former consists of two edges of~$G$.
    Since every torso of~$\cO'$ is 3-connected and~$L$ is 4-regular, it follows that the adhesion-set of~$tt'$ in~$\cO'$ is independent.
    Hence, the adhesion-set of~$tt'$ in~$\cO$ is a matching.
    
    \ref{itm:transitive-cycle-partition-2}.
    We show the following strengthening: 
    If the torso~$H'_t$ of~$\cO'$ is good, then~$V_t$ induces a~$C_4$ in~$G$, and otherwise~$V_t$ induces a~$K_2$ in~$G$.
    First, assume that~$H'_t$ is good.
    Then by~\cite[Key Lemma~9.41]{TetraDecomp}, $(U_t,W_t)=(U'_t,W'_t)$ as defined in the Tutte-bagel setting~\cite[9.12]{TetraDecomp} is a totally-nested tetra-separation.
    By \cref{lem:transitive-separation-to-cut} and \cref{lem:line-graph-preserves-crossing},~$(V_t, V(G) \sm V_t)$ is a totally-nested tetra-cut.
    It follows that~$V_t$ induces a~$C_4$ by \cref{totallynestedTetracut4cycle}.

    Now assume that~$H'_t$ is bad.
    Then~$V(H'_t)$ consists of exactly five vertices.
    Let~$v$ be the vertex in the interior of~$H'_t$.
    By 4-regularity of~$L$, the vertex~$v$ is adjacent to every vertex in~$H'_t - v$.
    Since every torso of~$\cO'$ is 3-connected, every vertex~$u$ in an adhesion-set of~$H'_t$ has degree~2 in~$G'_t$.
    It follows that the torso~$H'_t$ is a 4-wheel, that~$G'_t$ is the union of two triangles that meet exactly in~$v$, and that~$V_t$ induces a~$K_2$ in~$G$.
    
    \ref{itm:transitive-cycle-partition-3}.
    We denote the vertices in~$U_i$ by~$u_i$ and~$u_i'$.
    Assume for contradiction that~$V_t$ induces a 4-cycle~$Z$ that does not alternate between~$U_1$ and~$U_2$.
    As shown in the proof of \ref{itm:transitive-cycle-partition-2}, the tetra-cut $(V_t, V(G) \sm V_t)$ is totally-nested.
    Then without loss of generality $Z = u_1 u_1' u_2' u_2 u_1$.
    Then, however, $(V_{t_1} \cup U_1, V(G) \sm (V_{t_1} \cup U_1))$ is a tetra-cut that crosses $(V_t, V(G) \sm V_t)$, a contradiction.
\end{proof}

\begin{keylemma}\label{CrossingTetraCutOutcome}
    Let~$G$ be a 3-regular internally-4-connected vertex-transitive graph.
    Assume that~$G$ has two crossing tetra-cuts.
    Then~$G$ is a cycle of~$C_4$-bags of length~$\geq 4$, or a cycle alternating between~$K_{2,2}$-bags and $C_4$-torsos of length~$\geq 8$.
\end{keylemma}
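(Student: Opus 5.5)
We start from the cycle-partition $\cO=(O,(V_t)_{t\in V(O)})$ provided by \cref{lem:transitive-cycle-partition}. Every bag induces a $C_4$ or a $K_2$, every adhesion-set is a 2-matching, and every $C_4$-bag alternates between its two attachment sets $U_1,U_2$. Since $G$ is 3-regular, this already fixes the whole graph up to the pattern of bag types along $O$ and the way consecutive matchings are glued. Vertex-transitivity should then force that pattern to be constant or alternating. Throughout we use the strengthening from the proof of \cref{lem:transitive-cycle-partition}: $V_t$ is a $C_4$ exactly when the torso $H'_t$ of the Tutte-bagel of $L(G)$ is good, and in that case $(V_t,V(G)\sm V_t)$ is a totally-nested tetra-cut.

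\textbf{Step 1: local structure.} In a $K_2$-bag $V_t=\{x,y\}$ the edge $xy$ is inside the bag. By 3-regularity each of $x,y$ sends exactly one edge to each neighbouring bag, so both attachment sets equal $\{x,y\}$. In a $C_4$-bag each vertex has two neighbours inside the bag and one outside, and the outside edges are split two and two between the neighbouring bags, in alternating fashion.

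\textbf{Step 2: the pattern along $O$.} If all bags are $C_4$'s, then $G$ is a cycle of $C_4$-bags in the sense of \cref{dfn:transitive-cycles}. To see this, merge each adhesion 2-matching into its endpoints; the resulting adhesion-sets of size~2 induce disjoint $K_2$'s after a suitable regrouping. We should double-check this against \cref{fig:CycleC4sNormal}, and possibly re-index the decomposition so that bags are the $C_4$'s formed by two matching edges plus two bag edges. The length is $|O|\ge 4$.

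If all bags are $K_2$'s, then $G$ is a prism or Möbius ladder over $C_{|O|}$. This is again a cycle of $C_4$-bags (the $4$-cycles formed by consecutive rungs), so it is covered by the first outcome.

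In the mixed case, vertex-transitivity has to be used. A vertex in a $K_2$-bag lies in 4-cycles through its rung. Such a 4-cycle uses a neighbouring bag only if that bag is also a $K_2$; if the neighbouring bag is a $C_4$, the two matching edges hit non-adjacent vertices of that $C_4$ by alternation. So we compare the number of 4-cycles through a vertex, or the number of 4-cycles through each of its edges, in the two kinds of bag. We expect this to show that two $K_2$-bags are never adjacent and two $C_4$-bags are never adjacent, so the bags alternate between $K_2$'s and $C_4$'s.

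\textbf{Step 3: identify the alternating graph.} Once the bags alternate, we group each $K_2$-bag with its two incident matchings. The result is a cycle-decomposition with independent adhesion-sets of size~2: the bags are the four endpoints of the two matchings on the $C_4$-sides, forming $K_{2,2}$'s, and their torsos are $K_4$'s; the $C_4$-bags have $C_4$-torsos. This is exactly a cycle alternating between $K_{2,2}$-bags and $C_4$-torsos. For the length bound $\ge 8$, \cref{lem:transitive-tutte-bagel-3-conn} together with \cite[Lemma~9.51]{TetraDecomp} gives enough bags; we would try to show directly that short alternating cycles (length $4$ or $6$) are either not internally-4-connected or fail vertex-transitivity.

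\textbf{Main obstacle.} The hard part is Step 2 in the mixed case. The local 4-cycle counts depend on whether the two matchings attached to a $K_2$-bag twist, so ruling out adjacent $C_4$-bags next to $K_2$-bags needs care. If counting 4-cycles is not enough, the fallback is the totally-nested structure. Good torsos give totally-nested tetra-cuts around $C_4$-bags, and an automorphism mapping a vertex in a $K_2$-bag to one in a $C_4$-bag must carry this canonical nested system to itself. We would then derive a contradiction from the orbit-minimality argument of \cref{totallynestedTetracut4cycle}.
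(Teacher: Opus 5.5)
There is a genuine gap: you have the roles of the two bag types reversed. Your all-$K_2$ case is fine (prism or Möbius ladder, i.e.\ a cycle of $C_4$-bags). The all-$C_4$ case, however, is \emph{not} a cycle of $C_4$-bags. By property (iii) of \cref{lem:transitive-cycle-partition}, each $C_4$-bag $V_t$ is a $K_{2,2}$ between its attachment sets $U_{t,1}$ and $U_{t,2}$, and these sets are independent. So adding the two adhesion pairs gives a $K_4$-torso, not a $C_4$-torso. Directly: in this graph every vertex lies on exactly one 4-cycle (its bag), whereas in a cycle of $C_4$-bags every vertex lies on at least two. No regrouping can therefore help.

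The correct identification is that this case \emph{is} the second outcome. Take as bags alternately the sets $V_t$ (the $K_{2,2}$-bags) and the four endpoints of each adhesion 2-matching (bag a 2-matching, torso a $C_4$). This cycle-decomposition has length $2|O|\ge 8$, so the length bound is immediate from $|O|\ge 4$.

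Consequently the mixed case has to be excluded outright rather than shown to alternate. Your Step 3 does not give a valid decomposition: the four outer endpoints of the two matchings at a $K_2$-bag span no edges, and the $K_2$ itself is lost. Moreover, the alternating $K_2$/$C_4$ pattern you expect is not even vertex-transitive. By exactly your own observation, a vertex of a $K_2$-bag flanked by two $C_4$-bags lies on no 4-cycle, while every vertex of a $C_4$-bag lies on one.

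The paper rules out the mixed case by looking at a maximal run of consecutive $K_2$-bags:
\begin{itemize}
\item A run of length $\ge 3$ gives a middle vertex on two 4-cycles, versus exactly one for a vertex of the $C_4$-bag ending the run.
\item A run of length $1$ dies by cycle counts as above (the paper counts 5-cycles there).
\item A run of length exactly $2$ flanked by $C_4$-bags is the genuinely delicate case. There both the vertices of these $K_2$-bags and those of the $C_4$-bags lie on exactly one 4-cycle, so 4-cycle counts cannot distinguish them. The paper uses a finer invariant: a vertex $u$ of such a $K_2$-bag lies on exactly two 5-cycles, whose intersection is a path with three edges having $u$ as an inner vertex. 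The neighbour of $u$ in the adjacent $C_4$-bag lies on both of these 5-cycles but is an end of that path.
\end{itemize}
Your proposal contains no argument for this last configuration, and the fallback via totally-nested tetra-cuts is too vague to replace it.
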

\begin{proof}
    By \cref{lem:transitive-cycle-partition},~$G$ admits a cycle-partition~$\cO=(O, (V_t)_{t \in O})$ with $|O| \geq 4$ such that properties~\ref{itm:transitive-cycle-partition-1}--\ref{itm:transitive-cycle-partition-3} from \cref{lem:transitive-cycle-partition} are fulfilled.
    Let $O=:t_1 t_2 \ldots t_\ell t_1$.
    For $t=t_i \in V(O)$ we denote by~$U_{t,1}$ the set of endvertices of the edges in the adhesion-set for~$tt_{i-1}$ that lie in~$V_t$; and by~$U_{t,2}$ we denote the set of endvertices of the edges in the adhesion-set for~$tt_{i+1}$ that lie in~$V_t$.
    
    We show that either every bag~$V_t$ of~$\cO$ induces a~$K_2$ or every bag~$V_t$ of~$\cO$ induces a~$C_4$ alternating between~$U_{t,1}$ and~$U_{t,2}$. In the former case, it follows that~$G$ is a cycle of~$C_4$-bags of length~$\geq 4$, while in the latter case it follows that~$G$ is a cycle alternating between $K_{2,2}$-bags and~$C_4$-torsos of length~$\geq 8$.

    Assume for a contradiction that there is a bag of~$\cO$ that induces a~$K_2$, and that there is a bag of~$\cO$ that induces a~$C_4$.
    Then, without loss of generality, one of the following cases occurs.

    \casen{The bags for $t_1$, $t_2$ and $t_3$ induce a~$K_2$ while the bag for~$t_4$ induces a~$C_4$.} Then every vertex in~$V_{t_2}$ is contained in two~$C_4$'s of~$G$ while every vertex in~$V_{t_4}$ is contained in exactly one~$C_4$ of~$G$, contradicting vertex-transitivity.

    \casen{The bags for~$t_2$ and~$t_3$ induce a~$K_2$ while the bags for~$t_1$ and~$t_4$ induce a~$C_4$.}
    Let~$u \in V_{t_2}$.
    Then $u$ is contained in exactly two 5-cycles $Z,Z'$. The intersection $Z\cap Z'$ is a path with three edges, and $u$ is an internal vertex of this path. The neighbour of $u$ in $V_{t_1}$ also lies on $Z$ and $Z'$, but it is not an internal vertex of $Z\cap Z'$, contradicting vertex-transitivity.

    \casen{The bag for~$t_2$ induces a~$K_2$ while the bags for~$t_1$ and~$t_3$ induce a~$C_4$.}
    Then every vertex in~$V_{t_2}$ is contained in exactly four 5-cycles of~$G$, while every vertex in~$V_{t_1} \cap N(V_{t_2})$ is contained in at most three 5-cycles.
    This contradicts vertex-transitivity.
\end{proof}

Recall that a graph~$G$ is \defn{2-quasi-5-connected} if~$G$ is quasi-4-connected and every 4-separation~$(A,B)$ of~$G$ satisfies $|A \sm B| \leq 2$ or $|B \sm A| \leq 2$.

\begin{lemma}
\label{lem:2-quasi-5-con}
    Let~$G$ be a 3-regular quasi-4-connected graph.
    Assume that $G$ has no tetra-cut.
    Then $G$ is 2-quasi-5-connected.
\end{lemma}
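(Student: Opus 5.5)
Assume for a contradiction that $G$ is not 2-quasi-5-connected. Then $G$ has a 4-separation $(A,B)$ with $|A\sm B|\ge 3$ and $|B\sm A|\ge 3$. The plan is to turn $(A,B)$ into a 4-edge-cut satisfying the matching-condition, i.e.\ a tetra-cut, which contradicts the assumption.

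\textbf{Step 1: reduce to the internally-4-connected case.} By \cref{lem:transitive-reduction-to-int-4-con}, either $G$ is internally-4-connected with girth $\ge 4$, or $G=K_3\square K_2$. The graph $K_3\square K_2$ has only six vertices, so no 4-separation can have both sides of size $\ge 3$. Hence we may assume that $G$ is internally-4-connected and has girth $\ge 4$.

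\textbf{Step 2: convert separator vertices into edges.} Let $x\in S(A,B)=A\cap B$. Since $G$ is 3-regular, $x$ has at most one neighbour on one of the two sides, say at most one in $A\sm B$. Move $x$ into $B\sm A$ and put the edge from $x$ to its neighbour in $A\sm B$ (if there is one) into the separator.

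\begin{itemize}
\item If $x$ has no neighbour in $A\sm B$, the move lowers the order, giving a $(\le 3)$-separation whose sides still have size $\ge 3$.
\item If $x$ has a neighbour there, the order stays 4 and the number of vertices in the separator drops.
\end{itemize}

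Iterating, and always moving towards the side where $x$ has at most one neighbour, we obtain a mixed separation $(A^*,B^*)$ of order $\le 4$ whose separator consists only of edges, i.e.\ an edge-cut. Its sides keep size $\ge 3$ (after moves they only grow, except that the original $A\sm B$ and $B\sm A$ are never touched). If the order fell to $\le 3$, then \cref{obs:i4c-no-3-edge-cut} says the cut is an atomic 3-edge-cut. An atomic cut has a side of size 1, which is impossible here.

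\textbf{Step 3: the matching-condition.} So $(A^*,B^*)$ is a 4-edge-cut with both sides of size $\ge 3$. Suppose two of its edges share an endvertex $a\in A^*$. If $a$ sends all three of its edges across, replace $a$ by its three neighbours to get a smaller separation. If $a$ sends exactly two edges across, then moving $a$ to $B^*$ yields a 3-edge-cut whose sides are still of size $\ge 2$ (since $|A^*|\ge 3$). This again contradicts \cref{obs:i4c-no-3-edge-cut} unless that side becomes a single vertex, which it does not. Hence the edges of the cut form a matching, so $(A^*,B^*)$ is a tetra-cut, contradicting the assumption.

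\textbf{Main obstacle.} I expect the main obstacle to be the bookkeeping in Step 2. The moving process must keep both sides large, and it must also handle separator vertices adjacent to one another, where moving one vertex changes the neighbour counts of another. I would handle this by choosing $(A,B)$ with $|A\cap B|$ minimal among all counterexample 4-separations. I would then argue the moves one vertex at a time, checking each resulting mixed separation against internal-4-connectivity.
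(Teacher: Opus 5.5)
Your proof is correct. It has the same skeleton as the paper's proof: turn the separator vertices of the bad 4-separation into edges, then show that the resulting 4-edge-cut satisfies the matching-condition. The difference is in how you certify the two key facts, namely that the order is exactly~4 and that the cut is a matching.

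\textbf{The paper's route.} It works directly from quasi-4-connectivity.
\begin{enumerate}
\item It notes that $(A,B)$ must be tight. By 3-regularity, every separator vertex then has exactly one neighbour in $A\sm B$, or exactly one neighbour in $B\sm A$ and none in $A\cap B$.
\item So the left-right-reduction replaces each separator vertex by exactly one edge, and the order stays~4 automatically.
\item The matching-condition reduces to showing that two separator vertices $v_i,v_j$ cannot share their unique neighbour $u$ in $A\sm B$. This is done by exhibiting the 3-separation $((A\cup\{u\})\sm\{v_i,v_j\},B\cup\{u\})$, whose strict sides both have size $\ge 2$.
\end{enumerate}

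\textbf{Your route.}
\begin{enumerate}
\item You first pass to the internally-4-connected case via \cref{lem:transitive-reduction-to-int-4-con}. You never use the girth part of that lemma.
\item You convert separator vertices greedily, keeping only the weak invariant that the order does not increase and the strict sides do not shrink.
\item You then check a posteriori, both times via \cref{obs:i4c-no-3-edge-cut}, that the order did not drop and that the cut edges form a matching. For the matching-condition, moving a shared endvertex across produces a $(\le 3)$-edge-cut with two sides of size $\ge 2$.
\end{enumerate}

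Your approach costs two earlier lemmas but avoids tightness and the case analysis of neighbourhood patterns. The paper's is self-contained and uses only the definition of quasi-4-connectivity.

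\textbf{Two small remarks.}
\begin{enumerate}
\item The obstacle you anticipate in Step~2 does not arise. At every stage a separator vertex has at most three neighbours, so it has at most one in one of the current strict sides. Hence the moves can be done in any order, adjacent separator vertices cause no trouble, and you do not need to choose $(A,B)$ with $|A\cap B|$ minimal.
\item In Step~3, the case where $a$ sends all three edges across is better phrased as moving $a$ to $B^*$. This leaves a $1$-edge-cut with both sides nonempty, which \cref{obs:i4c-no-3-edge-cut} (or 3-connectivity) excludes.
\end{enumerate}
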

\begin{proof}
    If~$G$ is not 2-quasi-5-connected, it has a 4-separation~$(A,B)$ with $|A \sm B| \geq 3$ and $|B \sm A| \geq 3$.
    Since~$G$ is quasi-4-connected, it follows that~$(A,B)$ is tight.
    Let~$S(A,B)=\{v_1,v_2,v_3,v_4\}$.

    First we show that if~$v_i$ only has one neighbour~$u_i$ in $A \sm B$ and~$v_j \neq v_i$ only has one neighbour~$u_j$ in $A \sm B$, then $u_i \neq u_j$.
    Indeed, otherwise $(A',B'):=((A\cup\{u_i\})\sm\{v_i,v_j\}, B \cup \{u_i\})$ is a 3-separation of~$G$ with $|A\sm B| \geq 2$ and $|B \sm A| \geq 2$, contradicting that~$G$ is quasi-4-connected.
    By the above argument, it follows that the left-right-reduction of~$(A,B)$ satisfies the matching-condition.
    
    Since~$G$ is 3-regular and~$(A,B)$ is tight, every vertex~$v_i \in S(A,B)$ either has exactly one neighbour in $A \sm B$, or~$v_i$ has exactly one neighbour in~$B \sm A$ and no neighbour in $A \cap B$.
    Therefore, the left-right-reduction of~$(A,B)$ is a 4-edge-cut and hence a tetra-cut of~$G$.
\end{proof}

\begin{example}
    The toroidal hex-grid on a torus shows that 2-quasi-5-connected in \cref{lem:2-quasi-5-con} is best possible.
\end{example}

\begin{lemma}\label{unique4cyclePartition}
    Let~$G$ be a 3-regular internally-4-connected vertex-transitive graph such that every tetra-cut of~$G$ is totally-nested.
    Then either~$G$ is 2-quasi-5-connected or there is a canonical partition~$\cP=\{\,A_i:i\in I\,\}$ of~$V(G)$ such that
    \begin{enumerate}
        \item each~$A_i$ induces a~$C_4$;
        \item the edges in~$G$ with ends in different classes of~$\cP$ form a perfect matching in~$G$.
    \end{enumerate}
\end{lemma}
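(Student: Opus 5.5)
The plan mirrors the proof of \cref{lem:transitive-partition-into-K4}, with \cref{totallynestedTetracut4cycle} in place of \cref{lem:transitive-totally-nested}. Suppose $G$ is not 2-quasi-5-connected. Since internally-4-connected graphs are quasi-4-connected, \cref{lem:2-quasi-5-con} gives a tetra-cut $(A,B)$ of~$G$. By assumption it is totally-nested, so \cref{totallynestedTetracut4cycle} gives, after swapping sides, $G[A]=C_4$. Hence some vertex $u$ lies in an induced 4-cycle $Z$ that is the side of a tetra-cut. By vertex-transitivity, every vertex of $G$ lies in such a 4-cycle, namely in the image of $Z$ under a suitable automorphism. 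I would then define $\cP$ as the set of vertex-sets of all 4-cycles of $G$ that are sides of tetra-cuts. This definition uses only the tetra-cut structure, so $\cP$ is invariant under $\Aut(G)$, and hence canonical once it is shown to be a partition.

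The key step, and the main obstacle, is to show that two distinct such 4-cycles $Z_1,Z_2$ are disjoint. Let $(V(Z_i),V(G)\sm V(Z_i))$ be the corresponding tetra-cuts; both are totally-nested, so they are nested. Nestedness of two edge-cuts with sides of size~4 forces either $V(Z_1)\se V(Z_2)$ (so $Z_1=Z_2$, since both have four vertices), or $V(Z_1)\cap V(Z_2)=\emptyset$, or $V(Z_1)\cup V(Z_2)=V(G)$. In the last case $|G|\le 8$, and I would dispose of it directly: such a $G$ would have to be the cube-like graph $C_4\square K_2$, which has girth~4 and crossing tetra-cuts (for instance the other two 4-cycle pairs), contradicting total nestedness.

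If nestedness is not immediately usable, the fallback is a direct argument. Suppose $Z_1,Z_2$ share a vertex $x$. Since $G$ is 3-regular and the cut $\partial V(Z_i)$ satisfies the matching-condition, each vertex of $Z_i$ has exactly one neighbour outside $Z_i$. From this one checks that $Z_1\cap Z_2$ is a path of length~1 or~2, and then produces a tetra-cut, built from $V(Z_1)\cup V(Z_2)$ or from a corner, that crosses $(V(Z_1),\ldots)$. This contradicts total nestedness. The case analysis here, in particular verifying the matching-condition for the constructed cut using internal 4-connectivity and \cref{obs:i4c-no-3-edge-cut}, is where I expect the real work.

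Once $\cP$ is shown to be a partition into vertex-sets of induced $C_4$'s, property~(i) holds. For property~(ii), each vertex has degree~3, with two neighbours in its own class, because the class induces a $C_4$. So each vertex has exactly one edge leaving its class, and the inter-class edges therefore form a perfect matching.
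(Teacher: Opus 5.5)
Your proposal is correct and is essentially the paper's proof. Total nestedness makes the tetra-cuts with a $C_4$-side pairwise nested (the paper phrases this as their forming a star), so their $C_4$-sides are pairwise disjoint; \cref{totallynestedTetracut4cycle} together with vertex-transitivity makes them cover $V(G)$; and 3-regularity turns the cross-edges into a perfect matching. The one inaccuracy is in the degenerate case $V(Z_1)\cup V(Z_2)=V(G)$, where neither your identification of $G$ nor the fallback argument is needed. An overlap is impossible there: the complementary side $B$ of a tetra-cut in a 3-regular graph satisfies $3|B|-2|E(G[B])|=4$, so $|B|$ is even, and $|B|=2$ violates the matching-condition. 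The disjoint 8-vertex case need not be the cube (it could also be the life belt), but it already falls under the disjoint case.
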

\begin{proof}
    If $G$ has no tetra-cut, then $G$ is 2-quasi-5-connected by \cref{lem:2-quasi-5-con}.
    So we may assume that $G$ has at least one tetra-cut.
    Let $\sigma:=\{\,(A_i,B_i):i\in I\,\}$ be the set of all tetra-cuts of $G$ whose side $A_i$ induces a 4-cycle.
    Since all tetra-cuts of $G$ are totally-nested, $\sigma$ is a star of tetra-cuts.
    Hence the sets $A_i$ are pairwise disjoint.
    Since $G$ is vertex-transitive, we have $\bigcup_{i\in I}A_i=V(G)$.
    Therefore, $\cP:=\{\,A_i:i\in I\,\}$ is a canonical partition of $G$ into vertex-sets that induce 4-cycles.

    Let $M$ denote the set of all edges in $G$ with ends in different classes of~$\cP$.
    Every vertex $v$ of $G$ lies in exactly one $A_i$, and since $G$ is 3-regular, it follows that $v$ is incident to exactly one edge in $M$.
    Hence $M$ is a perfect matching.
\end{proof}

\begin{keylemma}\label{allTetraCutsTotallyNested}
    Let~$G$ be a 3-regular internally-4-connected vertex-transitive graph.
    Assume that every tetra-cut in~$G$ is totally-nested.
    Then~$G$ is one of the following:
    \begin{itemize}
        \item a cycle alternating between $K_{2,2}$-bags and $C_4$-torsos of length~6;
        \item one of the three $C_4$-expansions of the line graph of the cube shown in \cref{fig:LineGraphCube};
        \item a vertex-transitive $C_4$-expansion of a cycle of $K_{2,2}$-bags of length~$\ge 3$;
        \item a $C_4$-expansion of a quasi-5-connected 4-regular arc-transitive graph;
        \item 2-quasi-5-connected.
    \end{itemize}
\end{keylemma}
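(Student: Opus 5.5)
By \cref{unique4cyclePartition}, either $G$ is 2-quasi-5-connected, which is the last outcome, or $G$ has a canonical partition $\cP=\{A_i : i\in I\}$ into vertex-sets of induced 4-cycles whose cross edges form a perfect matching. In the second case every vertex sends exactly one edge out of its class, so every class $A_i$ sends exactly four edges out. The plan is to contract $\cP$, writing $H:=G/\cP$, and to split into cases according to the multiplicities of edges between classes. This mirrors the proof of \cref{lem:transitive-contract-essentially-5-connected}.

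\casen{Case 1:} some two classes $P_1\neq P_2$ are joined by at least three edges. Internal 4-connectivity of $G$ (via \cref{obs:i4c-no-3-edge-cut}) should force all four edges to go between $P_1$ and $P_2$. Then $G$ has 8 vertices and is a small explicit graph. This case should be excluded or absorbed into one of the listed outcomes by inspection. The likely conclusion is that it is not internally-4-connected, or not vertex-transitive, or is impossible because the tetra-cut $(P_1,V\sm P_1)$ is then not totally-nested or lies in the wrong form.

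\casen{Case 2:} some two classes $P_1,P_2$ are joined by exactly two edges. Then $(P_1\cup P_2, V(G)\sm(P_1\cup P_2))$ is a 4-edge-cut; I would check that it satisfies the matching-condition, so that it is a tetra-cut. It is totally-nested by assumption, so \cref{totallynestedTetracut4cycle} gives that one side induces a $C_4$. Since $|P_1\cup P_2|=8$, the other side is a single class $P_3$. The resulting graph has 12 vertices: three $C_4$'s, with $P_1$--$P_2$ doubled and $P_3$ joined to each of $P_1,P_2$ by two edges. By vertex-transitivity every pair of adjacent classes is joined by exactly two edges. By \cref{lem:transitive-cycle-partition}-type reasoning on how the 4-cycles alternate, this should be the cycle alternating between $K_{2,2}$-bags and $C_4$-torsos of length~6.

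\casen{Case 3:} all cross multiplicities are at most one. Then $H$ is a simple 4-regular graph and $G$ is a $C_4$-expansion of $H$. Because $\cP$ is canonical, $\Aut(G)$ acts on $\cP$ and induces automorphisms of $H$. I would argue that $H$ is arc-transitive as in the proof of \cref{lem:expansion-edge-transitive}. A vertex $x$ of $G$ in class $P$ determines the arc $(P,Q)$ given by its matching edge, and an automorphism of $G$ sending $x$ to $x'$ maps the corresponding arcs onto each other. By \cref{4reg4con}, $H$ is 4-connected, so we may apply the 4-connected analysis to $H$. If $H$ has two crossing tetra-separations, \cref{cor:symmetric-crossing} gives that $H$ is the line graph of the cube or a cycle of $K_{2,2}$-bags of length~$\ge3$. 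Otherwise \cref{cor:transitive-essentially-5-connected} and \cref{cor:symmetric-contract-essentially-5-connected} give that $H$ is quasi-5-connected.

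The main obstacle is the line-graph-of-the-cube subcase. Here one must show that exactly the three $C_4$-expansions of \cref{fig:LineGraphCube} are vertex-transitive, which requires enumerating how the four matching edges can be attached to each 4-cycle up to symmetry and discarding the non-transitive choices by counting short cycles through vertices, as in \cref{CrossingTetraCutOutcome}. For the cycle of $K_{2,2}$-bags the statement only asks for ``vertex-transitive $C_4$-expansion'', so no enumeration is needed there. I would also double-check that the tetra-cuts arising in Cases 1 and 2 satisfy the matching-condition; this should follow since each vertex has only one cross edge.
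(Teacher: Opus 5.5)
Your plan is essentially the paper's proof. The paper fixes one class $U$ and splits by how the four edges leaving $U$ are distributed among the neighbouring classes, which is your trichotomy. It treats Case~3 exactly as you do, and it too only asserts, without enumerating, that the three depicted $C_4$-expansions are the only vertex-transitive $C_4$-expansions of the line graph of the cube.

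Your two hedged steps close as in the paper:
\begin{enumerate}
\item In Case~1, $G$ is two induced 4-cycles joined by a perfect matching, hence the cube or the Wagner graph $V_8$. Both have two crossing tetra-cuts, so this case contradicts the hypothesis. Alternatively, any quasi-4-connected graph on 8 vertices is trivially 2-quasi-5-connected, since $|A\sm B|+|B\sm A|=4$.
\item In Case~2, suppose the 4-cycle on $P_1$ traverses its two $P_2$-neighbours $u,u'$ consecutively. Then $(P_2\cup\{u,u'\},\,P_3\cup(P_1\sm\{u,u'\}))$ is a tetra-cut crossing $(P_1,V(G)\sm P_1)$. Hence all three classes alternate, which gives the length-6 cycle alternating between $K_{2,2}$-bags and $C_4$-torsos.
\end{enumerate}
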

\begin{proof}
    Assume that $G$ is not 2-quasi-5-connected.
    Then $V(G)$ has a canonical partition $\cP$ into vertex-sets that induce 4-cycles by \cref{unique4cyclePartition}.
    Fix $U\in\cP$ and let $\{\,U_i:i\in I\,\}$ be the set of classes $U_i\in\cP$ that contain neighbours of~$U$.
    Let $M$ denote the set of edges of $G$ with ends in different classes of~$\cP$, so $M$ is a perfect matching by \cref{unique4cyclePartition}.
    Let $F\se M$ denote the set of edges leaving~$U$.

    \casen{Case: all four edges in $F$ end in the same~$U_i$.}
    Then $F=M$ and so $V(G)=U\cup U_i$.
    Both $U$ and $U_i$ induce 4-cycles, so either $G=C_4\square K_2$ which is a cycle of $C_4$-bags of length~4, or $G$ is the life belt.
    However, in both cases $G$ has two crossing tetra-cuts, so neither case occurs.

    \casen{Case: exactly three edges in $F$ end in the same~$U_i$.}
    This case is impossible: Then the vertex in $U$ and the vertex in $U_i$ that are not incident with these three edges form a 2-separator of~$G$, contradicting that $G$ is internally-4-connected.

    \casen{Case: there is a $U_i$ in which exactly two edges from $F$ end.}
    Then four edges from $M$ leave $U\cup U_i$.
    Hence $U\cup U_i$ is a side of a tetra-cut of~$G$.
    Since this tetra-cut is totally-nested by assumption, it has a side that induces a 4-cycle by \cref{totallynestedTetracut4cycle}.
    This side can only be $V(G)\sm (U\cup U_i)$, which then must be a $U_j$ for some $j\in I\sm\{i\}$.
    Now $U$ and $U_i$ each send exactly two edges to~$U_j$.
    If $G$ is a cycle alternating between $K_{2,2}$-bags and $C_4$-torsos of length~6, then we are done.
    Otherwise the 4-cycle $O$ induced by~$U$, say, first traverses the two neighbours $u_i,u_i'$ of $U_i$ in~$U$, and then traverses the two neighbours $u_j,u_j'$ of $U_j$ in~$U$.
    But then $(U,V(G-U))$ and $(U_i\cup \{u_i,u_i'\},U_j\cup\{u_j,u_j'\})$ are two crossing tetra-cuts in~$G$, contradicting our assumption.

    \casen{Case: $|I|=4$ and exactly one edge from $F$ ends in each~$U_i$.}
    Then $G/\cP$ is 4-regular and 4-connected.
    Moreover, $G/\cP$ is arc-transitive, as $\cP$ is unique and every vertex of $G$ is incident with a unique edge in~$F$.
    Then by \cref{cor:transitive-essentially-5-connected,cor:symmetric-crossing}, the graph $G/\cP$ is one of the following:
    \begin{itemize}
        \item the line-graph of the cube,
        \item a cycle of $K_{2,2}$-bags of length~$\ge 3$, or
        \item essentially-5-connected.
    \end{itemize}
    Assume first that $G/\cP$ is the line-graph of the cube, so $G$ is some $C_4$-expansion thereof.
    Since $G$ is vertex-transitive, there are only three possible $C_4$-expansions: these are precisely the three depicted in \cref{fig:LineGraphCube}.

    Assume finally that $G/\cP$ is essentially-5-connected.
    Then $G/\cP$ is quasi-5-connected by \cref{cor:symmetric-contract-essentially-5-connected}.
    Hence $G$ is the $C_4$-expansion of a quasi-5-connected 4-regular arc-transitive graph.
\end{proof}

\begin{proposition}\label{3regCase:transitive}
    A 3-regular quasi-4-connected finite graph $G$ is vertex-transitive if and only if $G$ is either
    \begin{enumerate}
        \item\label{3regCase:2Q5C} 2-quasi-5-connected and vertex-transitive,
        \item\label{3regCase:C4expQ5C} a vertex-transitive $C_4$-expansion of a quasi-5-connected 4-regular arc-transitive graph,
        \item\label{3regCase:C4bags} a cycle of~$C_4$-bags of length~$\geq 4$ or $K_3\square K_2$,
        \item\label{3regCase:K22C4} a cycle alternating between~$K_{2,2}$-bags and $C_4$-torsos of length~$\ge 6$,
        \item\label{3regCase:C4LineCube} one of the three $C_4$-expansions of the line-graph of the cube shown in \cref{fig:LineGraphCube}, or
        \item\label{3regCase:C4expK22rest} a vertex-transitive $C_4$-expansion of a cycle of $K_{2,2}$-bags of length~$\ge 3$.
    \end{enumerate}
\end{proposition}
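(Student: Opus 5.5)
The proof will mirror that of \cref{4conCase:transitive} and consists of a forward direction assembled from the Key Lemmas above and a backward direction obtained by checking each listed class.

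For ($\Rightarrow$), let $G$ be 3-regular, quasi-4-connected and vertex-transitive. First apply \cref{lem:transitive-reduction-to-int-4-con}: either $G=K_3\square K_2$, which is outcome \cref{3regCase:C4bags}, or $G$ is internally-4-connected with girth $\ge 4$. In the latter case, split according to whether $G$ has two crossing tetra-cuts.
\begin{itemize}
\item If it does, \cref{CrossingTetraCutOutcome} makes $G$ a cycle of $C_4$-bags of length $\ge 4$ or a cycle alternating between $K_{2,2}$-bags and $C_4$-torsos of length $\ge 8$. These are outcomes \cref{3regCase:C4bags} and \cref{3regCase:K22C4}.
\item Otherwise every tetra-cut is totally-nested, and \cref{allTetraCutsTotallyNested} gives the remaining list: length~$6$ in \cref{3regCase:K22C4}, the three cube-line-graph expansions in \cref{3regCase:C4LineCube}, the $K_{2,2}$-cycle expansions in \cref{3regCase:C4expK22rest}, the $C_4$-expansions of quasi-5-connected arc-transitive graphs in \cref{3regCase:C4expQ5C} (vertex-transitive since $G$ is), or 2-quasi-5-connected, which is \cref{3regCase:2Q5C}.
\end{itemize}
This direction is routine bookkeeping. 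The only point to check is that the outcomes of the Key Lemmas are 3-regular quasi-4-connected graphs, but this is automatic since $G$ itself is.

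For ($\Leftarrow$), items \cref{3regCase:2Q5C}, \cref{3regCase:C4expQ5C} and \cref{3regCase:C4expK22rest} are vertex-transitive by hypothesis. For \cref{3regCase:C4LineCube}, use that the three depicted expansions are exactly those arising in the proof of \cref{allTetraCutsTotallyNested}, and check their vertex-transitivity directly from the figures.

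For cycles of $C_4$-bags and $K_3\square K_2$, exhibit automorphisms explicitly. Cycle rotations, the reflection of the cycle, and the involution swapping the two ends of each adhesion-edge simultaneously in every bag are automorphisms. Together they act transitively, since every vertex lies in a bag and the bag-stabiliser swaps the two adhesion-sides and each matching edge.

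For cycles alternating between $K_{2,2}$-bags and $C_4$-torsos, the same construction works: rotate by two bags and reflect to move between the $K_{2,2}$-bags, then use local swaps inside a bag. Each $K_{2,2}$-bag is an alternating 4-cycle between $U_1$ and $U_2$, and swapping within $U_1$ extends along the cycle compatibly.

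The main obstacle is the length-parity and twist issue in these cycle constructions. Depending on how the matchings are glued around the cycle (twisted or untwisted, as in the two pictures of \cref{fig:CycleC4sNormal}), a naive rotation may fail to be an automorphism. I would handle this by defining the automorphism bag-by-bag, propagating the local swaps along the cycle, and checking that the monodromy after a full turn is consistent. Should it be inconsistent, I would compose with the global twist, which commutes with the local swaps.
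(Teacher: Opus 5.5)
Your proposal is correct and is essentially the paper's proof: the forward direction is exactly its chain \cref{lem:transitive-reduction-to-int-4-con}, then crossing versus totally-nested tetra-cuts via \cref{CrossingTetraCutOutcome} and \cref{allTetraCutsTotallyNested}, and your backward direction spells out what the paper calls straightforward (the twist worry is moot, since mapping each rung to the next via the bag's matching is always an automorphism, M\"obius ladder included). The only gap, shared with the paper, is $K_4$: it is 3-regular, quasi-4-connected and vertex-transitive but has girth~3 and is not $K_3\square K_2$, so it escapes \cref{lem:transitive-reduction-to-int-4-con} as stated; this is harmless, because $K_4$ has no 4-separations and thus lies in item~(i).
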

\begin{proof}
    ($\Rightarrow$).
    By \cref{lem:transitive-reduction-to-int-4-con}, we may assume that $G$ is internally 4-connected and has girth~$\ge 4$.
    If $G$ has two crossing tetra-cuts, then we are done by \cref{CrossingTetraCutOutcome}.
    Otherwise every tetra-cut of $G$ is totally-nested.
    Then we are done by \cref{allTetraCutsTotallyNested}.

    ($\Leftarrow$) is straightforward.
\end{proof}

\begin{proof}[Proof of \cref{mainthm:transitive}]
    We combine \cref{4conCase:transitive} and \cref{3regCase:transitive}.
\end{proof}

\section{Analysing 3-regular arc-transitivity}\label{sec:4regArcTrans}

\begin{lemma}
\label{lem:C4-expansion-not-edge-transitive}
    Let~$k \leq 5$ and let~$G$ be a $C_k$-expansion of a $k$-regular graph.
    Then~$G$ is not edge-transitive.
\end{lemma}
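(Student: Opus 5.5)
Let $H$ be the underlying $k$-regular graph and let $G$ be a $C_k$-expansion of $H$. The edges of $G$ split into two kinds. The \emph{cycle-edges} are those inside the copies $H_v\cong C_k$. The \emph{matching-edges} are the edges $\hat e$ for $e\in E(H)$, which form a perfect matching $M$ of $G$. Every vertex of $G$ is incident with exactly two cycle-edges and one matching-edge, so $G$ is 3-regular. The plan is to exhibit an isomorphism-invariant of edges that separates some cycle-edge from some matching-edge. Then no automorphism can map the one to the other, and $G$ is not edge-transitive. For $k\ge 3$ these graphs have girth at most $k\le 5$, so the natural invariant to use is the number of short cycles through an edge.

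We count, for each edge $f$, the cycles of length $k$ through $f$; if needed, we also count the cycles of length at most $5$ through $f$. Each cycle-edge lies on the $k$-cycle $H_v$. Next we consider a matching-edge $\hat e$ with $e=uv$, and a cycle $Z$ of length $\ell\le 5$ through $\hat e$. The cycle $Z$ uses matching-edges and alternates between copies $H_x$. If we contract every $H_x$ to a point, $Z$ projects to a closed walk in $H$ that traverses the edge $e$. The walk crosses $e$ an even number of times in total (counting its steps modulo backtracking). Leaving $H_v$ and returning costs at least two matching-edges. Within a copy, passing between the two endpoints of matching-edges costs at least one cycle-edge, because matching-edges share no ends. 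A short counting argument should then show the following. Either $Z$ projects onto a cycle of $H$ of length $m$, in which case $\ell\ge 2m\ge 6$; or $Z$ goes back and forth along a pair of parallel routes between $H_u$ and $H_v$, which is impossible because $H$ is simple. Hence for $k\le 5$ no cycle of length at most $5$ contains a matching-edge, while every cycle-edge lies on a $k$-cycle. In fact it is cleanest to compare girths: the matching-edges lie on no cycle of length $\le 5$, and the cycle-edges do.

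The main obstacle is carrying out the counting precisely. The projected walk of $Z$ in $H$ has some length $m$. Each of its steps costs one matching-edge, and each intermediate copy it visits costs at least one cycle-edge, so $\ell\ge 2m$. A closed walk that traverses $e$ in a simple graph and does not merely backtrack has $m\ge 3$, which gives $\ell\ge 6>5$. The remaining case is that the walk backtracks, say $u\to v\to u$. Then $Z$ would use the edge $\hat e$ twice, or it would use two distinct $H_u$--$H_v$ edges. The first is impossible since $Z$ is a cycle, and the second is impossible since $\hat e$ is the unique edge between $H_u$ and $H_v$. Longer walks with repeated vertices still have $m\ge 3$, so the bound $\ell\ge 6$ covers them as well. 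For $k\in\{1,2\}$, $C_k$ is not a simple cycle graph, so I take $k\ge 3$ as implicit; I would note this and argue only for $3\le k\le 5$.
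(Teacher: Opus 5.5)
Your proof is correct and follows the same route as the paper: cycle-edges lie on the $k$-cycle $H_v$ with $k\le 5$, while matching-edges lie on no cycle of length $\le 5$. Your count supplies the justification the paper leaves implicit, namely $\ell\ge 2m$ together with $m\ge 3$, which holds because distinct matching-edges project to a closed trail in the simple graph $H$. The aside that the projected walk crosses $e$ an even number of times is false (a walk around a triangle uses each edge once), but your final argument does not use it.
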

\begin{proof}
    Let~$H$ be a $k$-regular graph and let~$G$ be a $C_k$-expansion of~$H$.
    Let~$\cP$ be the corresponding partition of $V(G)$.
    Then every edge induced by some $P \in \cP$ is contained in a $C_k$, while every edge not induced by any $P \in\cP$ is not contained in a $C_\ell$ for $\ell \leq 5$.
    So, $G$ is not edge-transitive.
\end{proof}

\begin{lemma}\label{lem:3-regular-arc-transitive}
     A 3-regular finite graph~$G$ is quasi-4-connected arc-transitive if and only if~$G$ is the cube or~$G$ is a 2-quasi-5-connected arc-transitive graph.
\end{lemma}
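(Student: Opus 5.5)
The plan is to prove ($\Leftarrow$) directly and to derive ($\Rightarrow$) from \cref{3regCase:transitive} by discarding every non-2-quasi-5-connected outcome except the cube.

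For ($\Leftarrow$), a 2-quasi-5-connected graph is quasi-4-connected by definition. The cube is arc-transitive and 3-connected. It is quasi-4-connected because every 3-separation isolates a single vertex: for the 3-cube, $\kappa=3$ and the only minimum vertex cuts are neighbourhoods of vertices. This can be checked directly, or by observing that the cube is a cycle of $C_4$-bags of length~4, which is quasi-4-connected by \cref{3regCase:transitive}.

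For ($\Rightarrow$), let $G$ be 3-regular, quasi-4-connected and arc-transitive. Then $G$ is vertex-transitive, so it falls into one of the outcomes \cref{3regCase:2Q5C}--\cref{3regCase:C4expK22rest} of \cref{3regCase:transitive}. In outcome \cref{3regCase:2Q5C} we are done.

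Every remaining outcome except $K_3\square K_2$ and the cycles of $C_4$-bags is a $C_4$-expansion of a 4-regular graph: outcomes \cref{3regCase:C4expQ5C}, \cref{3regCase:C4LineCube} and \cref{3regCase:C4expK22rest}. By \cref{lem:C4-expansion-not-edge-transitive} with $k=4$, none of these is edge-transitive, hence none is arc-transitive.

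Outcome \cref{3regCase:K22C4} needs its own argument. A cycle alternating between $K_{2,2}$-bags and $C_4$-torsos is also a $C_4$-expansion of a 4-regular graph, namely of the cycle of $K_{2,2}$-bags of half the length. Each $C_4$-torso together with its adjacent matching edges plays the role of the expanded vertex. If that identification is correct, \cref{lem:C4-expansion-not-edge-transitive} applies again. If not, I would use the same counting directly: edges inside a 4-cycle of the partition lie on a 4-cycle, while the matching edges between classes lie on no 4- or 5-cycle. This uses that the girth conditions hold there, which follows because the contracted graph is simple and 4-connected.

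The graph $K_3\square K_2$ is not edge-transitive, since its triangle edges lie in triangles and its rung edges do not.

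It remains to handle cycles of $C_4$-bags of length $\ell\ge4$, which is the main step. Such a graph is $C_4\square C_\ell$-like: there are two variants, untwisted ($C_\ell\square K_2$ prisms with bag edges) or twisted. For $\ell=4$ in the untwisted case this is the cube; the twisted variant of length~4 should be checked separately, and it is presumably not the cube. For $\ell\ge5$, I would show that $G$ is not edge-transitive by counting 4-cycles through an edge. Each bag 4-cycle is induced, and for $\ell\ge5$ the edges between consecutive bags lie on exactly one 4-cycle (the bag edge square), while edges inside bags... this needs care. More precisely, the edges of $G$ split into rungs (the $K_2$ inside each adhesion) and connecting edges. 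A rung lies on two 4-cycles and a connecting edge on one, whereas in the cube every edge lies on two 4-cycles. This distinguishes the case $\ell\ge5$, and also the twisted case, where the Möbius-type twist destroys one 4-cycle near the twist. The expected obstacle is carrying out these cycle counts correctly for the twisted variants and for small $\ell$. The conclusion is that $G$ is the cube.
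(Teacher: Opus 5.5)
Your overall route is the paper's: reduce to the list of 3-regular vertex-transitive quasi-4-connected graphs, remove the $C_4$-expansions of 4-regular graphs with \cref{lem:C4-expansion-not-edge-transitive}, and remove the remaining families by counting short cycles through edges. (The paper quotes \cref{mainthm:transitive} and discards the $(\ge 4)$-regular outcomes, which amounts to \cref{3regCase:transitive}.) Two cases do not go through as you wrote them.

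\textbf{Cycles alternating between $K_{2,2}$-bags and $C_4$-torsos.} Your reduction of this case fails. Write the adhesion-sets along the cycle as $\pi_0,\dots,\pi_{2m-1}$, which are independent and pairwise disjoint, with $2m\ge 6$. Then $\pi_{2i}\cup\pi_{2i+1}$ is a $K_{2,2}$-bag, and there is a perfect matching between $\pi_{2i+1}$ and $\pi_{2i+2}$. So $|G|=4m$, whereas a $C_4$-expansion of the cycle of $K_{2,2}$-bags of length $m$ has $8m$ vertices, and your identification is false. The vertex-sets of the $C_4$-torsos induce only $2K_2$ in $G$, not 4-cycles. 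The partition of $V(G)$ into 4-cycles is the one into $K_{2,2}$-bags, and contracting it gives the $m$-cycle with every edge doubled. This is not a simple graph, so \cref{lem:C4-expansion-not-edge-transitive} does not apply, and your fallback justification (``the contracted graph is simple and 4-connected'') is false. The fallback claim itself is true and is exactly the paper's argument, but you have to check it directly. Take a matching edge $xy$ with $x\in\pi_{2i+1}$ and $y\in\pi_{2i+2}$. Then $N(x)\setminus\{y\}=\pi_{2i}$ and $N(y)\setminus\{x\}=\pi_{2i+3}$. All edges of $G$ join consecutive adhesion-sets and $2m\ge 6$, so there is no $\pi_{2i}$--$\pi_{2i+3}$ edge. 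Hence $xy$ lies on no 4-cycle, while every edge of a $K_{2,2}$-bag lies on one.

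\textbf{Cycles of $C_4$-bags.} Your explanation for the twisted variants is wrong: the twist destroys no 4-cycle. In both the prism and the Möbius ladder, all $\ell$ bag 4-cycles are present. They are the only 4-cycles unless the rails close up into two 4-cycles, and this happens only in the untwisted case $\ell=4$, the cube. In every other case, rungs lie on two 4-cycles and rail edges on exactly one, so $G$ is not edge-transitive. This includes the Möbius ladder with four rungs (the Wagner graph), which is a cycle of $C_4$-bags of length~4 and which you left open. The paper's proof also passes over this graph when it writes that a cycle of $C_4$-bags of length~4 ``is the cube'', so flagging it was right, but the count above is what settles it.

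\textbf{The cube in ($\Leftarrow$).} Do not cite \cref{3regCase:transitive} for its quasi-4-connectivity, since that proposition presupposes it. Your direct check suffices.
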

\begin{proof}
    ($\Rightarrow$).
    Since~$G$ is arc-transitive and has no isolated vertices, it is vertex-transitive (and additionally edge-transitive).
    So, we can apply \cref{mainthm:transitive}.
    Since~$G$ is 3-regular, we can exclude the graphs from \cref{mainthm:transitive}~\ref{mainitm:transitive-1}--\ref{mainitm:transitive-7} which are $(\geq 4)$-regular.
    By \cref{lem:C4-expansion-not-edge-transitive}, $G$ cannot be the $C_4$-expansion of a 4-regular graph.
    Hence, $G$ is one of the following (with the numbering from \cref{mainthm:transitive}):
    \begin{enumerate}
        \item[\ref{mainitm:3reg2q5c}] a 3-regular 2-quasi-5-connected arc-transitive graph, or
        \item[\ref{mainitm:transitive-K22C4}] a cycle alternating between $K_{2,2}$-bags and $C_4$-torsos of length~$\geq 6$,
        \item[\ref{mainitm:transitive-C4}] a cycle of $C_4$-bags of length~$\ge 4$ or $K_3\square K_2$.
    \end{enumerate}
    
    The graph $G$ cannot be a cycle alternating between $K_{2,2}$-bags and $C_4$-torsos of length~$\geq 6$, since it is not edge-transitive (some edges are contained in 4-cycles while others are not).
    Similarly,~$G$ cannot be a cycle of $C_4$-bags of length~$\ge 5$ (some edges are contained in two 4-cycles while others are not).
    Moreover,~$G \neq K_3\square K_2$ (some edges are contained in triangles while others are not).

    Hence, $G$ is a cycle of $C_4$-bags of length~$4$ (which is the cube), or~$G$ is a 3-regular 2-quasi-5-connected arc-transitive graph.

    ($\Leftarrow$) is trivial.
\end{proof}

\begin{proof}[{Proof of \cref{mainCor}}]
    We combine \cref{mainthm:transitive} with \cref{intro:triVxCon} and \cref{lem:3-regular-arc-transitive}.
\end{proof}

\section{Appendix: Analysing 3-connectivity}\label{sec:3Con}

For the short proof, we use the terminology of~\cite{TriAiC,Tridecomp}.

\begin{proposition}\label{tri-transitive}\cite[Corollary~2]{Tridecomp}.
    Every vertex-transitive finite connected $G$ is either
    \begin{itemize}
        \item 3-connected and every non-trivial strong tri-separation of $G$ is a 3-edge-cut with a side that induces a triangle,
        \item a cycle, $K_2$ or $K_1$.
    \end{itemize}
\end{proposition}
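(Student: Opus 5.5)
This is a citation of \cite[Corollary~2]{Tridecomp}, so the plan is to rederive it from the tri-decomposition of \cite{TriAiC,Tridecomp}, using only its canonicity and the explicit description of its bags. First I dispose of the low-connectivity cases. If $G$ is disconnected there is nothing to do, as $G$ is connected by hypothesis. If $G$ has a cutvertex, the block-cut tree is canonical, and a leaf block contains a non-cutvertex. Vertex-transitivity then forces every vertex to be a non-cutvertex, which is a contradiction. So $G$ is $K_1$, $K_2$ or $2$-connected.

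If $G$ is $2$-connected but not $3$-connected, I would use the Tutte-decomposition, which is $\Aut(G)$-invariant. Consider a leaf part of the decomposition tree, or, if the tree is a single node, $G$ itself. Its interior vertices, which lie in no $2$-separator of the decomposition, are mapped by automorphisms to the interiors of leaf parts. Hence every vertex lies in no adhesion set, so the decomposition is trivial and $G$ is a single cycle, a single $K_2$, or $3$-connected. In the first two cases we obtain a cycle or $K_2$.

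Now let $G$ be $3$-connected, and let $(A,B)$ be a non-trivial strong tri-separation. I would copy the argument of \cref{lem:transitive-totally-nested}. Let $M$ be the union of the $\Aut(G)$-orbits of $(A,B)$ and $(B,A)$, and pick a $\le$-minimal $(C,D)\in M$. If the separator $S(C,D)$ contained a vertex $u$, I would map $u$ to a vertex of $C\sm D$. By nestedness of the orbit, which I must justify, this yields an element of $M$ strictly below $(C,D)$, a contradiction. So $S(C,D)$ consists of three edges. The same trick shows that every vertex of $C$ is incident with an edge of $S(C,D)$. Hence $|C|\le 3$, and since non-trivial strong tri-separations satisfy the matching condition and degrees are at least $3$, the set $C$ induces a triangle. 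Transferring these properties back to $(A,B)$ gives the statement.

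The main obstacle is the nestedness needed in the minimality step: $(A,B)$ need not be totally nested. To handle this I would invoke the structure theorem from \cite{Tridecomp}. If two non-trivial strong tri-separations cross, they induce a cyclic (bagel) decomposition. Vertex-transitivity together with the Tutte-type analysis of that bagel, in the style of \cref{lem:transitive-tutte-bagel}, then forces every bag to be a triangle with edge-separators, which still fits the first outcome. Otherwise all such separations are totally nested, and the minimality argument above applies directly.
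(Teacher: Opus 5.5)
The paper gives no proof of this statement; it imports it from \cite[Corollary~2]{Tridecomp}, so your proposal has to stand on its own. Your reduction for connectivity $\le 2$ (block-cut tree, Tutte-decomposition) is fine. So is the minimality argument borrowed from \cref{lem:transitive-totally-nested}, provided the $\Aut(G)$-orbit of $(A,B)$ is pairwise nested. There is a genuine gap in the crossing case, and that case is where the content of the statement lies.

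\emph{Your target in the crossing case is impossible.} A tri-separation that is a 3-edge-cut with a triangle side is totally nested. The paper uses this in the appendix via \cite[{}1.3.10]{Tridecomp}, and it is also elementary. Every vertex of such a triangle carries exactly one cut-edge. So two such triangles in crossing position cannot share one vertex, since that vertex would carry two cut-edges of the same cut. Nor can they share two vertices: the remaining two triangle vertices would then be their only other neighbours, giving a 2-separator. Hence if two non-trivial strong tri-separations cross, they already violate the first outcome. ``Every bag is a triangle with edge-separators, which still fits the first outcome'' therefore cannot close this case. What you must prove is that a vertex-transitive 3-connected graph has no crossing pair at all, and the proposal contains no argument for that.

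\emph{The structure you plan to analyse is the wrong one.} Two links plus the centre form a separator of order~3, and by the crossing lemma for tri-separations all links have equal size. So either the links are empty and the centre $Z$ consists of three vertices (thickened $K_{3,m}$), or the links and the centre all have size one (wheel-like). The wheel case has adhesion-sets of size one, like a block-bagel. There is no Tutte-bagel, and the empty-link case is not addressed at all. The relevant templates are therefore \cref{lem:transitive-K4m} and \cref{lem:transitive-block-bagel,lem:transitive-double-wheel}, not \cref{lem:transitive-tutte-bagel}.

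For example, in the empty-link case $G-Z$ has at least four components, each adjacent to all of $Z$ by 3-connectivity, so the vertices of $Z$ have degree $\ge 4$. If every component is a single vertex, those vertices have degree~$3$, contradicting vertex-transitivity. Otherwise a component with $\ge 2$ vertices should yield a totally-nested tri-separation. Your minimality argument turns it into a 3-edge-cut with a triangle side, which forces $G$ to be 3-regular, again a contradiction. The wheel case needs an analogous argument. Supplying these two case analyses, including the tri-analogues of the total-nestedness lemmas used in \cref{sec:vertexTransitive}, is what is missing.
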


\begin{proposition}\label{tri-transitive-better}
    Every vertex-transitive finite connected graph $G$ is either
    \begin{itemize}
        \item internally 4-connected and vertex-transitive,
        \item the $K_3$-expansion of a quasi-4-connected 4-regular arc-transitive graph,
        \item a cycle, $K_3\square K_2$, $K_2$ or $K_1$.
    \end{itemize}
\end{proposition}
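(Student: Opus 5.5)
We derive this from \cref{tri-transitive} by splitting into the case without non-trivial strong tri-separations and the case with one. We read the second outcome as the $K_3$-expansion of a quasi-4-connected \emph{3}-regular arc-transitive graph, since a $K_3$-expansion is only defined for 3-regular graphs. Throughout, let $G$ be vertex-transitive, finite and connected. By \cref{tri-transitive}, either $G$ is a cycle, $K_2$ or $K_1$, and we are done, or $G$ is 3-connected and every non-trivial strong tri-separation of $G$ is a 3-edge-cut with a side inducing a triangle.

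\emph{Case A: $G$ has no non-trivial strong tri-separation.} We would use the terminology of \cite{TriAiC,Tridecomp} to conclude that $G$ is quasi-4-connected. The point is that a genuine 3-separation with both sides of size $\ge 2$ can be tightened to a non-trivial strong tri-separation. We then upgrade quasi-4-connectivity to internal 4-connectivity.
\begin{itemize}
\item If $G$ is $d$-regular with $d\ge 4$, then \cref{GodsilResult} gives $k=\lceil\tfrac23(d+1)\rceil\ge 4$. So $G$ is 4-connected and in particular internally 4-connected.
\item If $G$ is 3-regular, the Key Lemma~\ref{lem:transitive-reduction-to-int-4-con} shows that $G$ is internally 4-connected or $G=K_3\square K_2$.
\end{itemize}
Both outcomes are allowed in the statement.

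\emph{Case B: $G$ has a non-trivial strong tri-separation.} By \cref{tri-transitive}, it is a 3-edge-cut $(A,B)$ with $G[A]$ a triangle. Each vertex of $A$ has degree $\ge 3$ by 3-connectivity. The cut has only three edges, and 3-connectivity forces them to have distinct ends in $A$. Hence each vertex $u\in A$ has degree exactly $3$ and $G[N(u)]=K_2\sqcup K_1$.

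By vertex-transitivity, $G$ is 3-regular and every vertex lies in exactly one triangle. So the triangles partition $V(G)$ into a partition $\cP$, and the edges between distinct classes of $\cP$ form a perfect matching. Next we rule out two triangles joined by more than one edge:
\begin{itemize}
\item If two triangles are joined by two edges, the two remaining matching edges give a 2-separator, unless $G$ is small.
\item If they are joined by three edges, then $G=K_3\square K_2$.
\end{itemize}
With these excluded, $G$ is the $K_3$-expansion of the 3-regular graph $H:=G/\cP$, and $H$ is arc-transitive by \cref{lem:expansion-edge-transitive}. By \cref{GodsilResult} with $d=3$ we get $k=\lceil 8/3\rceil=3$, so $H$ is 3-connected. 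For quasi-4-connectivity, take a 3-separation of $H$ with both sides of size $\ge 2$. After moving separator vertices to one side, it yields a 3-edge-cut of $H$ whose sides each contain $\ge 2$ vertices. We lift this edge-cut to $G$ by replacing each vertex of $H$ with its triangle. The lift is a 3-edge-cut of $G$ with both sides of size $\ge 6$, and hence (after checking the matching condition) a non-trivial strong tri-separation without a triangle side. This contradicts \cref{tri-transitive}.

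The main obstacle is the terminology bridge. We must verify that a ``non-trivial strong tri-separation'' in the sense of \cite{Tridecomp} is produced exactly by the separations used above. Concretely, we need: in Case~A, that any 3-separation with both sides $\ge 2$ yields one; and in Case~B, that the lifted edge-cut qualifies. I would first try to cite the lemma in \cite{TriAiC} stating that a 3-connected graph without non-trivial strong tri-separations is quasi-4-connected (possibly up to small exceptions such as $K_4$, which is itself internally 4-connected). I would then check the lifted edge-cut directly against the definition.
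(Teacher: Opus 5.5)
Your proof is correct, and your reading of the second outcome as ``3-regular'' is right: the paper's own proof produces a 3-regular $G/\cP$, so ``4-regular'' in the statement is a typo.

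\textbf{Comparison with the paper.} The skeleton is the same as the paper's:
\begin{itemize}
\item apply \cref{tri-transitive} and split on whether a non-trivial strong tri-separation exists;
\item partition $V(G)$ into triangles and exclude $K_3\square K_2$;
\item get arc-transitivity of $G/\cP$ from \cref{lem:expansion-edge-transitive};
\item get quasi-4-connectivity by lifting a non-atomic 3-edge-cut of $G/\cP$ to $G$.
\end{itemize}
Several sub-steps take a different route.
\begin{itemize}
\item \emph{Case~A.} The paper cites \cite[{}1.2.8]{Tridecomp}, which gives internal 4-connectivity directly. Your upgrade via \cref{GodsilResult} and \cref{lem:transitive-reduction-to-int-4-con} is therefore unnecessary. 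It is still valid, and it would work even if the citation only gave quasi-4-connectivity.
\item \emph{Triangle partition.} The paper uses that 3-edge-cut tri-separations are totally-nested \cite[{}1.3.10]{Tridecomp}, so the triangle sides form a star. Your local argument ($G[N(u)]=K_2\sqcup K_1$ plus vertex-transitivity) is more elementary and mirrors \cref{lem:transitive-partition-into-K4}.
\item \emph{Connectivity of $G/\cP$.} The paper first shows that $G/\cP$ has no non-atomic 3-edge-cut. It then applies \cref{tri-transitive} to $G/\cP$ again, together with \cite[{}1.2.8]{Tridecomp}. You instead use Godsil--Royle for 3-connectivity and convert a bad 3-separation into a non-atomic 3-edge-cut by hand. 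This avoids a second use of the tri-decomposition machinery.
\end{itemize}
Your ``terminology bridge'' is exactly the point where the paper also relies on citation or bare assertion. So nothing more is expected of you there.

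\textbf{Two details to tighten.}
\begin{itemize}
\item To get $G[N(u)]=K_2\sqcup K_1$, you also need the three cut edges to have distinct ends in $B$. If two of them share an end $b$, then $b$ together with the third vertex of the triangle is a 2-separator. If all three share $b$, then $b$ is a cut vertex or $G=K_4$.
\item ``Moving separator vertices to one side'' should be made precise. By 3-connectivity, each separator vertex has a neighbour in both $A\sm B$ and $B\sm A$. Hence it has at most one neighbour inside the separator. Put each separator vertex on the side containing two of its neighbours, keeping an adjacent pair together. This gives exactly three cut edges, and both sides keep at least two vertices. The lifted cut edges form a matching in $G$ automatically, since they lie in the perfect matching of cross-edges.
\end{itemize}
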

\begin{proof}
    By \cref{tri-transitive}, we may assume that $G$ is 3-connected and every non-trivial strong tri-separation of $G$ is a 3-edge-cut with a side that induces a triangle.
    If $G$ has no non-trivial strong tri-separation, then $G$ is internally-4-connected by \cite[{}1.2.8]{Tridecomp}.
    Hence we may assume that $G$ has at least one non-trivial strong tri-separation, and by assumption it has a side that induces a triangle.
    
    Let $\{\,(A_i,B_i):i\in I\,\}=\sigma$ be the set of all non-trivial strong tri-separations of~$G$ with $G[A_i]=K_3$.
    Since each $S(A_i,B_i)$ is a 3-edge-cut, the tri-separations $(A_i,B_i)$ are totally-nested by \cite[{}1.3.10]{Tridecomp}, so $\sigma$ is a star of tri-separations.
    Hence the $A_i$ are pairwise disjoint, and every vertex of $G$ is contained in some $A_i$ by vertex-transitivity, so $\cP:=\{\,A_i:i\in I\,\}$ is a partition of~$V(G)$.
    
    Since each $S(A_i,B_i)$ is a 3-matching, the set $M$ of cross-edges of~$\cP$ is a perfect matching.
    If two edges in $M$ join $A_i$ to $A_j$ for some $i\neq j\in I$, then the two vertices in $A_i\cup A_j$ that are not incident to these edges form a 2-separator of~$G$ contradicting 3-connectivity, unless $G=K_3\square K_2$.
    Hence we may assume that at most one edge in $M$ joins any $A_i$ to any $A_j$, so $G/\cP$ is 3-regular.

    Since $G$ is vertex-transitive and the $K_3$-expansion of $G/\cP$, it follows that $G/\cP$ is arc-transitive by \cref{lem:expansion-edge-transitive}.
    In particular, $G/\cP$ is vertex-transitive, and since $G/\cP$ is 3-regular we get that $G/\cP$ is 3-connected by \cref{tri-transitive}.
    However, $G/\cP$ has no non-atomic 3-edge-cut, as this would yield a non-trivial strong tri-separation of $G$ that is a 3-edge-cut but whose sides have size $\ge 9$, contradicting our assumption.
    Hence applying \cref{tri-transitive} to $G/\cP$ in combination with \cite[{}1.2.8]{Tridecomp} yields that $G/\cP$ is quasi-4-connected.
\end{proof}

\begin{proof}[Proof of \cref{intro:triVxCon}]
    \cref{tri-transitive-better} implies \cref{intro:triVxCon}.
\end{proof}

\begin{acknowledgment}
We thank Agelos Georgakopoulos for telling us about \cref{GodsilResult} of Godsil and Royle.
\end{acknowledgment}

\bibliographystyle{amsplain}
\bibliography{collective}
\end{document}